\newtheorem{theorem}{Theorem}[section]
\newtheorem{lemma}[theorem]{Lemma}
\newtheorem{corollary}[theorem]{Corollary}
\theoremstyle{definition}
\newtheorem{definition}[theorem]{Definition}
\theoremstyle{remark}
\numberwithin{equation}{section}
\begin{document}
	\title [The Multidimensional QPFT: Theoretical Analysis and Applications]{The Multidimensional Quadratic Phase Fourier Transform: Theoretical Analysis and Applications} 
	\author{Sarga Varghese}
	\address{Department of Mathematics, SRM University AP, Amaravati-522240, India}
	
	\email{sargavarghese$@$gmail.com, gitamahato1158$@$gmail.com and manabiitism17$@$gmail.com}
	
	\author{Gita Rani Mahato}
	
	\author{Manab Kundu}
	\date{}
	
	\keywords{ Fourier transform; Quadratic phase Fourier transform; Multidimensional; Convolution; Correlation; Inversion.}
	
	\maketitle              
	
	\begin{abstract}		
		The quadratic phase Fourier transform (QPFT) is a generalization of several well known integral transforms, including the linear canonical transform (LCT), fractional Fourier transform (FrFT), and Fourier transform (FT). This paper introduces the multidimensional QPFT and investigates its theoretical properties, including Parseval’s identity and inversion theorems. Generalized convolutions and correlation for multiple variables, extending the conventional convolution for single-variable functions, is proposed within the QPFT setting. Additionally, Boas type theorem for the multidimensional QPFT is established. As applications, multiplicative filter design and the solution of integral equations using the proposed convolution operation are explored.

		\keywords{Fourier transform,  quadratic phase Fourier transform, multidimesnsional, convolution, correlation \and inversion.}
	\end{abstract}

	\section{Introduction}
	Integral transforms \cite{intt, intt1} have been used for nearly two centuries to solve various problems in applied mathematics, mathematical physics, and engineering science. By converting functions from one domain to another, integral transforms simplify complex problems, enabling the analysis and solution of differential equations, and the study of signal properties. The Fourier transform (FT), one of the widely used integral transform, is a significant tool used in signal processing and harmonic analysis \cite{1}.  Despite its long history, the field continues to evolve, with new transforms like the fractional Fourier transform(FrFT), linear canonical transform (LCT), and wavelet transform emerging in the past three decades.
	In 1980, Victor Namias \cite{frac1} introduced the fractional Fourier transform (FrFT), which he used to solve problems in quantum mechanics. By the 1990s, it gained attention for its applications in signal processing and optics. The FrFT generalizes the FT by introducing a fractional order parameter, allowing the transform to rotate signals in the time-frequency plane \cite{frac, frac2, frac3, frac4}. The FrFT was introduced to address the limitations of the FT in analyzing signals with time-varying frequency content. It is a special case of the LCT, a broader class of integral transforms that extends FrFT by incorporating four transformation parameters. The LCT, in turn, allows for more flexible time-frequency analysis \cite{lct, lct1, lct2, lct3}.
	
	Further extending these concepts, the quadratic phase Fourier transform (QPFT) includes quadratic phase terms to model complex signal behaviors, making it useful in applications such as radar, sonar, and optical systems. Also, it is used in linear time-invariant (LTI) system, signal analysis, solving integral equations, etc. QPFT was introduced by Castro et al. \cite{lpc, lmm} motivated by the work of Saitoh \cite{sai}, who used the reproducing kernel theory of QPFT for solving heat equation. Later, Prasad et al. \cite{ta,pb, sp1, sp2} discussed the QPFT framework by developing the wavelet transform, and studied its underlying features.  QPFT can be used to identify single and multi-component linear frequency modulated signals by introducing a modified ambiguity function and the Wigner distribution associated with QPFT \cite{tml}. For more recent works in QPFT theory, one can refer to \cite{yg, var, bhat}. In this paper, we are mainly focusing on
	(a) new convolution and correlation for LTI system and solving integral equations, (b) Boas theorem for analyzing the compactly supported signals, for the multidimensional cases. 
	
	The theoretical foundation of linear time-invariant (LTI) systems heavily relies on convolution and correlation processes. These methods work very well in a variety of fields, including pattern recognition, optics, and signal processing. Any continuous LTI structure's efficiency can be calculated by integrating the impulse response of the system with the incoming signals \cite{lb1}. In recent times, Wei et al. and Li et al. have made remarkable contributions in the field of convolution and correlation. They proposed new structures for several integral transforms and investigated their sampling theorems for band-limited signals \cite{dw4,dw3,dw2,dwy,dwq,dw1,bl1,bl2}.

	Boas' theorem \cite{boas} characterizes high-frequency signals by relating the vanishing of signal frequency content to a specific integral transform. This result is crucial for understanding high-frequency signals in Fourier analysis. It states that the signal frequency content \( f \) vanishes almost everywhere on the band \( (-1, 1) \) if and only if the Boas transform \( (Bf)(x) \) satisfies the equation:
	\begin{align*}
		(B(Bf))(x) = -f(x), \quad x \in \mathbb{R}.
	\end{align*}	
	Here, \( B \) is defined as:
	\begin{align*}
		(Bf)(x) = \frac{1}{\pi} \int_{0}^{\infty} \frac{f(x+t) - f(x-t)}{t^2} \sin(t) \, dt.
	\end{align*}
	In \cite{tuan},Tuan proposed another description of high frequency signals, which enables to describe band-pass and band-stop frequency signals fully.
	
	The integral equations, one of the most useful mathematical tools in both pure and applied mathematics, has enormous applications in many physical problems. In 1825 Abel, an Italian mathematician, first produced an integral equation in connection with the famous tautochrone problem. Singular integral equations encountered by Abel can easily be solved by using integral transforms. Also, many initial and boundary value problems associated with ordinary differential equation (ODE) and partial differential equation (PDE) can be transformed into problems of solving some approximate integral equations. The development of science has led to the formulation of physical laws that are often expressed as differential equations, which play a crucial role in solving engineering problems. These differential equations can be transformed into equivalent integral equations. Integral equations are equally important, as many physical problems are directly governed by them, and they can also be converted back into differential equations. Solving integral equations is essential for addressing complex physical and engineering challenges, providing a versatile framework for modeling and analyzing real-world phenomena. Their ability to bridge differential equations and practical applications highlights their significance in scientific and engineering research \cite{inte , inte1, inte2}.
	
	Multidimensional signals, depends on multiple independent variables such as space, time, or frequency, play crucial role in modern signal processing and systems theory. These signals are essential in modeling and analyzing complex systems where interactions across multiple dimensions are involved, such as in image processing, video compression, geophysical signal analysis, and multidimensional control systems. The mathematical framework of multidimensional systems involves functions and polynomials in several complex or real variables, which has been instrumental in advancing areas like iterative learning control, distributed network synthesis, and nonlinear system analysis through multidimensional transforms. Multidimensional signal processing has been traditionally based on the concepts and theory of linear systems and Fourier analysis (or other related transforms) \cite{multis0, multis1, multis2, multis3}.

	Recently, there have been significant advancements in the developement of multidimensional integral transforms. Zayed \cite{Zayed} (2018) introduced a new two-dimensional FrFT and established its properties. Building on this work, Roopkumar et al. \cite{roop} (2019) provided rigorous proofs of the inversion theorems for the multidimensional FrFT on $L^p(\mathbb{R}^N)$ for $p =1,2$, and derived the Parseval’s theorem for $L^2(\mathbb{R}^N)$. Additionally, they developed a novel approach to define convolution and correlation for the multidimensional FrFT. Later, Kundu et al. \cite{Kundu} (2022) extended the theory of the linear canonical transform to the multidimensional domain, establishing its convolution properties and further enriching the framework of multidimensional signal processing and Ahmad et al. \cite{ahm} studied uncertainty principles associated with multidimensional LCT.
	
	In \cite{castro}, Castro et al. introduced the multidimensional quadratic-phase Fourier transform (MQPFT), extending the one-dimensional QPFT to higher dimensions in non-separable sense, and established its properties, uncertainty principles, and applications. Kumar et al. \cite{kumar} further developed the n-dimensional pseudo-differential operator (n-DPDO) using the n-dimensional quadratic phase Fourier transform (n-DQPFT), proving its continuity and boundedness in \(L^p(\mathbb{R}^n)\) and applying it to generalized Schrödinger-type equations, enhancing its role in quantum mechanics. In \cite{var}, we have proved the convolution, correlation and Boas theorem for QPFT. Motivated by these developements, our work aims to further explore and extend the theory of QPFT to multidimensional domains. 
	
	The goals of this article are as follows:
	\begin{itemize}
		\item To introduce a new definition of the multidimensional QPFT. 
		\item To derive properties of the new transform, such as its inversion formula, Plancherel theorem, convolution theorem, and Boas theorem.
		\item To study the applications of the proposed convolution.
	\end{itemize}
	
	This paper is structured as:  Section 2 presents the basic concepts of FT, QPFT and multidimensional QPFT. Section 3 focuses on the convolution and correlation in the context of multidimensional QPFT, which includes three types of convolutions given in \ref{conv1}, \ref{conv2} and \ref{conv3}. Section 4 discusses inversion theorem, while Section 5 proves Boas theorem for multidimensional QPFT. Section 6 explores potential applications, including solving integral equations and designing multiplicative filters. Concluding remarks are provided in Section 7. 
	
	\section{Preliminaries} 
	This section offers a brief summary of the basic concepts and notations necessary for 
	\\For each $p \in [1, \infty)$, we have
	\begin{eqnarray*}
		L^p(\mathbb{R}^N) = \bigg\{ f~:~ ||f||_p = \Big( \frac{1}{\sqrt{(2\pi)}^N} \int_{\mathbb{R}^N} |f(\boldsymbol{x})|^p d\boldsymbol{x}\Big)^\frac{1}{p}\bigg\},
	\end{eqnarray*} 
	which is the space of all $p$-integrable functions on $\mathbb{R}^N$. 
	
	\begin{definition}
		For a function $f \in L^1(\mathbb{R}^N)$, the multidimensional FT of $f$ is defined by
		\begin{eqnarray*}
			\hat{ f}(\boldsymbol{x})= \frac{1}{(\sqrt{2\pi})^N} \int_{\mathbb{R}^N} f(\boldsymbol{t}) e^{-i\boldsymbol{t . x}} d\boldsymbol{t}, ~~ \boldsymbol{x} \in \mathbb{R}^N.
		\end{eqnarray*}
	\end{definition}

	\begin{definition}\label{def1}
		For a function $f \in L^1(\mathbb{R})$, the quadratic phase Fourier transform (QPFT) of $f$, for a given set of parameters $\Lambda=\{ a,b,c,d,e\}$, $b\neq 0$, is defined by \cite{ta}
		\begin{eqnarray}
			\mathcal{Q}_\Lambda(\omega)= \tilde{F}(\omega)=  \int_{\mathbb{R}} \mathcal{K}_\Lambda(\omega,x) f(x) dx, \hspace{3mm} \omega\in \mathbb{R},
		\end{eqnarray}
		where
		\begin{eqnarray}
			\mathcal{K}_\Lambda (\omega, x)=  \sqrt{\frac{b}{2\pi i}}~e^{i(a x^2 + b x \omega + c \omega^2 + d x + e\omega)}
		\end{eqnarray}
		is the kernel of QPFT and	$ a, b, c, d, e \in \mathbb{R}$.
		Then the inversion of QPFT is given by
		\begin{eqnarray}
			f(x)=  \Big(\mathcal{Q}_{-\Lambda} (\mathcal{Q}_{\Lambda} f)\Big)(x)=  \int_{\mathbb{R}}  \mathcal{K}_{-\Lambda} (\omega, x) (\mathcal{Q}_{\Lambda} f)(\omega) d\omega,
		\end{eqnarray}
		where
		\begin{eqnarray*}
			\mathcal{K}_{-\Lambda} (\omega, x)= \sqrt{\frac{bi}{2\pi}}~e^{-i(a x^2 + b x \omega + c \omega^2 + d x + e\omega)}.
		\end{eqnarray*}
	\end{definition}
	
	\begin{definition}\label{def}
		The multidimensional QPFT of a function $f \in L^1(\mathbb{R}^N)$ is defined as 
		\begin{eqnarray*}
			(\mathcal{Q}_{\boldsymbol{\Lambda}} f)(\boldsymbol{\omega})=  \int_{\mathbb{R}^N} \mathcal{K}_{\boldsymbol{\Lambda}}(\boldsymbol{\omega},\boldsymbol{x}) f(\boldsymbol{x}) d\boldsymbol{x},
		\end{eqnarray*}
		where
		$\boldsymbol{\omega}=(\omega_1, \omega_2, . . . , \omega_N), \boldsymbol{x}=(x_1, x_2, . . . , x_N)\in \mathbb{R}^N, 
		\mathcal{K}_{\boldsymbol{\Lambda}}(\boldsymbol{\omega},\boldsymbol{x})= \prod_{k=1}^{N} \mathcal{K}_{\Lambda_k}(\omega_k,x_k), \\\boldsymbol{\Lambda}=(\Lambda_1, \Lambda_2, . . . , \Lambda_N) \text{ and } \Lambda_k = (a_k, b_k, c_k, d_k, e_k),  k=1, 2, . . . , N.$
	\end{definition}
	Throughout this paper, we use bold letters to denote N-tuple, for instance, $\boldsymbol{a} = (a_1, a_2, . . . , a_N)$ . For $\lambda \in \mathbb{R}$, denote 
	\begin{eqnarray}\label{ep}
		e_\lambda^{\boldsymbol{a, d}}(\boldsymbol{x}) = e^{i \lambda \sum_{k=1}^{N} (a_k x_k^2 + d_k x_k)}
	\end{eqnarray}  for multidimensional domain.
	Obviously, $e_\lambda^{-\boldsymbol{a, d}}(\boldsymbol{x}) = e_{-\lambda}^{\boldsymbol{a, d}} (\boldsymbol{x})$ and then the multidimensional QPFT can be rewritten as
	\begin{eqnarray*}
		(\mathcal{Q}_{\boldsymbol{\Lambda}} f)(\boldsymbol{\omega})= C_{\boldsymbol{\Lambda}} e_1^{\boldsymbol{c, e}}(\boldsymbol{\omega})\widehat{(e_1^{\boldsymbol{a, d}}f(\boldsymbol{x}))} (\boldsymbol{b\omega}),
	\end{eqnarray*}
	where 
	\begin{eqnarray}\label{C}
		C_{\boldsymbol{\Lambda}} = \prod_{k=1}^{N} \sqrt{\frac{b_k}{i}}
	\end{eqnarray} 
	and $\widehat{(e_1^{\boldsymbol{a, d}}f(\boldsymbol{x}))}$ is the Fourier transform of $(e_1^{\boldsymbol{a, d}}f(\boldsymbol{x}))$. Using this notation, the kernel can be rewritten as
	\begin{eqnarray*}
		\mathcal{K}_{\boldsymbol{\Lambda}}(\boldsymbol{\omega},\boldsymbol{x}) = \frac{C_{\boldsymbol{\Lambda}}}{(2\pi)^\frac{N}{2}}  e_1^{\boldsymbol{a, d}}(\boldsymbol{x}) e_1^{\boldsymbol{c, e}}(\boldsymbol{\omega}) e^{i \sum_{k=1}^{N}b_k x_k \omega_k }.
	\end{eqnarray*}

	\begin{theorem}
		For functions $f,g \in L^2 (\mathbb{R}^N)$, the inner product of multidimensional QPFT of two functions $f$ and $g$ is same as the inner product of $f$ and $g$.
		\\i.e.,
		\begin{eqnarray*}
			\big< f, g \big> &=& \big< \mathcal{Q}_{\boldsymbol{\Lambda}} f, \mathcal{Q}_{\boldsymbol{\Lambda}} g\big>.
		\end{eqnarray*}	
	\end{theorem}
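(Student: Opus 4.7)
My plan is to reduce the claim to Parseval's identity for the classical multidimensional Fourier transform by exploiting the factorization
\[
(\mathcal{Q}_{\boldsymbol{\Lambda}} f)(\boldsymbol{\omega})
= C_{\boldsymbol{\Lambda}}\, e_1^{\boldsymbol{c,e}}(\boldsymbol{\omega})\,
\widehat{(e_1^{\boldsymbol{a,d}} f)}(\boldsymbol{b\omega})
\]
established in the preliminaries. Because $e_1^{\boldsymbol{c,e}}(\boldsymbol{\omega})$ and $e_1^{\boldsymbol{a,d}}(\boldsymbol{x})$ are pure-phase functions, they have unit modulus, so the entire argument amounts to stripping off two chirp multipliers, performing one linear change of variables that absorbs $|C_{\boldsymbol{\Lambda}}|^{2}$, and then invoking the classical Parseval identity.

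First I would expand
\[
\langle \mathcal{Q}_{\boldsymbol{\Lambda}} f, \mathcal{Q}_{\boldsymbol{\Lambda}} g\rangle
= \int_{\mathbb{R}^N}
C_{\boldsymbol{\Lambda}}\, e_1^{\boldsymbol{c,e}}(\boldsymbol{\omega})\,
\widehat{(e_1^{\boldsymbol{a,d}} f)}(\boldsymbol{b\omega})\;
\overline{C_{\boldsymbol{\Lambda}}\, e_1^{\boldsymbol{c,e}}(\boldsymbol{\omega})\,
\widehat{(e_1^{\boldsymbol{a,d}} g)}(\boldsymbol{b\omega})}
\,d\boldsymbol{\omega}.
\]
Since $|e_1^{\boldsymbol{c,e}}(\boldsymbol{\omega})|=1$, the $\boldsymbol{\omega}$-chirps cancel, leaving the factor
$|C_{\boldsymbol{\Lambda}}|^{2} = \prod_{k=1}^{N} |b_k|$ in front of
$\int \widehat{(e_1^{\boldsymbol{a,d}} f)}(\boldsymbol{b\omega})\,
\overline{\widehat{(e_1^{\boldsymbol{a,d}} g)}(\boldsymbol{b\omega})}\, d\boldsymbol{\omega}$.

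Next I would apply the componentwise substitution $u_k = b_k \omega_k$, whose Jacobian is $\prod_{k=1}^{N} |b_k|^{-1}$ and thus precisely cancels $|C_{\boldsymbol{\Lambda}}|^{2}$. What remains is
\[
\int_{\mathbb{R}^N}
\widehat{(e_1^{\boldsymbol{a,d}} f)}(\boldsymbol{u})\,
\overline{\widehat{(e_1^{\boldsymbol{a,d}} g)}(\boldsymbol{u})}\, d\boldsymbol{u},
\]
to which Parseval's theorem for the classical multidimensional FT applies, producing $\int (e_1^{\boldsymbol{a,d}} f)(\boldsymbol{x})\,\overline{(e_1^{\boldsymbol{a,d}} g)(\boldsymbol{x})}\, d\boldsymbol{x}$. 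Another appeal to $|e_1^{\boldsymbol{a,d}}(\boldsymbol{x})|=1$ collapses this to $\langle f,g\rangle$.

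The only genuinely delicate points are bookkeeping rather than mathematics: making sure the absolute values in the Jacobian line up with $|C_{\boldsymbol{\Lambda}}|^{2}$ (which is why the factor is written as $\sqrt{b_k/i}$ and squares to $|b_k|$ regardless of the sign of $b_k$), and noting that the factorization was originally derived for $f \in L^{1}(\mathbb{R}^N)$, so a standard density argument from $L^{1}\cap L^{2}$ to $L^{2}$ is needed to conclude the identity on the full $L^{2}$ space. I expect this density step to be the main (though mild) obstacle; everything else is algebraic cancellation.
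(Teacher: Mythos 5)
Your proof is correct and is precisely the ``straightforward'' argument the paper alludes to: the paper itself gives no proof of this theorem (it only remarks that the proof is omitted), and the natural route is exactly your reduction to classical Parseval via the factorization $(\mathcal{Q}_{\boldsymbol{\Lambda}}f)(\boldsymbol{\omega})=C_{\boldsymbol{\Lambda}}e_1^{\boldsymbol{c,e}}(\boldsymbol{\omega})\widehat{(e_1^{\boldsymbol{a,d}}f)}(\boldsymbol{b\omega})$, cancellation of the unimodular chirps, the substitution $u_k=b_k\omega_k$ absorbing $|C_{\boldsymbol{\Lambda}}|^2=\prod_k|b_k|$, and a density extension from $L^1\cap L^2$ to $L^2$. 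The only cosmetic caveat is that with the paper's sign convention for the Fourier kernel the factorization really produces $\widehat{(e_1^{\boldsymbol{a,d}}f)}(-\boldsymbol{b\omega})$, which is immaterial here since the corresponding change of variables has the same Jacobian modulus.
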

	As the proof is straightforward, we omit the proof part.
	\begin{corollary}\label{nm}
		If we take $f=g$, the equation takes the particular form as
		\begin{eqnarray}
			||f||_2^2 &=& ||\mathcal{Q}_{\boldsymbol{\Lambda}} f||_2^2. 
		\end{eqnarray}
	\end{corollary}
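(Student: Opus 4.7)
The plan is to obtain this identity as an immediate specialization of the preceding theorem. That theorem asserts $\langle f, g\rangle = \langle \mathcal{Q}_{\boldsymbol{\Lambda}} f, \mathcal{Q}_{\boldsymbol{\Lambda}} g\rangle$ for all $f,g \in L^2(\mathbb{R}^N)$. Substituting $g = f$ and invoking the definition of the $L^2$ norm used throughout the paper, namely $\|h\|_2^2 = \langle h, h\rangle$, the left side becomes $\|f\|_2^2$ while the right side becomes $\|\mathcal{Q}_{\boldsymbol{\Lambda}} f\|_2^2$. Thus the corollary follows in one line from the theorem.

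If one wishes to justify the preceding theorem directly (since its proof was omitted), the natural route is to use the factorization $(\mathcal{Q}_{\boldsymbol{\Lambda}} f)(\boldsymbol{\omega}) = C_{\boldsymbol{\Lambda}}\, e_1^{\boldsymbol{c,e}}(\boldsymbol{\omega})\, \widehat{(e_1^{\boldsymbol{a,d}} f)}(\boldsymbol{b\omega})$ displayed just before the theorem. Multiplying this by the conjugate of the analogous expression for $g$ and integrating in $\boldsymbol{\omega}$, the unit-modulus phase $e_1^{\boldsymbol{c,e}}(\boldsymbol{\omega})$ cancels, and the change of variables $\boldsymbol{\omega}\mapsto \boldsymbol{b\omega}$ contributes a Jacobian $\prod_{k}|b_k|^{-1}$ that balances $|C_{\boldsymbol{\Lambda}}|^2 = \prod_{k}|b_k|$ from \eqref{C}. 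What remains is the ordinary Fourier Plancherel identity applied to the pair $e_1^{\boldsymbol{a,d}} f$ and $e_1^{\boldsymbol{a,d}} g$; since $|e_1^{\boldsymbol{a,d}}| \equiv 1$, one recovers $\langle f, g\rangle$ exactly.

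The only non-trivial bookkeeping is confirming that the various normalizations collapse to $1$, specifically the factor $(2\pi)^{-N/2}$ built into the paper's definition of $\|\cdot\|_p$, the constant $C_{\boldsymbol{\Lambda}}$, and the Jacobian produced by the dilation $\boldsymbol{\omega}\mapsto\boldsymbol{b\omega}$. Beyond this arithmetic check, there is no real obstacle; the corollary itself is a one-step consequence of the theorem.
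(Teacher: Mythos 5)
Your proposal is correct and matches the paper's intent exactly: the paper offers no separate proof of the corollary, treating it precisely as the one-line specialization $g=f$ of the preceding Parseval-type theorem, which is what your first paragraph does. Your supplementary sketch of the omitted theorem via the factorization $(\mathcal{Q}_{\boldsymbol{\Lambda}} f)(\boldsymbol{\omega}) = C_{\boldsymbol{\Lambda}}\, e_1^{\boldsymbol{c,e}}(\boldsymbol{\omega})\, \widehat{(e_1^{\boldsymbol{a,d}} f)}(\boldsymbol{b\omega})$ and classical Plancherel is also sound, with the constants $|C_{\boldsymbol{\Lambda}}|^2=\prod_k|b_k|$ and the Jacobian of $\boldsymbol{\omega}\mapsto\boldsymbol{b\omega}$ cancelling as you note.
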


	\section{Convolution and correlation}
	In this Section, we discuss analogous type convolution and correlation theorems for multidimensional QPFT \cite{ta}.
	\begin{definition}
		For the functions $f, g \in L^1 (\mathbb{R})$, we can define convolution by \cite{ta}
		\begin{eqnarray}
			(f * g)(x) = f(x) * g(x) = \frac{1}{\sqrt{2 \pi}} \int_{\mathbb{R}} f(\tau) g(x-\tau) d\tau,
		\end{eqnarray}
		which satisfies
		\begin{eqnarray}
			\hat{(f * g)}(\omega) =  (\hat{ f})(\omega) (\hat{ g})(\omega),
		\end{eqnarray}
		where $\hat{ f}$ is Fourier transform of $f$.
	\end{definition}	
	Analogous to this, the convolution in multidimensional case is defined as follows.
	\begin{definition}
		For the functions $f, g \in L^1 (\mathbb{R}^N)$, we can define convolution by 
		\begin{eqnarray}
			(f \star g)(\boldsymbol{x}) = f(\boldsymbol{x}) \star g(\boldsymbol{x}) = \frac{1}{(\sqrt{2 \pi})^N} \int_{\mathbb{R}^N} f(\boldsymbol{\tau}) g(\boldsymbol{x}-\boldsymbol{\tau}) d\boldsymbol{\tau},
		\end{eqnarray}
		which satisfies
		\begin{eqnarray}
			\widehat{(f \star g)}(\boldsymbol{\omega}) =  (\hat{ f})(\boldsymbol{\omega}) (\hat{ g})(\boldsymbol{\omega}),
		\end{eqnarray}
		where $\hat{ f}$ is the multidimensional FT of $f$.
	\end{definition}	
	
	In the following two subsections, we define the multidimensional convolutions and correlation as extensions of those proposed by Prasad et al. in \cite{ta}, presented in Theorems \ref{con1} and \ref{con2}.
	
	\subsection{Convolution: type 1}
	\begin{theorem}\cite{ta} \label{con1}
		Let $f, g \in L^1(\mathbb{R})$. Assume $\tilde{F}(\omega) = (\mathcal{Q}_\Lambda f)(\omega)$ and $\tilde{G}(\omega)= (\mathcal{Q}_\Lambda g)(\omega)$, then
		\begin{eqnarray*}
			(\mathcal{Q}_\Lambda (f \otimes g))(\omega) = e^{-i(c \omega^2 + e\omega)} \tilde{F}(\omega) \tilde{G}(\omega),
		\end{eqnarray*}
		where
		\begin{eqnarray*}
			(f \otimes g)(x) = \sqrt{\frac{b}{i}} e^{-iax^2} \Big(\big(f(t) e^{i a x^2}\big) * \big(g(x)e^{iax^2}\big)\Big).
		\end{eqnarray*}
	\end{theorem}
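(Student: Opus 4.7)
The strategy is to factor the QPFT as a chirp-modulated Fourier transform and then reduce the claim to the classical Fourier convolution theorem, exactly in the one-dimensional spirit of the factorization already recorded after Definition~\ref{def} for the multidimensional case. Introduce the auxiliary functions $\phi(t) = f(t)\,e^{iat^2}$ and $\psi(t) = g(t)\,e^{iat^2}$; since these chirp factors have unit modulus, $\phi,\psi \in L^1(\mathbb{R})$, so the ordinary convolution $\phi * \psi$ is well defined and its Fourier transform is $\widehat{\phi}\,\widehat{\psi}$. A direct rearrangement of Definition~\ref{def1} (pulling the $\omega$-dependent phases outside the integral and renormalising) yields the key identity
\begin{eqnarray*}
\tilde F(\omega) \;=\; \sqrt{\tfrac{b}{i}}\; e^{i(c\omega^2 + e\omega)}\, \widehat{\phi}\bigl(-(b\omega+d)\bigr),
\end{eqnarray*}
and the analogous formula for $\tilde G$ in terms of $\widehat{\psi}$.

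First I would substitute the explicit formula for $f \otimes g$ into $(\mathcal{Q}_\Lambda(f \otimes g))(\omega)$. The factor $e^{-iax^2}$ built into $f \otimes g$ is engineered to cancel the quadratic chirp $e^{iax^2}$ in the QPFT kernel, and the $\omega$-dependent phase $e^{i(c\omega^2+e\omega)}$ comes outside the integral. What remains inside is an $x$-integral of $(\phi * \psi)(x)$ against $e^{ix(b\omega+d)}$, which up to the normalization $1/\sqrt{2\pi}$ is the Fourier transform of $\phi*\psi$ evaluated at $-(b\omega+d)$. Applying the Fourier convolution theorem converts this into $\widehat{\phi}\bigl(-(b\omega+d)\bigr)\,\widehat{\psi}\bigl(-(b\omega+d)\bigr)$.

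Inverting the key identity to replace each of these two factors by $\tilde F(\omega)$ and $\tilde G(\omega)$ contributes two copies of $\sqrt{i/b}\, e^{-i(c\omega^2+e\omega)}$, while the manipulation of the left-hand side leaves a prefactor $(b/i)\,e^{i(c\omega^2+e\omega)}$. Collecting everything, the $b/i$ and $i/b$ telescope, and two negative phases combine with one positive phase to leave exactly $e^{-i(c\omega^2+e\omega)}\,\tilde F(\omega)\,\tilde G(\omega)$, which is the desired identity. The main obstacle is not analytical but combinatorial: carefully tracking the constants $\sqrt{b/i}$, $\sqrt{b/(2\pi i)}$ and $\sqrt{2\pi}$ together with the linear-phase shift $d$, the latter of which does not disturb the convolution structure because it enters only as a translation in the Fourier variable and therefore cancels symmetrically between the two substitutions.
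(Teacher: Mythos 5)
Your proposal is correct: the key identity $\tilde F(\omega)=\sqrt{b/i}\,e^{i(c\omega^2+e\omega)}\,\widehat{\phi}\bigl(-(b\omega+d)\bigr)$ with $\phi(t)=f(t)e^{iat^2}$ follows directly from Definition~\ref{def1} under the paper's normalization $\hat f(\xi)=\tfrac{1}{\sqrt{2\pi}}\int f(t)e^{-it\xi}\,dt$, the chirp $e^{-iax^2}$ in $f\otimes g$ does cancel the $e^{iax^2}$ in the kernel, and the constants balance exactly ($\tfrac{b}{i}$ from the left-hand side against $\sqrt{i/b}$ from each of the two substitutions, and $+1-2=-1$ copies of the phase $e^{i(c\omega^2+e\omega)}$). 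Note, however, that the paper does not prove this one-dimensional statement at all --- it is quoted from the reference of Sharma and Prasad --- and the closest in-paper argument is the proof of its multidimensional analogue, Theorem~\ref{conv1}. That proof proceeds by brute force: expand both definitions into a double integral, apply Fubini, substitute $\boldsymbol{\lambda}=\boldsymbol{x}-\boldsymbol{\tau}$, recognize the inner integral as $\mathcal{Q}_{\boldsymbol{\Lambda}}(g)(\boldsymbol{\omega})$ and the outer one as a phase times $\mathcal{Q}_{\boldsymbol{\Lambda}}(f)(\boldsymbol{\omega})$. Your route is genuinely different in organization: you modularize by writing $\mathcal{Q}_\Lambda$ as (chirp)\,$\circ$\,(Fourier transform)\,$\circ$\,(chirp) and invoke the classical convolution theorem $\widehat{\phi*\psi}=\hat\phi\,\hat\psi$ as a black box, which makes the structural reason for the result transparent and explains why the definition of $\otimes$ carries exactly the chirps it does; the cost is that the change of variables hidden inside the classical theorem is the same one the paper performs explicitly, so the two arguments are computationally equivalent, and your version additionally requires keeping the shift $d$ consistently inside the frequency argument $-(b\omega+d)$ rather than inside the chirp, which you do correctly. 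Your approach also generalizes verbatim to the multidimensional Theorem~\ref{conv1} and to the type-3 convolution of Theorem~\ref{conv2}, where the paper again repeats the direct computation.
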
 
	Building on this, now we define the multidimensional extension of this convolution.
	
	\begin{definition}\label{def2}
		Let $f(\boldsymbol{x})$, $g(\boldsymbol{x}) \in L^1({\mathbb{R}^N})$. Then we define the convolution for multidimensional QPFT as follows
		\begin{eqnarray*}
			(f {\otimes} g)(\boldsymbol{x}) = \sqrt{\frac{\prod_{k=1}^{N} b_k}{(i)^N}} e^{-i(\sum_{k=1}^{N} a_k x_k^2)} \Big(\big(f(\boldsymbol{x})e^{i(\sum_{k=1}^{N} a_k x_k^2)}\big) \star \big(g(\boldsymbol{x})e^{i(\sum_{k=1}^{N} a_k x_k^2)} \big)\Big).
		\end{eqnarray*}
	\end{definition}
	
	\begin{theorem}
		Let the functions $f$ and $g$ be in $L^1({\mathbb{R}^N})$. Then 
		\begin{eqnarray*}
			(f {\otimes} g)(\boldsymbol{x}) \in L^1(\mathbb{R}^N) ~~~~and~~~~ ||f {\oplus} g||_1 \leq \sqrt{(2\pi )^N \prod_{k=1}^{N} b_k}~ ||f||_1 ||g||_1.
		\end{eqnarray*}
	\end{theorem}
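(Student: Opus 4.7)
The plan is to reduce this assertion to the classical Young convolution inequality on $L^1(\mathbb{R}^N)$ after stripping away the unimodular chirp factors in Definition \ref{def2}. The essential observation is that $(f \otimes g)(\boldsymbol{x})$ is nothing more than an ordinary (normalized) convolution of two chirp-modulated copies of $f$ and $g$, multiplied by a constant of modulus $\sqrt{\prod_{k=1}^{N} b_k}$ and a unimodular factor $e^{-i\sum_k a_k x_k^2}$.

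First, I would introduce the auxiliary functions
\begin{equation*}
F(\boldsymbol{x}) = f(\boldsymbol{x})\,e^{i\sum_{k=1}^{N} a_k x_k^2}, \qquad G(\boldsymbol{x}) = g(\boldsymbol{x})\,e^{i\sum_{k=1}^{N} a_k x_k^2}.
\end{equation*}
Since $|e^{i\theta}| = 1$ for all real $\theta$, we have $|F(\boldsymbol{x})| = |f(\boldsymbol{x})|$ and $|G(\boldsymbol{x})| = |g(\boldsymbol{x})|$ pointwise, so $F, G \in L^1(\mathbb{R}^N)$ with $\|F\|_1 = \|f\|_1$ and $\|G\|_1 = \|g\|_1$. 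Definition \ref{def2} then reads
\begin{equation*}
(f \otimes g)(\boldsymbol{x}) = \sqrt{\frac{\prod_{k=1}^{N} b_k}{i^{N}}}\; e^{-i\sum_{k=1}^{N} a_k x_k^2}\,(F \star G)(\boldsymbol{x}),
\end{equation*}
so that $|(f\otimes g)(\boldsymbol{x})| = \sqrt{\prod_{k=1}^{N} b_k}\,|(F \star G)(\boldsymbol{x})|$ because both the chirp and $i^{-N}$ have modulus one.

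Next, I would bound $\|F \star G\|_1$ by the standard Young inequality argument: insert the definition of $\star$, take absolute values under the integral, and apply Fubini--Tonelli to factor the double integral into a product of the two $L^1$ masses of $F$ and $G$. Carefully tracking the $(2\pi)^{-N/2}$ normalizations that appear both in the definition of $\star$ and in the definition of $\|\cdot\|_1$ used in the paper yields $\|F \star G\|_1 \leq C_{N}\,\|F\|_1 \|G\|_1 = C_N\,\|f\|_1\|g\|_1$, with $C_N$ being the normalization constant that produces the $\sqrt{(2\pi)^{N}}$ factor in the statement.

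Multiplying through by $\sqrt{\prod_{k=1}^N b_k}$ then gives the stated inequality and, as a byproduct, the integrability $f \otimes g \in L^1(\mathbb{R}^N)$. No deep analytic difficulty is expected; the sole careful step is the constant bookkeeping in the Young estimate, since the paper's normalizations of the convolution $\star$ and of the $L^1$ norm each carry their own $(2\pi)^{-N/2}$ factors that must be combined correctly to recover the coefficient $\sqrt{(2\pi)^N \prod_{k=1}^N b_k}$.
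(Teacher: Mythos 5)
Your proposal is correct and follows essentially the same route as the paper: both strip off the unimodular chirp factors and the modulus-one constant $i^{-N/2}$, then apply the Fubini--Tonelli/Young argument to the remaining ordinary convolution $\star$ and track the $(2\pi)^{-N/2}$ normalizations. The only difference is presentational (you name the modulated functions $F$ and $G$ explicitly), so nothing further is needed.
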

	\begin{proof}
		Using the Definition \ref{def2} and Fubini's theorem, we get
		\begin{eqnarray*}
			\int_{\mathbb{R}^N}|(f {\otimes}~ g) (\boldsymbol{x})|d\boldsymbol{x} 
			&=& \small\sqrt{\prod_{k=1}^{N} b_k} \int_{\mathbb{R}^N}  \bigg{|} e^{-i(\sum_{k=1}^{N} a_k x_k^2)} \Big(\big(f(\boldsymbol{x})e^{i(\sum_{k=1}^{N} a_k x_k^2)}\big) \\&&\star \big(g(\boldsymbol{x})e^{i(\sum_{k=1}^{N} a_k x_k^2)} \big)\Big) \bigg{|} d\boldsymbol{x}
			\\&\leq& \sqrt{\prod_{k=1}^{N} b_k} \frac{1}{(\sqrt{2\pi})^N}\int_{\mathbb{R}^N}  \big{|}f(\boldsymbol{\tau}) \big{|} \bigg[ \int_{\mathbb{R}^N} \bigg{|} g(\boldsymbol{x}-\boldsymbol{\tau})\bigg{|}d\boldsymbol{x} \bigg] d\boldsymbol{\tau }
			\\&=& \sqrt{(2\pi )^N \prod_{k=1}^{N} b_k} ||f||_1 ||g||_1. 
		\end{eqnarray*}
		This completes the theorem.	
	\end{proof}
	Having established the definition and key properties, we are now in a position to state the convolution theorem for multidimensional QPFT.
	\begin{theorem}\label{conv1} (Convolution theorem for multidimensional QPFT) Let $\mathcal{Q}_{\boldsymbol{\Lambda}} f$ and $\mathcal{Q}_{\boldsymbol{\Lambda}} g$ be the multidimensional QPFTs of $f, g \in L^1 (\mathbb{R}^N)$ respectively. Then
		\begin{eqnarray*}
			\mathcal{Q}_{\boldsymbol{\Lambda}} (f \otimes g)(\boldsymbol{\omega}) 
			&=& e^{-i\sum_{k=1}^{N}(c_k \omega_k^2 + e_k \omega_k)}\mathcal{Q}_{\boldsymbol{\Lambda}}(g)(\boldsymbol{\omega}) \mathcal{Q}_{\boldsymbol{\Lambda}}(f)(\boldsymbol{\omega}).
		\end{eqnarray*}
	\end{theorem}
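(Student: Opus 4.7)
The plan is to compute $\mathcal{Q}_{\boldsymbol{\Lambda}}(f\otimes g)(\boldsymbol{\omega})$ directly from the integral definition of the multidimensional QPFT together with Definition \ref{def2}, and then reduce the resulting double integral to a product of two QPFT integrals via a translation change of variables. Conceptually this is the one-dimensional argument of Theorem \ref{con1} carried out coordinate-by-coordinate.

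First I would expand
\begin{eqnarray*}
\mathcal{Q}_{\boldsymbol{\Lambda}}(f\otimes g)(\boldsymbol{\omega}) = \int_{\mathbb{R}^N} \mathcal{K}_{\boldsymbol{\Lambda}}(\boldsymbol{\omega},\boldsymbol{x}) (f\otimes g)(\boldsymbol{x}) \, d\boldsymbol{x}
\end{eqnarray*}
and substitute the formula from Definition \ref{def2}. The kernel carries the factor $e^{i\sum a_k x_k^2}$ coming from $e_1^{\boldsymbol{a,d}}(\boldsymbol{x})$, while the convolution carries $e^{-i\sum a_k x_k^2}$ out front; these cancel immediately, leaving only the linear $e^{i\sum d_k x_k}$ and the mixed $e^{i\sum b_k x_k \omega_k}$ pieces together with the quadratic terms $e^{i\sum a_k \tau_k^2}$ and $e^{i\sum a_k (x_k-\tau_k)^2}$ inherited from the inner convolution. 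This cancellation is precisely the role the quadratic prefactor in Definition \ref{def2} was designed to play, and it is the central algebraic observation of the proof.

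Next I would invoke Fubini's theorem (justified because $f,g\in L^1(\mathbb{R}^N)$ and every phase is unimodular) to put the $\boldsymbol{\tau}$ integral on the outside, and in the inner $\boldsymbol{x}$ integral substitute $\boldsymbol{u}=\boldsymbol{x}-\boldsymbol{\tau}$. Since the linear forms $\sum d_k x_k$ and $\sum b_k x_k\omega_k$ split additively under $\boldsymbol{x}=\boldsymbol{u}+\boldsymbol{\tau}$, and the quadratic pieces $\sum a_k\tau_k^2$ and $\sum a_k u_k^2$ are already separated, the double integral factors as a product of two single integrals, one depending only on $\boldsymbol{\tau}$ and one only on $\boldsymbol{u}$.

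Finally, each of the two single integrals coincides, up to the prefactor $\tfrac{(2\pi)^{N/2}}{C_{\boldsymbol{\Lambda}}}\,e^{-i\sum(c_k\omega_k^2+e_k\omega_k)}$, with $\mathcal{Q}_{\boldsymbol{\Lambda}}f(\boldsymbol{\omega})$ and $\mathcal{Q}_{\boldsymbol{\Lambda}}g(\boldsymbol{\omega})$ respectively. Collecting constants, the factor $\tfrac{C_{\boldsymbol{\Lambda}}^2}{(2\pi)^N}\cdot\tfrac{(2\pi)^N}{C_{\boldsymbol{\Lambda}}^2}$ simplifies to $1$, and the exponentials combine (one $e^{+i\sum(c_k\omega_k^2+e_k\omega_k)}$ from the kernel against two $e^{-i\sum(c_k\omega_k^2+e_k\omega_k)}$ from the two QPFT factors) to leave exactly one $e^{-i\sum(c_k\omega_k^2+e_k\omega_k)}$, yielding the claimed identity. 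The main obstacle is purely bookkeeping — tracking the quadratic and linear phases cleanly through the substitution — and no new conceptual ingredient beyond the one-dimensional result is required.
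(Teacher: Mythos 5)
Your proposal is correct and follows essentially the same route as the paper's proof: expand via Definition \ref{def2}, let the quadratic prefactor $e^{-i\sum a_k x_k^2}$ cancel the kernel's $e_1^{\boldsymbol{a,d}}$ quadratic part, apply Fubini, substitute $\boldsymbol{x}-\boldsymbol{\tau}$, and identify the two resulting factors as QPFTs up to the phase $e^{-i\sum(c_k\omega_k^2+e_k\omega_k)}$. Your bookkeeping of the constants and of the surviving $e^{-i\sum(c_k\omega_k^2+e_k\omega_k)}$ factor matches the paper's computation.
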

	\begin{proof} By using the Definitions \ref{def} and \ref{def2}, along with some calculations, we obtain
		\begin{eqnarray*}
			&& \mathcal{Q}_{\boldsymbol{\Lambda}} (f \otimes g)(\boldsymbol{\omega}) 
			\\&=&  \mathcal{Q}_{\boldsymbol{\Lambda}} \bigg(\sqrt{\frac{\prod_{k=1}^{N} b_k}{(2\pi i)^N}} e^{-i(\sum_{k=1}^{N} a_k x_k^2)} \\&& \times \Big( \int_{\mathbb{R}^N} f(\boldsymbol{\tau})e^{i(\sum_{k=1}^{N} a_k \tau_k^2)} g(\boldsymbol{x - \tau}) e^{i(\sum_{k=1}^{N} a_k (x_k- \tau_k)^2)}
			d\boldsymbol{\tau} \Big)\bigg)(\boldsymbol{\omega})
			\\&=&\sqrt{\frac{\prod_{k=1}^{N} b_k}{(2\pi i )^N}} e^{i \sum_{k=1}^{N}(c_k\omega_k^2 + e_k \omega_k)} \int_{\mathbb{R}^N}  f(\boldsymbol{\tau})e^{i(\sum_{k=1}^{N} a_k \tau_k^2)} \\&&\times \bigg(\sqrt{\frac{\prod_{k=1}^{N} b_k}{(2\pi i)^N}} \Big( \int_{\mathbb{R}^N} e^{i \sum_{k=1}^{N} (b_k x_k \omega_k +  d_k x_k)}  g(\boldsymbol{x - \tau})e^{i(\sum_{k=1}^{N} a_k (x_k- \tau_k)^2)}
			\Big) d\boldsymbol{x} \bigg) d\boldsymbol{\tau}.
		\end{eqnarray*}
		Setting $\boldsymbol{x - \tau} = \boldsymbol{\lambda}$, we obtain the following expression.
		\begin{eqnarray*}
			&& \mathcal{Q}_{\boldsymbol{\Lambda}} (f \otimes g)(\boldsymbol{\omega}) 
			\\&=& \sqrt{\frac{\prod_{k=1}^{N} b_k}{(2\pi i )^N}}  \int_{\mathbb{R}^N}  f(\boldsymbol{\tau})e^{i(\sum_{k=1}^{N} a_k \tau_k^2 + b_k \tau_k \omega_k+ d_k \tau_k)} \\&&\times \bigg(\sqrt{\frac{\prod_{k=1}^{N} b_k}{(2\pi i)^N}} \Big( \int_{\mathbb{R}^N} e^{i \sum_{k=1}^{N} (a_k \lambda_k^2+ b_k \lambda_k \omega_k + c_k\omega_k^2 +  d_k \lambda_k + e_k \omega_k)}  g(\boldsymbol{\lambda})
			\Big) d\boldsymbol{\lambda} \bigg)d\boldsymbol{\tau}
			\\&=& \sqrt{\frac{\prod_{k=1}^{N} b_k}{(2\pi i )^N}}  \int_{\mathbb{R}^N}  f(\boldsymbol{\tau})e^{i(\sum_{k=1}^{N} a_k \tau_k^2 + b_k \tau_k \omega_k+ d_k \tau_k)} \mathcal{Q}_{\boldsymbol{\Lambda}}(g)(\boldsymbol{\omega})d\boldsymbol{\tau}
			\\&=& e^{-i\sum_{k=1}^{N}(c_k \omega_k^2 + e_k \omega_k)}\mathcal{Q}_{\boldsymbol{\Lambda}}(g)(\boldsymbol{\omega}) \mathcal{Q}_{\boldsymbol{\Lambda}}(f)(\boldsymbol{\omega}).
		\end{eqnarray*}
		Hence, the proof is complete.
	\end{proof}
	
	\subsection{Convolution: type 2}
	\begin{definition}\cite{ta}(New convolution and correlation for QPFT) \label{con2}
		The new convolution operation $\oplus$ of QPFT for two signals $f,g \in L^1(\mathbb{R})$ is defined by
		\begin{eqnarray*}
			(f \oplus g) (x) = \sqrt{\frac{b}{\pi i}} \int_{\mathbb{R}} f(\tau) g(\sqrt{2}x - \tau) e^{2ai(\frac{x}{\sqrt{2}}- \tau)^2 + i (\sqrt{2}- 1)dx} d\tau,
		\end{eqnarray*}
		and a dual operation $\ominus$ of the new convolution operation is defined as
		\begin{eqnarray*}
			(f \ominus g) (x) = \sqrt{\frac{b i}{\pi}} \int_{\mathbb{R}} f(\tau) g(\sqrt{2}x - \tau) e^{2ci(\frac{x}{\sqrt{2}}- \tau)^2 - i (\sqrt{2}- 1)ex} d\tau.
		\end{eqnarray*}
	\end{definition}
	
	\begin{theorem} \cite{ta} Let $\mathcal{Q}_{\Lambda} f$ and $\mathcal{Q}_{\Lambda} g$ be the multidimensional QPFTs of $f, g \in L^1 (\mathbb{R})$ respectively. Then
		\begin{eqnarray*}
			\mathcal{Q}_\Lambda(f \oplus g) (\omega) = e^{-i(\sqrt{2}-1)e\omega} \mathcal{Q}_\Lambda(f)(\frac{\omega}{2}) \mathcal{Q}_\Lambda(g)(\frac{\omega}{2}), 
		\end{eqnarray*}
		and
		\begin{eqnarray*}
			\mathcal{Q}_\Lambda\Big[e^{i(\sqrt{2}-1)dx}f\big(\frac{x}{\sqrt{2}}\big) g\big(\frac{x}{\sqrt{2}}\big)\Big] (\omega) = \big( \mathcal{Q}_\Lambda(f) \ominus \mathcal{Q}_\Lambda(g)\big)(\omega).
		\end{eqnarray*}
	\end{theorem}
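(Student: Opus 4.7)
The plan is to establish both identities by direct computation: expand the definitions of $\oplus$ (resp.\ $\ominus$) and of $\mathcal{Q}_\Lambda$, use Fubini's theorem to interchange the order of integration (which is legitimate because the exponential kernels are unimodular and $f,g\in L^1(\mathbb{R})$), then apply a linear change of variables that decouples the resulting double integral into the product of two single QPFT integrals evaluated at $\omega/2$. The factor $e^{-i(\sqrt{2}-1)e\omega}$ will emerge naturally as the residual $\omega$-only phase left over after one matches the transformed exponent against the target kernel at half-frequency.

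For the first identity, I would begin by writing
\begin{equation*}
\mathcal{Q}_\Lambda(f\oplus g)(\omega)
= \sqrt{\tfrac{b}{2\pi i}}\sqrt{\tfrac{b}{\pi i}}\int_{\mathbb{R}}\!\!\int_{\mathbb{R}} f(\tau)\,g(\sqrt{2}x-\tau)\,e^{i\Phi(x,\tau,\omega)}\,d\tau\,dx,
\end{equation*}
with $\Phi(x,\tau,\omega)=ax^{2}+bx\omega+c\omega^{2}+dx+e\omega+2a(x/\sqrt{2}-\tau)^{2}+(\sqrt{2}-1)dx$. The decisive step is the substitution $u=\sqrt{2}x-\tau$, so that $x=(u+\tau)/\sqrt{2}$ and $dx=du/\sqrt{2}$. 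After this change of variables one verifies the algebraic identity $ax^{2}+2a(x/\sqrt{2}-\tau)^{2}=a(u^{2}+\tau^{2})$, which is exactly what is needed to separate the exponent into a sum of the form $\psi(u,\omega)+\psi(\tau,\omega)+\rho(\omega)$. Collecting the prefactors, the double integral factors as $\mathcal{Q}_\Lambda f(\omega/2)\,\mathcal{Q}_\Lambda g(\omega/2)$ times the $\omega$-only residue, which I expect to be exactly $e^{-i(\sqrt{2}-1)e\omega}$.

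For the second identity, the symmetric route is to apply $\mathcal{Q}_\Lambda$ directly to $e^{i(\sqrt{2}-1)dx}f(x/\sqrt{2})g(x/\sqrt{2})$, substitute $t=x/\sqrt{2}$, and then insert $f(t)$ and $g(t)$ as inverse QPFTs of $\mathcal{Q}_\Lambda f$ and $\mathcal{Q}_\Lambda g$ via Definition \ref{def1}; after swapping integration order one evaluates the inner $t$-integral in closed form (it is again the QPFT kernel evaluated at a shifted frequency) and recognises the result as $(\mathcal{Q}_\Lambda f)\ominus(\mathcal{Q}_\Lambda g)$. Alternatively, since $\oplus$ and $\ominus$ are interchanged by swapping the roles of the parameter pair $(a,d)$ with $(c,e)$ and using the symmetry between QPFT and its inverse, the second identity can be deduced from the first by a Parseval-type duality argument.

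The main obstacle will be the bookkeeping of the quadratic phases: tracking how $ax^{2}$, $c\omega^{2}$, the cross term $2a(x/\sqrt{2}-\tau)^{2}$ and the linear corrections $(\sqrt{2}-1)dx$ recombine under $x\mapsto(u+\tau)/\sqrt{2}$ requires patience, and the factor of $\sqrt{2}$ inside the argument of $g$ is precisely what forces the QPFTs on the right-hand side to be evaluated at $\omega/2$ rather than at $\omega$. Justifying Fubini is a minor technical point: the integrand's modulus equals $|f(\tau)g(\sqrt{2}x-\tau)|$ up to a bounded constant, so by the translation- and dilation-invariance of Lebesgue measure the double integral of the modulus is finite whenever $f,g\in L^{1}(\mathbb{R})$.
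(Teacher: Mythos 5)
Your approach is essentially the paper's: the same substitution $u=\sqrt{2}x-\tau$, the same phase identity $ax^{2}+2a(x/\sqrt{2}-\tau)^{2}=a(u^{2}+\tau^{2})$, and Fubini to split the double integral; for the second identity the paper runs the computation in the opposite direction (starting from $(\mathcal{Q}_\Lambda f)\ominus(\mathcal{Q}_\Lambda g)$, substituting $\xi=\sqrt{2}\omega-v$ and invoking inversion to recover $g$), but that is the same argument read backwards, and your Parseval-type duality remark is a reasonable alternative.

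One concrete point where your expectation will not survive the bookkeeping: after $x=(u+\tau)/\sqrt{2}$ the cross term becomes $bx\omega=b\tau\,(\omega/\sqrt{2})+bu\,(\omega/\sqrt{2})$, so the two factors that emerge are QPFTs evaluated at $\omega/\sqrt{2}$, not at $\omega/2$; correspondingly the residual phase is obtained by comparing $c\omega^{2}+e\omega$ with $2\bigl(c(\omega/\sqrt{2})^{2}+e(\omega/\sqrt{2})\bigr)=c\omega^{2}+\sqrt{2}\,e\omega$, which leaves exactly $e^{-i(\sqrt{2}-1)e\omega}$. The paper's own proof indeed arrives at $\mathcal{Q}_\Lambda(f)(\omega/\sqrt{2})\,\mathcal{Q}_\Lambda(g)(\omega/\sqrt{2})$, so the displayed statement's argument $\omega/2$ appears to be a typographical slip that your proposal has inherited; if you carry out your own substitution carefully you will land on $\omega/\sqrt{2}$, and you should state the theorem accordingly rather than force the computation to match $\omega/2$.
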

	We now define analogous convolution and a convolution
	theorem for functions on $\mathbb{R}^N$ to the above given theorem.
	\begin{definition}(New convolution and correlation for multidimensional QPFT)\label{conv3}
		The new convolution operation $\oplus$ of QPFT for two signals $f,g \in L^1(\mathbb{R}^N)$ is defined by
		\begin{eqnarray*}
			(f \boldsymbol{\oplus} g) (\boldsymbol{x}) = \sqrt{\frac{\prod_{k=1}^{N}b}{(\pi i)^N}} \int_{\mathbb{R}^N} f(\boldsymbol{\tau}) g(\sqrt{2}\boldsymbol{x} - \boldsymbol{\tau}) e^{\sum_{k=1}^{N}2ia_k(\frac{x_k}{\sqrt{2}}- \tau_k)^2 + i (\sqrt{2}- 1)d_kx_k} d\boldsymbol{\tau},
		\end{eqnarray*}
		and a dual operation $\boldsymbol{\ominus}$ of the new convolution operation is defined as
		\begin{eqnarray*}
			(f \boldsymbol{\ominus} g) (\boldsymbol{x}) = \sqrt{\frac{i^N\prod_{k=1}^{N}b_k }{(\pi)^N}} \int_{\mathbb{R}} f(\boldsymbol{\tau}) g(\sqrt{2}\boldsymbol{x} - \boldsymbol{\tau}) e^{-(\sum_{k=1}^{N}2c_ki(\frac{x_k}{\sqrt{2}}- \tau_k)^2 + i (\sqrt{2}- 1)e_kx_k)} d\boldsymbol{\tau_k}.
		\end{eqnarray*}
	\end{definition}	
	
	\begin{theorem} Let $\mathcal{Q}_{\boldsymbol{\Lambda}} f$ and $\mathcal{Q}_{\boldsymbol{\Lambda}} g$ be the multidimensional QPFTs of $f, g \in L^1 (\mathbb{R}^N)$ respectively. Then
		\begin{eqnarray*}
			\mathcal{Q}_{\boldsymbol{\Lambda}}(f \boldsymbol{\oplus}g) (\boldsymbol{\omega}) = e^{-i\sum_{k=1}^{N}(\sqrt{2}-1)e_k\omega_k} \mathcal{Q}_{\boldsymbol{\Lambda}}(f)(\frac{\boldsymbol{\omega}}{2}) \mathcal{Q}_{\boldsymbol{\Lambda}}(g)(\frac{\boldsymbol{\omega}}{2}), 
		\end{eqnarray*}
		and
		\begin{eqnarray*}
			\mathcal{Q}_{\boldsymbol{\Lambda}}\Big[e^{i\sum_{k=1}^{N}(\sqrt{2}-1)d_kx_k}f\big(\frac{\boldsymbol{x}}{\sqrt{2}}\big) g\big(\frac{\boldsymbol{x}}{\sqrt{2}}\big)\Big] (\boldsymbol{\omega}) = \big( \mathcal{Q}_{\boldsymbol{\Lambda}}(f) \boldsymbol{\ominus} \mathcal{Q}_{\boldsymbol{\Lambda}}(g)\big)(\boldsymbol{\omega}).
		\end{eqnarray*}
	\end{theorem}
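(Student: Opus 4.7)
The plan is to prove both identities by mechanical computation that parallels the one-dimensional argument underlying the preceding theorem, exploiting the fact that the multidimensional QPFT kernel, the prefactor constants, and every exponent appearing in the definitions of $\boldsymbol{\oplus}$, $\boldsymbol{\ominus}$ and $\mathcal{K}_{\boldsymbol{\Lambda}}$ all factor (as products or sums) over the coordinate index $k$. Consequently the entire computation reduces termwise to the 1D case already established in \cite{ta}.

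For the first identity, I would start by expanding
\begin{eqnarray*}
\mathcal{Q}_{\boldsymbol{\Lambda}}(f\boldsymbol{\oplus} g)(\boldsymbol{\omega}) = \int_{\mathbb{R}^N}\mathcal{K}_{\boldsymbol{\Lambda}}(\boldsymbol{\omega},\boldsymbol{x})(f\boldsymbol{\oplus} g)(\boldsymbol{x})\,d\boldsymbol{x},
\end{eqnarray*}
inserting the definition of $\boldsymbol{\oplus}$, and swapping the $\boldsymbol{x}$- and $\boldsymbol{\tau}$-integrals via Fubini (valid since $f,g\in L^1(\mathbb{R}^N)$). The key step is the change of variables $\boldsymbol{\lambda}=\sqrt{2}\boldsymbol{x}-\boldsymbol{\tau}$, with Jacobian $d\boldsymbol{x}=2^{-N/2}d\boldsymbol{\lambda}$: the combined quadratic exponent $a_k x_k^2+2a_k(x_k/\sqrt{2}-\tau_k)^2$ collapses coordinatewise into $a_k\lambda_k^2+a_k\tau_k^2$, the $d_k x_k+(\sqrt{2}-1)d_k x_k=\sqrt{2}d_k x_k$ combination splits symmetrically between $\lambda_k$ and $\tau_k$, and the bilinear $b_k x_k\omega_k$ term likewise distributes. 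After regrouping, the two factorised integrals are to be identified as $\mathcal{Q}_{\boldsymbol{\Lambda}}(f)(\boldsymbol{\omega}/2)$ and $\mathcal{Q}_{\boldsymbol{\Lambda}}(g)(\boldsymbol{\omega}/2)$, with the leftover $e_k\omega_k$ terms contributing exactly the claimed residual phase $e^{-i\sum_k(\sqrt{2}-1)e_k\omega_k}$.

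For the second identity, I would apply $\mathcal{Q}_{\boldsymbol{\Lambda}}$ directly to $e^{i\sum_k(\sqrt{2}-1)d_k x_k}f(\boldsymbol{x}/\sqrt{2})g(\boldsymbol{x}/\sqrt{2})$ and substitute the QPFT inversion representations of $f(\boldsymbol{x}/\sqrt{2})$ and $g(\boldsymbol{x}/\sqrt{2})$ via the componentwise extension of Definition \ref{def1}. Interchanging the order of integration and then rescaling via $\boldsymbol{y}=\boldsymbol{x}/\sqrt{2}$ should turn the outer $\boldsymbol{x}$-integral into precisely the integrand appearing in the definition of $\boldsymbol{\ominus}$ applied to $\mathcal{Q}_{\boldsymbol{\Lambda}}(f)$ and $\mathcal{Q}_{\boldsymbol{\Lambda}}(g)$. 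The role of the premultiplier $e^{i\sum_k(\sqrt{2}-1)d_k x_k}$ is to cancel spurious linear-in-$x_k$ terms generated by this scaling; its precise coefficient is dictated by this cancellation and is the dual of the residual phase appearing in the first identity.

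The main obstacle is purely algebraic bookkeeping: tracking the quadratic, bilinear and linear phase contributions from the kernel, the convolution/correlation operators, and the change of variables, and verifying that they reassemble correctly into the advertised form. There is no substantive analytic difficulty beyond the standard Fubini justification in $L^1(\mathbb{R}^N)$, and because every operation factors coordinatewise, the multidimensional identities follow as soon as the scalar identities are verified in each index $k$.
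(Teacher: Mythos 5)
Your plan for the first identity coincides with the paper's proof: expand $\mathcal{Q}_{\boldsymbol{\Lambda}}(f\boldsymbol{\oplus}g)$, apply Fubini, substitute $\boldsymbol{\epsilon}=\sqrt{2}\boldsymbol{x}-\boldsymbol{\tau}$, and watch the quadratic and linear phases recombine coordinatewise so that the double integral factors into two QPFTs times a residual phase in the $e_k$'s. (One remark: carrying out that computation actually produces $\mathcal{Q}_{\boldsymbol{\Lambda}}(f)(\boldsymbol{\omega}/\sqrt{2})\,\mathcal{Q}_{\boldsymbol{\Lambda}}(g)(\boldsymbol{\omega}/\sqrt{2})$, which is what the paper's own derivation ends with; the $\boldsymbol{\omega}/2$ in the theorem statement, which you reproduce, appears to be a typo inherited from the one-dimensional source.)

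For the second identity your route is reversed relative to the paper's: you start from $\mathcal{Q}_{\boldsymbol{\Lambda}}\big[e^{i\sum_k(\sqrt{2}-1)d_kx_k}f(\boldsymbol{x}/\sqrt{2})g(\boldsymbol{x}/\sqrt{2})\big]$ and push toward $\boldsymbol{\ominus}$, whereas the paper expands $\big(\mathcal{Q}_{\boldsymbol{\Lambda}}f\big)\boldsymbol{\ominus}\big(\mathcal{Q}_{\boldsymbol{\Lambda}}g\big)$ and collapses it to the left-hand side. The direction itself is immaterial, but your specific step of substituting the inversion representations of \emph{both} $f(\boldsymbol{x}/\sqrt{2})$ and $g(\boldsymbol{x}/\sqrt{2})$ is problematic: it produces a triple integral whose $\boldsymbol{x}$-integration has no absolutely convergent meaning and can only be evaluated by invoking $\int_{\mathbb{R}^N}e^{i\sum_k b_kx_k(\omega_k-(u_k+v_k)/\sqrt{2})}d\boldsymbol{x}$ as a Dirac delta, i.e., a purely formal manipulation outside the stated $L^1$ framework. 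The repair is to substitute the inversion for only one factor, say $f(\boldsymbol{x}/\sqrt{2})=\int_{\mathbb{R}^N}\mathcal{K}_{-\boldsymbol{\Lambda}}(\boldsymbol{u},\boldsymbol{x}/\sqrt{2})(\mathcal{Q}_{\boldsymbol{\Lambda}}f)(\boldsymbol{u})d\boldsymbol{u}$, interchange, and then recognize the remaining $\boldsymbol{x}$-integral (after $\boldsymbol{y}=\boldsymbol{x}/\sqrt{2}$) as $\mathcal{Q}_{\boldsymbol{\Lambda}}(g)(\sqrt{2}\boldsymbol{\omega}-\boldsymbol{u})$ multiplied by the chirp $e^{-2ic_k(\omega_k/\sqrt{2}-u_k)^2}$ and the phase $e^{-i(\sqrt{2}-1)e_k\omega_k}$, which is exactly the integrand of $\boldsymbol{\ominus}$; this single-inversion version (like the paper's own proof) still tacitly needs $\mathcal{Q}_{\boldsymbol{\Lambda}}f\in L^1(\mathbb{R}^N)$, a hypothesis neither you nor the paper states. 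With that adjustment your argument is sound and otherwise equivalent to the paper's.
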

	\begin{proof} By invoking Definitions \ref{def} and \ref{conv3} and carrying out the necessary computations, we obtain
		\begin{eqnarray*}
			&&\big( \mathcal{Q}_{\boldsymbol{\Lambda}}(f \boldsymbol{\oplus} g)(\boldsymbol{x})\big) (\boldsymbol{\omega}) 
			\\&=& \int_{\mathbb{R}^N} \mathcal{K}_{\boldsymbol{\Lambda}}(\boldsymbol{\omega},\boldsymbol{x}) \sqrt{\frac{\prod_{k=1}^{N}b_k}{(\pi i)^N}} \int_{\mathbb{R}^N} f(\boldsymbol{\tau}) g(\sqrt{2}\boldsymbol{x} - \boldsymbol{\tau}) e^{\sum_{k=1}^{N}2ia_k(\frac{x_k}{\sqrt{2}}- \tau_k)^2 + i (\sqrt{2}- 1)d_kx_k} d\boldsymbol{\tau} d\boldsymbol{x}
			\\&=& \sqrt{\frac{\prod_{k=1}^{N}b_k}{(\pi i)^N}} \sqrt{\frac{\prod_{k=1}^{N} b_k}{(2\pi i )^N}} \int_{\mathbb{R}^N} \int_{\mathbb{R}^N} e^{i \sum_{k=1}^{N}(a_k x_k^2 + b_k x_k \omega_k + c_k\omega_k^2 + d_k x_k + e_k \omega_k)} \\&& f(\boldsymbol{\tau}) g(\sqrt{2}\boldsymbol{x} - \boldsymbol{\tau}) e^{\sum_{k=1}^{N}2ia_k(\frac{x_k}{\sqrt{2}}- \tau_k)^2 + i (\sqrt{2}- 1)d_kx_k} d\boldsymbol{\tau} d\boldsymbol{x}.
		\end{eqnarray*} 
		Changing the variable $\sqrt{2}\boldsymbol{x} - \boldsymbol{\tau} = \boldsymbol{\epsilon}$, we get
		\begin{eqnarray*}
			&&\big( \mathcal{Q}_{\boldsymbol{\Lambda}}(f \boldsymbol{\oplus} g)(\boldsymbol{x})\big) (\boldsymbol{\omega}) 
			\\&=& \sqrt{\frac{\prod_{k=1}^{N}b_k}{(\pi i)^N}} \sqrt{\frac{\prod_{k=1}^{N} b_k}{(2\pi i )^N}} \int_{\mathbb{R}^N} \int_{\mathbb{R}^N} e^{i \sum_{k=1}^{N}(a_k \big(\frac{\tau_k^2}{2}+\frac{\epsilon_k^2}{2}+\tau_k\epsilon_k \big)+ b_k \big(\frac{\tau_k+\epsilon_k}{\sqrt{2}}\big) \omega_k + c_k\omega_k^2 + d_k \big(\frac{\tau_k+\epsilon_k}{\sqrt{2}}\big) + e_k \omega_k)} \\&& f(\boldsymbol{\tau}) g(\boldsymbol{\epsilon}) e^{\sum_{k=1}^{N}ia_k\big(\frac{\epsilon_k^2-2 \epsilon_k \tau_k + \tau_k^2}{2}\big) + i (\sqrt{2}- 1)d_k\big(\frac{\tau_k+\epsilon_k}{\sqrt{2}}\big)} d\boldsymbol{\tau} d\boldsymbol{\epsilon}\\
			&=& \small \sqrt{\frac{\prod_{k=1}^{N}b_k}{(\pi i)^N}} \sqrt{\frac{\prod_{k=1}^{N} b_k}{(2\pi i )^N}} e^{i\sum_{k=1}^{N}(1-\sqrt{2})e_k \omega_k} \int_{\mathbb{R}^N} e^{i \sum_{k=1}^{N}(a_k \tau_k^2 + b_k \big(\frac{\tau_k}{\sqrt{2}}\big) \omega_k+ c_k \big( \frac{\omega_k}{\sqrt{2}}\big)^2 + d_k\tau_k + e_k \frac{\omega_k}{\sqrt{2}})}f(\boldsymbol{\tau}) d\boldsymbol{\tau}\\&& \int_{\mathbb{R}^N}  e^{i \sum_{k=1}^{N} (a_k \epsilon_k^2 + b_k \big(\frac{\epsilon_k}{\sqrt{2}}\big) \omega_k  + c_k \big( \frac{\omega_k}{\sqrt{2}}\big)^2+ d_k\epsilon_k + e_k \frac{\omega_k}{\sqrt{2}})} g( \boldsymbol{\epsilon})  d\boldsymbol{\epsilon}\\
			&=& e^{i\sum_{k=1}^{N}(1-\sqrt{2})e_k \omega_k} \big( \mathcal{Q}_{\boldsymbol{\Lambda}}(f)\big) (\frac{\boldsymbol{\omega}}{\sqrt{2}}) \big( \mathcal{Q}_{\boldsymbol{\Lambda}}(g)\big) (\frac{\boldsymbol{\omega}}{\sqrt{2}}),
		\end{eqnarray*}
		which establishes the first part of the proof. We now proceed to the second part.
		\begin{eqnarray*}
			&& \big( \mathcal{Q}_{\boldsymbol{\Lambda}}(f) \boldsymbol{\ominus} \mathcal{Q}_{\boldsymbol{\Lambda}}(g)\big) (\boldsymbol{\omega}) 
			\\&=& \hspace{-0.1cm} \small \sqrt{\frac{i^N\prod_{k=1}^{N}b_k }{(\pi)^N}} \int_{\mathbb{R}^N} \mathcal{Q}_{\boldsymbol{\Lambda}}(f)(\boldsymbol{v}) \mathcal{Q}_{\boldsymbol{\Lambda}}(g)(\sqrt{2}\boldsymbol{\omega} - \boldsymbol{v})  e^{-(\sum_{k=1}^{N}2c_ki(\frac{\omega_k}{\sqrt{2}}- v_k)^2 + i (\sqrt{2}- 1)e_k\omega_k)} d\boldsymbol{v}\\
			&=& \sqrt{\frac{i^N\prod_{k=1}^{N}b_k }{(\pi)^N}} \sqrt{\frac{\prod_{k=1}^{N} b_k}{(2\pi i )^N}} \int_{\mathbb{R}^N} \int_{\mathbb{R}^N} e^{i \sum_{k=1}^{N}(a_k x_k^2 + b_k x_k v_k + c_k v_k^2 + d_k x_k + e_k v_k)}f(\boldsymbol{x})
			\\&& \times \mathcal{Q}_{\boldsymbol{\Lambda}}(g)(\sqrt{2}\boldsymbol{\omega} - \boldsymbol{v})  e^{-(\sum_{k=1}^{N}2c_ki(\frac{\omega_k}{\sqrt{2}}- v_k)^2 + i (\sqrt{2}- 1)e_k\omega_k)} d\boldsymbol{v} d\boldsymbol{x}.
		\end{eqnarray*}
		Changing the variable $\boldsymbol{\xi}= \sqrt{2} \boldsymbol{\omega}- \boldsymbol{v}$ to get
		\begin{eqnarray*}
			&&\big( \mathcal{Q}_{\boldsymbol{\Lambda}}(f) \boldsymbol{\ominus} \mathcal{Q}_{\boldsymbol{\Lambda}}(g)\big) (\boldsymbol{\omega}) 
			\\&=& \sqrt{\frac{\prod_{k=1}^{N} b_k}{(\pi i )^N}}  \int_{\mathbb{R}^N} \Big[ e^{i \sum_{k=1}^{N}(2 a_k x_k^2 + \sqrt{2}b_k x_k \omega_k+ c_k \omega_k^2 + 2 d_k x_k + e_k \omega_k)}f(\boldsymbol{x})
			\\&&\times \sqrt{\frac{i^N\prod_{k=1}^{N}b_k }{(2\pi)^N}} \int_{\mathbb{R}^N} \mathcal{Q}_{\boldsymbol{\Lambda}}(g)(\boldsymbol{\xi})  e^{-i \sum_{k=1}^{N} a_k x_k^2 + b_k \xi_k x_k + c_k \xi_k^2 +d_kx_k + e_k \omega_k} d\boldsymbol{\xi} \Big]  d\boldsymbol{x}\\
			&=& \sqrt{\frac{\prod_{k=1}^{N} b_k}{(\pi i )^N}}  \int_{\mathbb{R}^N} e^{i \sum_{k=1}^{N}(2 a_k x_k^2 + \sqrt{2}b_k x_k \omega_k+ c_k \omega_k^2 + 2 d_k x_k + e_k \omega_k)}f(\boldsymbol{x}) g(\boldsymbol{x}) d\boldsymbol{x}.
		\end{eqnarray*}
		Substituting $\boldsymbol{x} = \frac{\boldsymbol{t}}{\sqrt{2}}$ in the above equation gives
		\begin{eqnarray*}
			&&\big( \mathcal{Q}_{\boldsymbol{\Lambda}}(f) \ominus \mathcal{Q}_{\boldsymbol{\Lambda}}(g)\big) (\boldsymbol{\omega})
			\\ &=&  \hspace{-0.15cm}\small \sqrt{\frac{\prod_{k=1}^{N}b_k}{(2 \pi i)^N}} \int_{\mathbb{R}^N} e^{i\sum_{k=1}^{N}(a_kt_k^2+ b_k t_k \omega_k + c_k \omega_k^2 + d_k t_k +e_k \omega_k)} e^{i\sum_{k=1}^{N} (\sqrt{2}-1)d_k t_k} f\big(\frac{\boldsymbol{t}}{\sqrt{2}}\big) g\big(\frac{\boldsymbol{t}}{\sqrt{2}}\big) d\boldsymbol{t}
			\\&=& \mathcal{Q}_{\boldsymbol{\Lambda}} \big[ e^{i\sum_{k=1}^{N} (\sqrt{2}-1)d_k t_k} f\big(\frac{\boldsymbol{t}}{\sqrt{2}}\big) g\big(\frac{\boldsymbol{t}}{\sqrt{2}}\big)  \big].
		\end{eqnarray*}
		Thus, the proof is established.
	\end{proof}

	\subsection{Convolution: type 3}
	In this subsection, we introduce a more generalized version of convolution and establish corresponding convolution
	theorem in the multidimensional setting, which we use for proving the inversion
	theorem.
	\begin{definition}\label{def3}
		For $f \in L^1(\mathbb{R}^N) \cap L^2(\mathbb{R}^N)$ and $g \in L^1(\mathbb{R}^N)$, we define
		\begin{eqnarray*}
			(f {\otimes}_{\tiny \boldsymbol{\Lambda}, \lambda} ~g)(\boldsymbol{x}) &=& |\lambda|^\frac{N}{2} C_{\boldsymbol{\Lambda}}  e^{a, d}_{-\lambda^2}(\boldsymbol{x}) \big[ (e^{a, d}_{\lambda^2}f) * (e^{a, d}_{\lambda^2}g) \big] (\boldsymbol{x}), 
		\end{eqnarray*}
		where $e^{a, d}_{-\lambda^2}(\boldsymbol{x})$ and $C_{\boldsymbol{\Lambda}}$ are as in equations \eqref{ep} and \eqref{C}, respectively.
	\end{definition}
	Based on the given definition, we now formulate the convolution theorem.
	\begin{theorem}\label{conv2}
		Let $f, g \in L^1(\mathbb{R}^N)$ and $\mathcal{Q}_{\boldsymbol{\Lambda'}}f, \mathcal{Q}_{\boldsymbol{\Lambda'}}g \in L^1(\mathbb{R}^N)$, where $\boldsymbol{\Lambda'}=\{ \Lambda_1', \Lambda_2', . . . , \Lambda_N'\} \text{ and } \Lambda_k' = (a_k', b_k', c_k', d_k', e_k')$ with $a_k'= \lambda^2 a_k$,
		$b_k'= \lambda^2 b_k$, $c_k'= \lambda^2 c_k$, $d_k'= \lambda^2 d_k$, $e_k'= \lambda^2 e_k$. Then
		\begin{eqnarray*}
			\mathcal{Q}_{\boldsymbol{\Lambda'}} (f \otimes_{\tiny \boldsymbol{\Lambda}, \lambda} g)(\boldsymbol{\omega}) &=& e^{c, e}_{-\lambda^2}(\boldsymbol{\omega}) \mathcal{Q}_{\boldsymbol{\Lambda'}}(g)(\boldsymbol{\omega})\mathcal{Q}_{\boldsymbol{\Lambda'}}(f)(\boldsymbol{\omega}).
		\end{eqnarray*}
		
	\end{theorem}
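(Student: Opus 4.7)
My plan is to reduce the multidimensional QPFT to an ordinary multidimensional Fourier transform via the identity
\begin{eqnarray*}
(\mathcal{Q}_{\boldsymbol{\Lambda'}} h)(\boldsymbol{\omega}) = C_{\boldsymbol{\Lambda'}}\, e_{1}^{\boldsymbol{c', e'}}(\boldsymbol{\omega})\, \widehat{\bigl(e_{1}^{\boldsymbol{a', d'}} h\bigr)}(\boldsymbol{b'\omega})
\end{eqnarray*}
recorded in the preliminaries, and then appeal to the classical convolution theorem for the Fourier transform. Since $\Lambda_k'=\lambda^{2}\Lambda_k$ coordinatewise, the first step is to translate the primed data back into the unprimed notation: $e_{1}^{\boldsymbol{a', d'}}=e_{\lambda^{2}}^{\boldsymbol{a, d}}$, $e_{1}^{\boldsymbol{c', e'}}=e_{\lambda^{2}}^{\boldsymbol{c, e}}$, $C_{\boldsymbol{\Lambda'}}=|\lambda|^{N}C_{\boldsymbol{\Lambda}}$, and $\boldsymbol{b'\omega}=\lambda^{2}\boldsymbol{b\omega}$, so that
\begin{eqnarray*}
(\mathcal{Q}_{\boldsymbol{\Lambda'}} h)(\boldsymbol{\omega}) = |\lambda|^{N}C_{\boldsymbol{\Lambda}}\, e_{\lambda^{2}}^{\boldsymbol{c, e}}(\boldsymbol{\omega})\, \widehat{\bigl(e_{\lambda^{2}}^{\boldsymbol{a, d}} h\bigr)}(\lambda^{2}\boldsymbol{b\omega}).
\end{eqnarray*}

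Next I would plug $h=f\otimes_{\boldsymbol{\Lambda},\lambda}g$ into this formula. The crucial algebraic cancellation is that, by Definition~\ref{def3}, the factor $e_{-\lambda^{2}}^{\boldsymbol{a, d}}(\boldsymbol{x})$ sitting in front of the inner convolution is killed when we multiply by $e_{\lambda^{2}}^{\boldsymbol{a, d}}(\boldsymbol{x})$, since $e_{-\lambda^{2}}^{\boldsymbol{a, d}}=e_{\lambda^{2}}^{-\boldsymbol{a, d}}$ and these characters compose additively in the exponent. This yields
\begin{eqnarray*}
e_{\lambda^{2}}^{\boldsymbol{a, d}}\,(f\otimes_{\boldsymbol{\Lambda},\lambda}g) = |\lambda|^{N/2}\,C_{\boldsymbol{\Lambda}}\,\bigl[(e_{\lambda^{2}}^{\boldsymbol{a, d}}f)\star(e_{\lambda^{2}}^{\boldsymbol{a, d}}g)\bigr],
\end{eqnarray*}
and the standard multidimensional Fourier convolution theorem $\widehat{u\star v}=\hat u\,\hat v$ then separates the product of the two QPFT-relevant pieces.

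The last step is purely cosmetic: evaluate the two separated Fourier transforms at the point $\lambda^{2}\boldsymbol{b\omega}$, invert the identity above to recognize each as $(\mathcal{Q}_{\boldsymbol{\Lambda'}}f)(\boldsymbol{\omega})/\bigl(|\lambda|^{N}C_{\boldsymbol{\Lambda}}\,e_{\lambda^{2}}^{\boldsymbol{c, e}}(\boldsymbol{\omega})\bigr)$ and the analogous expression for $g$, and collect the prefactors. One power of $e_{\lambda^{2}}^{\boldsymbol{c, e}}(\boldsymbol{\omega})$ will survive in the denominator, which by definition is $e_{-\lambda^{2}}^{\boldsymbol{c, e}}(\boldsymbol{\omega})$, producing exactly the stated right-hand side.

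The conceptual content is short; the main obstacle is purely bookkeeping: tracking the interaction between the dilation factor $|\lambda|^{N}$ coming from $C_{\boldsymbol{\Lambda'}}$, the prefactor $|\lambda|^{N/2}C_{\boldsymbol{\Lambda}}$ baked into Definition~\ref{def3}, and the two copies of $e_{\lambda^{2}}^{\boldsymbol{c,e}}$ produced when $\mathcal{Q}_{\boldsymbol{\Lambda'}}$ is inverted for $f$ and $g$ separately. Doing this carefully, so that exactly one factor of $e_{-\lambda^{2}}^{\boldsymbol{c,e}}(\boldsymbol{\omega})$ remains and the $|\lambda|$ powers cancel, is where I would invest the most care; once the accounting is right, there is no further analytic difficulty because $f,g\in L^{1}(\mathbb{R}^{N})$ ensures the inner convolution is well-defined and Fubini's theorem is implicitly justified by the classical $L^{1}$ convolution theorem.
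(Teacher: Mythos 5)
Your proposal is correct in outline but follows a genuinely different route from the paper. The paper proves Theorem \ref{conv2} by direct computation: it expands $\mathcal{Q}_{\boldsymbol{\Lambda'}}(f\otimes_{\boldsymbol{\Lambda},\lambda}g)$ as a double integral against the quadratic-phase kernel, applies Fubini's theorem, substitutes $\boldsymbol{u}=\boldsymbol{x}-\boldsymbol{\tau}$, and recognizes the inner and outer integrals as $\mathcal{Q}_{\boldsymbol{\Lambda'}}(g)$ and $e^{\boldsymbol{c,e}}_{-\lambda^2}(\boldsymbol{\omega})\mathcal{Q}_{\boldsymbol{\Lambda'}}(f)$. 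You instead factor $\mathcal{Q}_{\boldsymbol{\Lambda'}}$ through the ordinary multidimensional Fourier transform and invoke $\widehat{u\star v}=\hat u\,\hat v$; this is conceptually cleaner, pushes the analytic content (Fubini) into the classical $L^1$ convolution theorem, and makes transparent why the chirp $e^{\boldsymbol{a,d}}_{-\lambda^2}$ in Definition \ref{def3} is exactly the right prefactor. The two arguments are the same change of variables in different clothing, so your reduction is a legitimate alternative proof.

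One warning about the bookkeeping you yourself flag as the delicate part: against Definition \ref{def3} as literally stated, the constants do \emph{not} all cancel. The definition carries the prefactor $|\lambda|^{N/2}C_{\boldsymbol{\Lambda}}$, while inverting the identity for $\mathcal{Q}_{\boldsymbol{\Lambda'}}$ contributes $(|\lambda|^{N}C_{\boldsymbol{\Lambda}})^{-1}$ for each of $f$ and $g$ and $|\lambda|^{N}C_{\boldsymbol{\Lambda}}$ for the outer transform, so the net constant is
\begin{equation*}
\frac{|\lambda|^{N}C_{\boldsymbol{\Lambda}}\cdot|\lambda|^{N/2}C_{\boldsymbol{\Lambda}}}{\left(|\lambda|^{N}C_{\boldsymbol{\Lambda}}\right)^{2}}=|\lambda|^{-N/2},
\end{equation*}
and your method yields the stated identity only up to this factor. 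This is not a defect of your approach but an inconsistency already present in the paper: its own proof silently replaces the definition's prefactor $|\lambda|^{N/2}C_{\boldsymbol{\Lambda}}$ by $\sqrt{\prod_{k=1}^{N}b_k'/(i)^N}=C_{\boldsymbol{\Lambda'}}=|\lambda|^{N}C_{\boldsymbol{\Lambda}}$ in its first display, and with that normalization everything cancels. Your reduction actually exposes this slip; just state explicitly that the theorem as written requires the convolution in Definition \ref{def3} to be normalized by $|\lambda|^{N}C_{\boldsymbol{\Lambda}}$ rather than $|\lambda|^{N/2}C_{\boldsymbol{\Lambda}}$ (or accept the extra factor $|\lambda|^{-N/2}$ in the conclusion).
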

	\begin{proof} 
		From Definition \ref{def3} and equations \eqref{ep} and \eqref{C}, it follows that 
		\begin{eqnarray*}
			&& \mathcal{Q}_{\boldsymbol{\Lambda'}} (f \otimes_{\tiny \boldsymbol{\Lambda}, \lambda} g)(\boldsymbol{\omega}) 
			\\&=&  \mathcal{Q}_{\boldsymbol{\Lambda'}} \bigg(\sqrt{\frac{\prod_{k=1}^{N} b_k'}{(i)^N}} e^{-i (\sum_{k=1}^{N} a_k' x_k^2+d_k' x_k)} \Big(\big(f(\boldsymbol{x})e^{i (\sum_{k=1}^{N} a_k' x_k^2+ d_k' x_k)}\big) \star \big(g(\boldsymbol{x})e^{i(\sum_{k=1}^{N} a_k' x_k^2 + d_k' x_k)} \big)\Big)\bigg)(\boldsymbol{\omega})
			\\&=&\sqrt{\frac{\prod_{k=1}^{N} b_k'}{(2\pi i )^N}} \int_{\mathbb{R}^N} e^{i \sum_{k=1}^{N}(a_k' x_k^2 + b_k' x_k \omega_k + c_k' \omega_k^2 + d_k' x_k + e_k' \omega_k)} \bigg(\sqrt{\frac{\prod_{k=1}^{N} b_k'}{(2\pi i)^N}} e^{-i  (\sum_{k=1}^{N} a_k' x_k^2 + d_k' x_k)} \\&&\hspace{-0.3cm} \times  \Big( \int_{\mathbb{R}^N} f(\boldsymbol{\tau})e^{i (\sum_{k=1}^{N} a_k' \tau_k^2 + d_k' \tau_k)} g(\boldsymbol{x - \tau})e^{i (\sum_{k=1}^{N} a_k' (x_k- \tau_k)^2 + d_k' (x_k - \tau_k))}
			d\boldsymbol{\tau} \Big)\bigg) d\boldsymbol{x}.
		\end{eqnarray*}
		Applying Fubini’s theorem, we can further simplify,
		\begin{eqnarray*}
			&& \mathcal{Q}_{\boldsymbol{\Lambda'}} \big((f \otimes_{\tiny \boldsymbol{\Lambda}, \lambda} g)(\boldsymbol{x})\big)(\boldsymbol{\omega}) \\&=&\sqrt{\frac{\prod_{k=1}^{N} b_k'}{(2\pi i )^N}} e^{i \sum_{k=1}^{N}(c_k'\omega_k^2 + e_k' \omega_k)} \int_{\mathbb{R}^N}  f(\boldsymbol{\tau})e^{i(\sum_{k=1}^{N} a_k' \tau_k^2 + d_k' \tau_k)} \\&&\times \bigg(\sqrt{\frac{\prod_{k=1}^{N} b_k'}{(2\pi i)^N}} \Big( \int_{\mathbb{R}^N} e^{i\sum_{k=1}^{N} (b_k' x_k \omega_k)}  g(\boldsymbol{x - \tau})e^{i (\sum_{k=1}^{N} a_k' (x_k- \tau_k)^2 + d_k'(x_k - \tau_k))}
			\Big) d\boldsymbol{x} \bigg) d\boldsymbol{\tau}.
		\end{eqnarray*}
		With the substitution $\boldsymbol{x} - \boldsymbol{\tau} = \boldsymbol{u}$, we get
		\begin{eqnarray*}
			&& \mathcal{Q}_{\boldsymbol{\Lambda}'} (f \otimes g)(\boldsymbol{\omega})
			\Big) d\boldsymbol{u} \bigg) d\boldsymbol{\tau}
			\\&=& \sqrt{\frac{\prod_{k=1}^{N} b_k'}{(2\pi i )^N}}  \int_{\mathbb{R}^N}  f(\boldsymbol{\tau})e^{i(\sum_{k=1}^{N} a_k' \tau_k^2 + b_k' \tau_k \omega_k+ d_k' \tau_k)} \\&&\times \bigg(\sqrt{\frac{\prod_{k=1}^{N} b_k'}{(2\pi i)^N}} \Big( \int_{\mathbb{R}^N} e^{i \sum_{k=1}^{N} (a_k' u_k^2+ b_k' u_k \omega_k + c_k'\omega_k^2 +  d_k' u_k + e_k' \omega_k)}  g(\boldsymbol{u})
			\Big) d\boldsymbol{u} \bigg)d\boldsymbol{\tau}
			\\&=& \sqrt{\frac{\prod_{k=1}^{N} b_k'}{(2\pi i )^N}}  \int_{\mathbb{R}^N}  f(\boldsymbol{\tau})e^{i(\sum_{k=1}^{N} a_k' \tau_k^2 + b_k' \tau_k \omega_k+ d_k' \tau_k)} \mathcal{Q}_{\boldsymbol{\Lambda'}}(g)(\boldsymbol{\omega})d\boldsymbol{\tau}
			\\&=& e^{-i\sum_{k=1}^{N}(c_k' \omega_k^2 + e_k' \omega_k)}\mathcal{Q}_{\boldsymbol{\Lambda'}}(g)(\boldsymbol{\omega}) \mathcal{Q}_{\boldsymbol{\Lambda'}}(f)(\boldsymbol{\omega})
			\\&=& e_{-\lambda^2}^{c,e}(\boldsymbol{\omega}) \mathcal{Q}_{\boldsymbol{\Lambda'}}(g)(\boldsymbol{\omega}) \mathcal{Q}_{\boldsymbol{\Lambda'}}(f)(\boldsymbol{\omega}).
		\end{eqnarray*}
		This concludes the proof.
	\end{proof}
	\begin{definition} \label{def4}
		Let $f \in L^1(\mathbb{R}^N) \cap L^2(\mathbb{R}^N)$, $\boldsymbol{t} \in \mathbb{R}^N$ and $\boldsymbol{\omega} \in (\mathbb{R} \setminus \{0\})^N$. Define $(T_{\boldsymbol{t}} f)(\boldsymbol{x}) = f(\boldsymbol{x} - \boldsymbol{t})$ and $
		E_{\lambda^2}^{a \boldsymbol{t}} (\boldsymbol{x}) =e^{2i \lambda^2 (\sum_{k=1}^{N} a_k x_k t_k)}, E_{\lambda^2}^{d} (\boldsymbol{t})= e^{2i \lambda^2 (\sum_{k=1}^{N} d_k t_k)} .$
	\end{definition}
	With these notations in place, we now state some properties of the previously defined convolution \ref{def3}.
	\begin{theorem}
		Let the functions $f$, $f_1$ and $f_2$ be in $L^p(\mathbb{R}^N)$ and $g$, $g_1$ and $g_2$ be in $L^1(\mathbb{R}^N)$ and $\gamma$ be a complex number, $\omega$ be a non-zero real number and $t \in \mathbb{R}^N$. Then
		\begin{enumerate}
			\item $(f_1 + f_2) \otimes_{\boldsymbol{\Lambda}, \lambda} h = (f_1 \otimes_{\boldsymbol{\Lambda}, \lambda} h) + (f_2 \otimes_{\boldsymbol{\Lambda}, \lambda} h)$; distributivity,
			\vspace{0.15cm}
			\item $\gamma (f \otimes_{\boldsymbol{\Lambda}, \lambda} g) = (\gamma f) \otimes_{\boldsymbol{\Lambda},\lambda} g = f \otimes_{\boldsymbol{\Lambda},\lambda} (\gamma g)$; scalar multiplication,
			\vspace{0.15cm}
			\item $ [f \otimes_{\boldsymbol{\Lambda}, \lambda} (g_1 \otimes_{\boldsymbol{\Lambda}, \lambda} g_2)] = [(f \otimes_{\boldsymbol{\Lambda}, \lambda} g_1) \otimes_{\boldsymbol{\Lambda}, \lambda} g_2]$; associativity,
			\vspace{0.15cm}
			\item $f \otimes_{\boldsymbol{\Lambda}, \lambda} g = g \otimes_{\boldsymbol{\Lambda}, \lambda} f$; commutativity ,
			\vspace{0.15cm}
			\item $\tau_{\boldsymbol{t}} (f \otimes_{\boldsymbol{\Lambda}, \lambda} g) (\boldsymbol{x}) =\big[(\tau_{\boldsymbol{t}} f) \otimes_{\boldsymbol{\Lambda}, \lambda} (E_{\lambda^2}^{a\boldsymbol{t}} g)\big] (\boldsymbol{x}).$
			\vspace{0.15cm}
			\item Define a function $P_k: \mathbb{R}^N \to \mathbb{R}$ by $P_k(\boldsymbol{x})= x_k$, $k= 1,2, . . . , N.$
			\vspace{0.15cm}
			\begin{enumerate}
				\item If $P_kf, \partial_{x_k}f  \in L^1(\mathbb{R}^N)$ or $L^2(\mathbb{R}^N)$, 
				then
				\begin{eqnarray*}
					\partial_{x_k}(f \otimes_{\boldsymbol{\Lambda}, \lambda} g) (\boldsymbol{x}) &=&  2a_k i\lambda^2 \big[( P_k f \otimes_{\boldsymbol{\Lambda}, \lambda} g)(\boldsymbol{x}) - P_k(f \otimes_{\boldsymbol{\Lambda}, \lambda} g)(\boldsymbol{x})\big] 
					\\&& + ( \partial_{x_k}f \otimes_{\boldsymbol{\Lambda}, \lambda} g)(\boldsymbol{x}).
				\end{eqnarray*}
				\item If $P_k g$ and $\partial_{x_k} g$ are in $L^1(\mathbb{R}^N)$, then
				\begin{eqnarray*}
					\partial_{x_k}(f \otimes_{\boldsymbol{\Lambda}, \lambda} g) (\boldsymbol{x}) &=&  2a_k i\lambda^2 \big[( P_k f \otimes_{\boldsymbol{\Lambda}, \lambda} g)(\boldsymbol{x}) - P_k(f \otimes_{\boldsymbol{\Lambda}, \lambda} g)(\boldsymbol{x})\big] 
					\\&& +  ( \partial_{x_k}f \otimes_{\boldsymbol{\Lambda}, \lambda} g)(\boldsymbol{x}).
				\end{eqnarray*}
			\end{enumerate}
		\end{enumerate}
	\end{theorem}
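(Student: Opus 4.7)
The plan is to reduce every property to a statement about the classical Fourier convolution $\star$ via Definition \ref{def3}, and then to invoke standard algebraic and analytic properties of $\star$ on $L^p(\mathbb{R}^N)$ together with pointwise identities for the quadratic-phase multipliers $e^{a,d}_{\pm\lambda^2}$. In every computation the prefactor $|\lambda|^{N/2}C_{\boldsymbol{\Lambda}}\,e^{a,d}_{-\lambda^2}(\boldsymbol{x})$ is pulled out and attention is focused on the twisted classical convolution $\bigl[(e^{a,d}_{\lambda^2}f)\star(e^{a,d}_{\lambda^2}g)\bigr](\boldsymbol{x})$.

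Properties (1), (2) and (4) are essentially bookkeeping: the map $h \mapsto e^{a,d}_{\lambda^2}h$ is linear and $\star$ is bilinear and commutative on $L^1(\mathbb{R}^N)$, so distributivity, scalar linearity, and commutativity follow by inspection. For associativity (3), the key cancellation is
\[
e^{a,d}_{\lambda^2}(\boldsymbol{x})\,(g_1 \otimes_{\boldsymbol{\Lambda},\lambda} g_2)(\boldsymbol{x}) \;=\; |\lambda|^{N/2}C_{\boldsymbol{\Lambda}}\bigl[(e^{a,d}_{\lambda^2}g_1)\star(e^{a,d}_{\lambda^2}g_2)\bigr](\boldsymbol{x}),
\]
by which the inner $e^{a,d}_{-\lambda^2}$ kills the outer $e^{a,d}_{\lambda^2}$ multiplier. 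Substituting this into both bracketings reduces $f \otimes_{\boldsymbol{\Lambda},\lambda}(g_1 \otimes_{\boldsymbol{\Lambda},\lambda} g_2)$ and $(f \otimes_{\boldsymbol{\Lambda},\lambda} g_1) \otimes_{\boldsymbol{\Lambda},\lambda} g_2$ to the common expression $|\lambda|^{N}C_{\boldsymbol{\Lambda}}^{2}\,e^{a,d}_{-\lambda^2}(\boldsymbol{x})\,\bigl[(e^{a,d}_{\lambda^2}f)\star(e^{a,d}_{\lambda^2}g_1)\star(e^{a,d}_{\lambda^2}g_2)\bigr](\boldsymbol{x})$, and associativity of $\star$ finishes the argument.

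For (5) I expand $(x_k-t_k)^2$ and $(x_k-t_k)$ to obtain the shift identity
\[
e^{a,d}_{\lambda^2}(\boldsymbol{x}-\boldsymbol{t}) \;=\; e^{a,d}_{\lambda^2}(\boldsymbol{x})\,\overline{E^{a\boldsymbol{t}}_{\lambda^2}(\boldsymbol{x})}\,K(\boldsymbol{t}),\qquad K(\boldsymbol{t}) := e^{i\lambda^2\sum_{k}(a_kt_k^2 - d_kt_k)},
\]
together with the analogous identity for $e^{a,d}_{-\lambda^2}$, which produces $K(\boldsymbol{t})^{-1}$. Applying $\tau_{\boldsymbol{t}}$ to $f \otimes_{\boldsymbol{\Lambda},\lambda} g$ and using the translation invariance $\tau_{\boldsymbol{t}}(\phi \star \psi) = (\tau_{\boldsymbol{t}}\phi)\star\psi$, a factor $K(\boldsymbol{t})$ emerges from $e^{a,d}_{\lambda^2}(\boldsymbol{\sigma}-\boldsymbol{t})$ inside the convolution integral and a factor $K(\boldsymbol{t})^{-1}$ from the outer $e^{a,d}_{-\lambda^2}(\boldsymbol{x}-\boldsymbol{t})$; these cancel, and the surviving $E^{a\boldsymbol{t}}_{\lambda^2}(\boldsymbol{x})$ is precisely the extra modulation of $g$ that appears on the right-hand side of the claim.

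Finally, for (6) I apply the Leibniz rule to $e^{a,d}_{-\lambda^2}(\boldsymbol{x})\bigl[(e^{a,d}_{\lambda^2}f)\star(e^{a,d}_{\lambda^2}g)\bigr](\boldsymbol{x})$. Differentiating $e^{a,d}_{-\lambda^2}$ in $x_k$ supplies the factor $-i\lambda^2(2a_kx_k+d_k)$ and contributes $-2ia_k\lambda^2 P_k(f \otimes_{\boldsymbol{\Lambda},\lambda} g) - i\lambda^2 d_k(f \otimes_{\boldsymbol{\Lambda},\lambda} g)$. Passing the remaining $\partial_{x_k}$ onto the factor with the required regularity and using $\partial_{x_k}(e^{a,d}_{\lambda^2}f) = e^{a,d}_{\lambda^2}\bigl[2i\lambda^2 a_k P_k f + i\lambda^2 d_k f + \partial_{x_k}f\bigr]$ gives a further $2ia_k\lambda^2(P_kf \otimes_{\boldsymbol{\Lambda},\lambda} g) + i\lambda^2 d_k(f \otimes_{\boldsymbol{\Lambda},\lambda} g) + (\partial_{x_k}f \otimes_{\boldsymbol{\Lambda},\lambda} g)$; the two $i\lambda^2 d_k(f \otimes g)$ terms cancel and the identity follows (the case where the derivative is transferred to $g$ is symmetric and can also be derived from (4)). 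The genuine obstacle, concentrated in (5) and (6), is that $e^{a,d}_{-\lambda^2}$ is neither translation-invariant nor a character on $\mathbb{R}^N$: both arguments rely on an exact cancellation of independently produced phase terms, namely the constants $K(\boldsymbol{t})$ in (5) and the $i\lambda^2 d_k$ linear corrections in (6), and one must verify these cancellations are structural rather than accidental.
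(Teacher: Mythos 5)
Your proposal is correct and follows essentially the same route as the paper: items (1)--(4) are dispatched by the bilinearity, commutativity and associativity of the classical convolution (the paper omits these details too), item (5) rests on the same translation identity for $e^{a,d}_{\pm\lambda^2}$ with the same cancellation of the $t$-dependent constant and transfer of the surviving modulation $E^{a\boldsymbol{t}}_{\lambda^2}$ onto $g$, and item (6) is the same Leibniz-rule computation with the cancellation of the $i\lambda^2 d_k$ terms. No substantive differences to report.
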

	\begin{proof}
		The proofs of statements (1) to (4) follow directly from the corresponding results in the classical convolution framework, requiring only minor modifications for the generalized structure. Therefore, we omit the detailed proofs.
 		\newline
 		\newline
		(5) To begin, we observe that
		\begin{eqnarray} \label{tau}
			\tau_{\boldsymbol{t}}(e^{\boldsymbol{a}, \boldsymbol{d}}_{-\lambda^2}(\boldsymbol{x})) 
			&=& e^{-i \lambda^2 (\sum_{k=1}^{N} a_k (x_k- t_k)^2+d_k (x_k - t_k))} \nonumber\\
			&=& e^{-i \lambda^2 (\sum_{k=1}^{N} a_k x_k^2 +d_k x_k + a_k t_k^2 +d_k t_k  - 2(a_k x_k t_k + d_k t_k )} \nonumber\\
			&=& e^{-i \lambda^2 (\sum_{k=1}^{N} a_k x_k^2 + d_k x_k)} e^{-i \lambda^2 (\sum_{k=1}^{N} a_k t_k^2 + d_k t_k)} e^{2i \lambda^2 (\sum_{k=1}^{N} a_k x_k t_k)} e^{2i \lambda^2 (\sum_{k=1}^{N} d_k t_k)} \nonumber \\
			&=& e^{\boldsymbol{a}, \boldsymbol{d}}_{-\lambda^2}(\boldsymbol{x}) e^{\boldsymbol{a}, \boldsymbol{d}}_{-\lambda^2}(\boldsymbol{t}) E_{\lambda^2}^{\boldsymbol{at}} (\boldsymbol{x})  E_{\lambda^2}^{\boldsymbol{d}} (\boldsymbol{t}).
		\end{eqnarray}
		Thus by using the identity $\tau_{\boldsymbol{t}}(f * g) = (\tau_{\boldsymbol{t}} f) * g$, we obtain
		\begin{eqnarray*}
			\tau_{\boldsymbol{t}} ( f \otimes_{\boldsymbol{\Lambda}, \lambda} g)(\boldsymbol{x})
			&=& |\lambda|^\frac{N}{2} C_{\boldsymbol{\Lambda}}  \tau_{\boldsymbol{t}}(e^{\boldsymbol{a}, \boldsymbol{d}}_{-\lambda^2}(\boldsymbol{x})) \big( \tau_{\boldsymbol{t}}(e^{\boldsymbol{a}, \boldsymbol{d}}_{\lambda^2}f) * (e^{\boldsymbol{a}, \boldsymbol{d}}_{\lambda^2}g) \big) (\boldsymbol{x}).
		\end{eqnarray*}
		Now applying \eqref{tau} to the preceding equation yields
		\begin{eqnarray*}
			\tau_{\boldsymbol{t}} ( f \otimes_{\boldsymbol{\Lambda}, \lambda} g)(\boldsymbol{x}) 
			&=& |\lambda|^\frac{N}{2} C_{\boldsymbol{\Lambda}}   e^{\boldsymbol{a}, \boldsymbol{d}}_{-\lambda^2}(\boldsymbol{x}) e^{\boldsymbol{a}, \boldsymbol{d}}_{-\lambda^2}(\boldsymbol{t}) E_{\lambda^2}^{\boldsymbol{a}\boldsymbol{t}} (\boldsymbol{x})  E_{\lambda^2}^{\boldsymbol{d}} (\boldsymbol{t}) \\&& \times \Big(  e^{\boldsymbol{a}, \boldsymbol{d}}_{\lambda^2}(\boldsymbol{t}) E_{-\lambda^2}^{\boldsymbol{d}} (\boldsymbol{t}) e^{\boldsymbol{a}, \boldsymbol{d}}_{\lambda^2} E_{-\lambda^2}^{\boldsymbol{a}\boldsymbol{t}} \tau_{\boldsymbol{t}}(f) * (e^{\boldsymbol{a}, \boldsymbol{d}}_{\lambda^2}g) \Big) (\boldsymbol{x})
			\\&=& |\lambda|^\frac{N}{2} C_{\boldsymbol{\Lambda}}   e^{\boldsymbol{a}, \boldsymbol{d}}_{-\lambda^2}(\boldsymbol{x}) E_{\lambda^2}^{\boldsymbol{a} \boldsymbol{t}} (\boldsymbol{x}) \Big(e^{\boldsymbol{a}, \boldsymbol{d}}_{\lambda^2} E_{-\lambda^2}^{\boldsymbol{a}\boldsymbol{t}} \tau_{\boldsymbol{t}}(f) * (e^{\boldsymbol{a}, \boldsymbol{d}}_{\lambda^2}g) \Big) (\boldsymbol{x})
			\\&=& |\lambda|^\frac{N}{2} C_{\boldsymbol{\Lambda}}   e^{\boldsymbol{a}, \boldsymbol{d}}_{-\lambda^2}(\boldsymbol{x})  \Big(e^{\boldsymbol{a}, \boldsymbol{d}}_{\lambda^2} \tau_{\boldsymbol{t}}(f) * (e^{\boldsymbol{a}, \boldsymbol{d}}_{\lambda^2}  E_{\lambda^2}^{\boldsymbol{a}\boldsymbol{t}} g) \Big) (\boldsymbol{x})
			\\&=& \big[(\tau_{\boldsymbol{t}} f) \otimes_{\boldsymbol{\Lambda}, \lambda} (E_{\lambda^2}^{\boldsymbol{a}\boldsymbol{t}} g)\big] (\boldsymbol{x}).
		\end{eqnarray*}
		Having established this result, we now proceed to the next part.
		\newline
		\newline
		(6) (a) Let $f \in L^p(\mathbb{R}^N)$ and $g \in L^1(\mathbb{R}^N)$. By the convolution \ref{def3}, we get 
		\begin{eqnarray*}
			&&\partial_{x_k}( f \otimes_{\boldsymbol{\Lambda}, \lambda} g)(\boldsymbol{x}) 
			\\&=& |\lambda|^\frac{N}{2} C_{\boldsymbol{\Lambda}}   \partial_{x_k} \big(e^{a, d}_{-\lambda^2}(\boldsymbol{x})\big) \big( (e^{a, d}_{\lambda^2}f) * (e^{a, d}_{\lambda^2}g) \big)(\boldsymbol{x}) +  |\lambda|^\frac{N}{2} C_{\boldsymbol{\Lambda}}  e^{a, d}_{-\lambda^2}(\boldsymbol{x}) \partial_{x_k} \big( (e^{a, d}_{\lambda^2}f) * (e^{a, d}_{\lambda^2}g) \big)(\boldsymbol{x})
		\end{eqnarray*}
		Using the differentiation property of convolution,  
		\begin{eqnarray*}
			\partial_{x_k} (f * g)(\boldsymbol{x}) = (\partial_{x_k} f * g)(\boldsymbol{x}),
		\end{eqnarray*}it follows that
		\begin{eqnarray*}	
			&&\partial_{x_k}( f \otimes_{\boldsymbol{\Lambda}, \lambda} g)(\boldsymbol{x})\\&=& |\lambda|^\frac{N}{2} C_{\boldsymbol{\Lambda}}   \partial_{x_k} \big(e^{a, d}_{-\lambda^2}(\boldsymbol{x})\big) \big( (e^{a, d}_{\lambda^2}f) * (e^{a, d}_{\lambda^2}g) \big)(\boldsymbol{x}) +  |\lambda|^\frac{N}{2} C_{\boldsymbol{\Lambda}}  e^{a, d}_{-\lambda^2}(\boldsymbol{x})  \big( \partial_{x_k}(e^{a, d}_{\lambda^2}f) * (e^{a, d}_{\lambda^2}g) \big)(\boldsymbol{x})
			\\&=& |\lambda|^\frac{N}{2} C_{\boldsymbol{\Lambda}}   (-i\lambda^2)(2a_kx_k+ d_k) \big(e^{a, d}_{-\lambda^2}(\boldsymbol{x})\big) \big( (e^{a, d}_{\lambda^2}f) * (e^{a, d}_{\lambda^2}g) \big)(\boldsymbol{x}) 
			\\&&+  |\lambda|^\frac{N}{2} C_{\boldsymbol{\Lambda}}  e^{a, d}_{-\lambda^2}(\boldsymbol{x})  \big[ (i\lambda^2)(2a_kx_k+ d_k) \big(e^{a, d}_{-\lambda^2}(\boldsymbol{x})\big) f + \big(e^{a, d}_{-\lambda^2}(\boldsymbol{x})\big) (\partial_{x_k}f) \big]  * (e^{a, d}_{\lambda^2}g) \big)(\boldsymbol{x})
			\\&=&  |\lambda|^\frac{N}{2} C_{\boldsymbol{\Lambda}}   (i\lambda^2) \Big[e^{a, d}_{-\lambda^2}(\boldsymbol{x})  \big[ \big(e^{a, d}_{\lambda^2}(\boldsymbol{x})\big) (2a_kx_k+ d_k) f  * (e^{a, d}_{\lambda^2}g) \big)(\boldsymbol{x}) 
			\\&& - (2a_kx_k+ d_k) \big(e^{a, d}_{-\lambda^2}(\boldsymbol{x})\big) \big( (e^{a, d}_{\lambda^2}f) * (e^{a, d}_{\lambda^2}g) \big)(\boldsymbol{x}) \Big]
			\\&& +  |\lambda|^\frac{N}{2} C_{\boldsymbol{\Lambda}}  e^{a, d}_{-\lambda^2}(\boldsymbol{x}) \big(e^{a, d}_{\lambda^2}(\boldsymbol{x})\big) (\partial_{x_k}f)  * (e^{a, d}_{\lambda^2}g) \big)(\boldsymbol{x})
			\\&=&  2a_k i |\lambda|^{\frac{N}{2}+2} C_{\boldsymbol{\Lambda}} \Big[e^{a, d}_{-\lambda^2}(\boldsymbol{x})  \big[ \big(e^{a, d}_{\lambda^2}(\boldsymbol{x})\big) x_k f  * (e^{a, d}_{\lambda^2}g) \big)(\boldsymbol{x}) - x_k \big(e^{a, d}_{-\lambda^2}(\boldsymbol{x})\big) \big( (e^{a, d}_{\lambda^2}f) * (e^{a, d}_{\lambda^2}g) \big)(\boldsymbol{x}) \Big]
			\\&&+ i d_k |\lambda|^{\frac{N}{2}+2} C_{\boldsymbol{\Lambda}} \Big[e^{a, d}_{-\lambda^2}(\boldsymbol{x})  \big[ \big(e^{a, d}_{\lambda^2}\big) f  * (e^{a, d}_{\lambda^2}g) \big](\boldsymbol{x}) - \big(e^{a, d}_{-\lambda^2}(\boldsymbol{x})\big) \big( (e^{a, d}_{\lambda^2}f) * (e^{a, d}_{\lambda^2}g) \big)(\boldsymbol{x}) \Big]
			\\&& +  |\lambda|^\frac{N}{2} C_{\boldsymbol{\Lambda}}  e^{a, d}_{-\lambda^2}(\boldsymbol{x}) \big(e^{a, d}_{\lambda^2}(\boldsymbol{x})\big) \partial_{x_k}f  * (e^{a, d}_{\lambda^2}g) \big)(\boldsymbol{x})
			\\&=& 2a_k i |\lambda|^{\frac{N}{2}+2} C_{\boldsymbol{\Lambda}} \Big[e^{a, d}_{-\lambda^2}(\boldsymbol{x})  \big[ \big(e^{a, d}_{\lambda^2}(\boldsymbol{x})\big) P_k(\boldsymbol{x}) f  * (e^{a, d}_{\lambda^2}g \big)(\boldsymbol{x}) \big] \\&&- P_k(\boldsymbol{x}) \big(e^{a, d}_{-\lambda^2}(\boldsymbol{x})\big) \big( (e^{a, d}_{\lambda^2}f) * (e^{a, d}_{\lambda^2}g) \big)(\boldsymbol{x}) \Big]
			+  ( \partial_{x_k}f \otimes_{\boldsymbol{\Lambda}, \lambda} g)(\boldsymbol{x}) 
			\\&=&  2a_k i\lambda^2 \big[( P_k f \otimes_{\boldsymbol{\Lambda}, \lambda} g)(\boldsymbol{x}) - P_k(f \otimes_{\boldsymbol{\Lambda}, \lambda} g)(\boldsymbol{x})\big] +  ( \partial_{x_k}f \otimes_{\boldsymbol{\Lambda}, \lambda} g)(\boldsymbol{x}). 
		\end{eqnarray*} 
		(b) The proof follows similarly to part (a). 
		\newline
		\newline
		Hence, the proof is complete.
	\end{proof}
	
	\section{Inversion theorem}
	In this section, we establish the inversion theorem for the multidimensional QPFT using the convolution theorem \ref{def3} and its properties. Before moving ahead, we state two lemmas that supports our main results.
	\begin{lemma}\label{lem1}
		Let $\boldsymbol{\Lambda} = (\Lambda_1, \Lambda_2, . . . , \Lambda_N)$	such that $\Lambda_k = (a_k, b_k, . . . ,e_k)$, $b_k \neq 0$, $k = 1, 2, . . . , N$. Moreover $\lambda > 0$, 
		 \begin{eqnarray*}
		 	H_\lambda (\boldsymbol{x}) = \prod_{k=1}^{N} e^{-|\lambda x_k|} \text{ and }		 
		 \end{eqnarray*} 
		\begin{eqnarray}\label{h}
			h_{\boldsymbol{\Lambda}, \lambda} (\boldsymbol{\omega}) = \int_{\mathbb{R}^N} H_\lambda(\boldsymbol{x}) e_1^{\boldsymbol{c, e}} (\boldsymbol{x}) \mathcal{K}_{-\boldsymbol{\Lambda}} (\boldsymbol{\omega}, \boldsymbol{x}) d\boldsymbol{x}.
		\end{eqnarray}
		Then the following holds
		\begin{enumerate}
			\item $H_\lambda (\boldsymbol{x}) = \prod_{k=1}^{N} e^{-|\lambda x_k|} \to 1 \text{ as } \lambda \to 0 \text{ for each } \boldsymbol{x} \in \mathbb{R}^N$,
			\vspace{0.15cm}
			\item $\frac{C_{\boldsymbol{\Lambda}}}{(2\pi)^\frac{N}{2}} \int_{\mathbb{R}^N} h_{\boldsymbol{\Lambda}, \lambda} (\boldsymbol{\omega}) e^{\boldsymbol{a, d}} (\boldsymbol{\omega}) d\boldsymbol{\omega} = 1$,
			\vspace{0.15cm}
			\item $h_{\boldsymbol{\Lambda}, \lambda} \in L^p (\mathbb{R}^N),~~ p \in [1, \infty), \text{ if } |C_{\boldsymbol{\Lambda}}| \leq 1$,
			\vspace{0.15cm}
			\item $h_{\boldsymbol{\Lambda}, \lambda}(\boldsymbol{\omega}) = \frac{e_1^{\boldsymbol{a, d}}(\boldsymbol{\omega}) e_1^{\boldsymbol{c, e}}(\frac{\boldsymbol{\omega}}{\lambda})}{\lambda^N} h_{\boldsymbol{\Lambda}, 1} (\frac{\boldsymbol{\omega}}{\lambda})$.
		\end{enumerate}
	\end{lemma}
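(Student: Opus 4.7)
I would handle the four items essentially independently: (1) and (3) rest on elementary estimates, (2) reduces to the QPFT inversion formula evaluated at the origin, and (4) is a scaling change of variables. Part~(1) is immediate because $|\lambda x_{k}|\to 0$ as $\lambda\to 0^{+}$ for each fixed $\boldsymbol{x}$, so by continuity of the exponential each factor $e^{-|\lambda x_{k}|}\to 1$ and hence so does the finite product $H_{\lambda}(\boldsymbol{x})$. For part~(2), the key observation is that the defining integral of $h_{\boldsymbol{\Lambda},\lambda}(\boldsymbol{\omega})$ is exactly $\mathcal{Q}_{-\boldsymbol{\Lambda}}(H_{\lambda}\,e_{1}^{\boldsymbol{c,e}})(\boldsymbol{\omega})$, by comparison with Definition~\ref{def}. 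Since $H_{\lambda}\,e_{1}^{\boldsymbol{c,e}}\in L^{1}(\mathbb{R}^{N})\cap L^{2}(\mathbb{R}^{N})$ by the exponential decay of $H_{\lambda}$, the multidimensional inversion $\mathcal{Q}_{\boldsymbol{\Lambda}}\mathcal{Q}_{-\boldsymbol{\Lambda}}=\mathrm{id}$ (the tensorisation of the one-dimensional identity in Definition~\ref{def1}) gives $\mathcal{Q}_{\boldsymbol{\Lambda}}(h_{\boldsymbol{\Lambda},\lambda})(\boldsymbol{x})=H_{\lambda}(\boldsymbol{x})\,e_{1}^{\boldsymbol{c,e}}(\boldsymbol{x})$. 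Writing the left-hand side via the factored form of the kernel $\mathcal{K}_{\boldsymbol{\Lambda}}$ and evaluating at $\boldsymbol{x}=\boldsymbol{0}$ collapses the statement to $\tfrac{C_{\boldsymbol{\Lambda}}}{(2\pi)^{N/2}}\int h_{\boldsymbol{\Lambda},\lambda}(\boldsymbol{\omega})\,e_{1}^{\boldsymbol{a,d}}(\boldsymbol{\omega})\,d\boldsymbol{\omega}=H_{\lambda}(\boldsymbol{0})\,e_{1}^{\boldsymbol{c,e}}(\boldsymbol{0})=1$, as required.

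For part~(3), I would substitute the factored kernel into the definition of $h_{\boldsymbol{\Lambda},\lambda}$ to rewrite it as a unit-modulus phase times the multidimensional Fourier transform of $\Phi_{\lambda}(\boldsymbol{x})=H_{\lambda}(\boldsymbol{x})\,e^{i\sum_{k}[(c_{k}-a_{k})x_{k}^{2}+(e_{k}-d_{k})x_{k}]}$, evaluated at the stretched frequency $(b_{1}\omega_{1},\ldots,b_{N}\omega_{N})$. Since $\widehat{H_{\lambda}}$ is a product of one-dimensional Cauchy kernels proportional to $\lambda/(\lambda^{2}+\xi_{k}^{2})$, it lies in $L^{p}(\mathbb{R}^{N})$ for every $p\in[1,\infty)$; the quadratic-phase modulation in $\Phi_{\lambda}$ only twists $\widehat{H_{\lambda}}$ by a Fresnel-type convolution that preserves $L^{p}$-integrability, and the hypothesis $|C_{\boldsymbol{\Lambda}}|\le 1$ keeps the prefactor bounded, so $h_{\boldsymbol{\Lambda},\lambda}\in L^{p}(\mathbb{R}^{N})$. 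For part~(4), I would apply the change of variables $\boldsymbol{y}=\lambda\boldsymbol{x}$ in the defining integral of $h_{\boldsymbol{\Lambda},\lambda}(\boldsymbol{\omega})$: this turns $H_{\lambda}(\boldsymbol{x})$ into $H_{1}(\boldsymbol{y})$, produces the Jacobian $\lambda^{-N}$, and rescales the quadratic and linear phases in $\boldsymbol{x}$ so that $\boldsymbol{\omega}$ is effectively replaced by $\boldsymbol{\omega}/\lambda$ inside the integrand. Collecting the resulting $\boldsymbol{\omega}$-dependent constants that are extruded, and reassembling them using the identity $e_{1}^{\boldsymbol{c,e}}(\boldsymbol{\omega}/\lambda)\,e_{-1}^{\boldsymbol{c,e}}(\boldsymbol{\omega}/\lambda)=1$, leaves the announced prefactor $e_{1}^{\boldsymbol{a,d}}(\boldsymbol{\omega})\,e_{1}^{\boldsymbol{c,e}}(\boldsymbol{\omega}/\lambda)/\lambda^{N}$ multiplying exactly $h_{\boldsymbol{\Lambda},1}(\boldsymbol{\omega}/\lambda)$.

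The step I anticipate being the main obstacle is~(4): although it is ultimately a routine change of variables, the quadratic and linear phase factors in $\boldsymbol{\omega}$ and $\boldsymbol{\omega}/\lambda$ must be bookkept with care for the residual prefactor to collapse to exactly the stated form, as any mis-collection of the $\boldsymbol{\omega}^{2}/\lambda^{2}$ and $\boldsymbol{\omega}/\lambda$ phases that naturally appear in $h_{\boldsymbol{\Lambda},1}(\boldsymbol{\omega}/\lambda)$ will break the identity. Secondary technical concerns are the Fubini exchange underlying~(2) and the $L^{p}$-bound in~(3); both are justified by the exponential decay of $H_{\lambda}$ and the boundedness of the QPFT kernel.
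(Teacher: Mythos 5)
Parts (1) and (4) of your outline coincide with the paper's proof (pointwise limit of the exponentials; the substitution $\boldsymbol{y}=\lambda\boldsymbol{x}$ in \eqref{h}), and those are fine. For part (2), however, you take a genuinely different route: you read $h_{\boldsymbol{\Lambda},\lambda}=\mathcal{Q}_{-\boldsymbol{\Lambda}}\bigl(H_{\lambda}e_{1}^{\boldsymbol{c,e}}\bigr)$ and invoke $\mathcal{Q}_{\boldsymbol{\Lambda}}\circ\mathcal{Q}_{-\boldsymbol{\Lambda}}=\mathrm{id}$, evaluating at the origin. Be aware that this lemma is the engine used in this very section to \emph{prove} the multidimensional inversion theorem, so appealing to a ``tensorised'' inversion here is at best circular in spirit; you would have to justify the multidimensional identity for this specific function independently (with the attendant Fubini and pointwise-inversion hypotheses). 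The paper avoids this entirely by brute force: expanding the kernel in \eqref{h}, the weight $e_{1}^{\boldsymbol{c,e}}(\boldsymbol{x})$ cancels the matching quadratic and linear phases in $\boldsymbol{x}$ carried by $\mathcal{K}_{-\boldsymbol{\Lambda}}$, leaving the closed form
\begin{eqnarray*}
h_{\boldsymbol{\Lambda},\lambda}(\boldsymbol{\omega})=\frac{C_{-\boldsymbol{\Lambda}}}{(2\pi)^{N/2}}\,e_{-1}^{\boldsymbol{a,d}}(\boldsymbol{\omega})\prod_{k=1}^{N}\frac{2\lambda}{\lambda^{2}+b_{k}^{2}\omega_{k}^{2}},
\end{eqnarray*}
after which (2) is an elementary $\arctan$ integral. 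The direct computation is both logically safer and does all the remaining work for free.

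The genuine gap is in part (3). Your function $\Phi_{\lambda}$ retains a residual chirp $e^{i\sum_{k}(c_{k}-a_{k})x_{k}^{2}}$, and you then assert that the induced ``Fresnel-type convolution preserves $L^{p}$-integrability.'' That step is not proved and is false in general: convolution with a unimodular chirp is bounded from $L^{1}$ to $L^{\infty}$ (and is unitary on $L^{2}$), but it does not map $L^{1}$ to $L^{1}$, so the $p=1$ case of (3) --- which is exactly the case needed later for the inversion theorem --- is not established by this argument. The resolution is that no chirp should survive: the factor $e_{1}^{\boldsymbol{c,e}}(\boldsymbol{x})$ in \eqref{h} is inserted precisely so that the $\boldsymbol{x}$-phases of $\mathcal{K}_{-\boldsymbol{\Lambda}}$ cancel exactly (this is how the paper reads its own, admittedly ambiguous, kernel notation), and then $h_{\boldsymbol{\Lambda},\lambda}$ is a unimodular phase times a product of one-dimensional Poisson kernels, whose membership in every $L^{p}(\mathbb{R}^{N})$, $p\in[1,\infty)$, is immediate. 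You should either track the cancellation explicitly or, if you insist on the literal kernel convention in which a chirp survives, supply an actual proof of the $L^{1}$ bound --- the sketch as written does not give one.
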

	\begin{proof}
		(1) We observe that $e^{|\lambda x_k|}$ approaches $1$, as $\lambda$ tends to $0$, for all $k =1, 2, . . . , N$. So the desired result follows immediately.
		\\(2) By using the equations \eqref{h} and \eqref{ep}, we get 
		\begin{eqnarray}\label{eqn4.2}
			h_{\boldsymbol{\Lambda}, \lambda} (\boldsymbol{\omega})
			&=& \prod_{k=1}^{N} \int_{\mathbb{R}}  e^{-|\lambda x_k|} e^{i (c_k x_k^2 + e_k x_k)} \mathcal{K}_{-\Lambda_k} (x_k, \omega_k) d x_k \nonumber\\
			&=& \frac{C_{-\boldsymbol{\Lambda}}}{(2 \pi)^\frac{N}{2}} \prod_{k=1}^{N} \int_{\mathbb{R}}  e^{-|\lambda x_k|} e^{i (c_k x_k^2 + e_k x_k)} ~e^{-i(a_k \omega_k^2 + b_k x_k \omega_k + c_k x_k^2 + d_k \omega_k + e_k x_k)} d x_k \nonumber
			\\&=& \frac{C_{-\boldsymbol{\Lambda}}}{(2 \pi)^\frac{N}{2}} ~e^{-i(a_k \omega_k^2 + d_k \omega_k)} \prod_{k=1}^{N} \int_{\mathbb{R}}  e^{-|\lambda x_k|} ~e^{-ib_k x_k \omega_k} d x_k \nonumber
			\\&=& \frac{C_{-\boldsymbol{\Lambda}}}{(2 \pi)^\frac{N}{2}} ~e_{-1}^{\boldsymbol{a, d}}(\boldsymbol{\omega}) \prod_{k=1}^{N} \bigg[ \int_{-\infty}^0  e^{(\lambda - i b_k \omega_k) x_k} d x_k + \int_0^{\infty}  e^{-(\lambda + i b_k \omega_k) x_k} d x_k \bigg] \nonumber
			\\&=& \frac{C_{-\boldsymbol{\Lambda}}}{(2 \pi)^\frac{N}{2}} ~e_{-1}^{\boldsymbol{a, d}}(\boldsymbol{\omega}) \prod_{k=1}^{N} \biggl\{\bigg[   \frac{1}{(\lambda - i b_k \omega_k)} \bigg] + \bigg[ \frac{1}{(\lambda + i b_k \omega_k)} \bigg]\biggr\} \nonumber
			\\&=& \frac{C_{-\boldsymbol{\Lambda}}}{(2 \pi)^\frac{N}{2}} ~e_{-1}^{\boldsymbol{a, d}}(\boldsymbol{\omega}) \prod_{k=1}^{N} \biggl\{ \frac{2\lambda}{(\lambda^2 +  b_k^2 \omega_k^2)} \biggr\}.			
		\end{eqnarray}
		This implies
		\begin{eqnarray*}
			\frac{C_{\boldsymbol{\Lambda}}}{(2 \pi)^\frac{N}{2}} h_{\boldsymbol{\Lambda}, \lambda} (\boldsymbol{\omega}) ~e_{1}^{\boldsymbol{a, d}}(\boldsymbol{\omega}) 
			&=& \frac{C_{\boldsymbol{\Lambda}}}{(2 \pi)^\frac{N}{2}}  \frac{C_{-\boldsymbol{\Lambda}}}{(2 \pi)^\frac{N}{2}} \prod_{k=1}^{N} \biggl\{ \frac{2\lambda}{(\lambda^2 +  b_k^2 \omega_k^2)} \biggr\}\\
			&=& \frac{1} {(\pi)^N} \prod_{k=1}^{N} \biggl\{ \frac{\lambda b_k}{(\lambda^2 +  b_k^2 \omega_k^2)} \biggr\}.		
		\end{eqnarray*}
		Integrating on both sides of the equation to get
		\begin{eqnarray*}
			\frac{C_{\boldsymbol{\Lambda}}}{(2 \pi)^\frac{N}{2}} \int_{\mathbb{R}^N} h_{\boldsymbol{\Lambda}, \lambda} (\boldsymbol{\omega}) ~e_{1}^{\boldsymbol{a, d}}(\boldsymbol{\omega}) d\boldsymbol{\omega}
			&=& \frac{1} {(\pi)^N} \prod_{k=1}^{N} \int_{\mathbb{R}} \biggl\{ \frac{\lambda b_k}{(\lambda^2 +  b_k^2 \omega_k^2)} \biggr\} d\omega_k
			\\&=& \frac{1} {(\pi)^N} \prod_{k=1}^{N} \int_{\mathbb{R}} \biggl\{ \frac{\lambda }{(\lambda^2 +  z_k^2)} \biggr\} dz_k,		
		\end{eqnarray*}
		where $z_k = b_k \omega_k$. On further simplification, we get
		\begin{eqnarray*}
			\frac{C_{\boldsymbol{\Lambda}}}{(2 \pi)^\frac{N}{2}} \int_{\mathbb{R}^N} h_{\boldsymbol{\Lambda}, \lambda} (\boldsymbol{\omega}) ~e_{1}^{\boldsymbol{a, d}}(\boldsymbol{\omega}) d\boldsymbol{\omega}
			&=& \frac{1} {(\pi)^N} \prod_{k=1}^{N}  \biggl[ \arctan \Big(\frac{z_k}{\lambda}\Big) \biggr]_{-\infty}^{\infty}\\
			&=& \frac{1} {(\pi)^N} \prod_{k=1}^{N} \biggl\{ \frac{\pi}{2}-\frac{-\pi}{2} \biggr\}
			\\&=& \frac{1} {(\pi)^N} \prod_{k=1}^{N} \pi
			\\&=& 1.
		\end{eqnarray*}
		Hence the result.
		\\(3) For $p \in [1, \infty)$, using \eqref{eqn4.2}, we obtain
		\begin{eqnarray*}
			||h_{\boldsymbol{\Lambda}, \lambda} ||_p^p 
			&=& \frac{1}{(2 \pi)^{\frac{N}{2}}} \int_{\mathbb{R}^N} \bigg| \frac{C_{-\boldsymbol{\Lambda}}}{(2 \pi)^\frac{N}{2}} ~e_{-1}^{\boldsymbol{a, d}}(\boldsymbol{\omega}) \prod_{k=1}^{N} \biggl\{ \frac{2\lambda}{(\lambda^2 +  b_k^2 \omega_k^2)} \biggr\} \bigg|^p d\boldsymbol{\omega}\\
			&=& \frac{|C_{-\boldsymbol{\Lambda}}|^p}{\lambda^{pN}(2 \pi)^\frac{N}{2}} \int_{\mathbb{R}^N} \bigg| \bigg( \sqrt{\frac{2}{\pi}}\bigg)^N \prod_{k=1}^{N} \biggl\{ \frac{\lambda^2}{(\lambda^2 +  b_k^2 \omega_k^2)} \biggr\} \bigg|^p d\boldsymbol{\omega}\\
			&=& \frac{|C_{-\boldsymbol{\Lambda}}|^p}{\lambda^{pN}(2 \pi)^\frac{N}{2}} \prod_{k=1}^{N} \int_{\mathbb{R}}  \bigg( \sqrt{\frac{2}{\pi}} \bigg)^p  \biggl\{ \frac{\lambda^2}{(\lambda^2 +  b_k^2 \omega_k^2)} \biggr\}^p d\omega_k\\
			&\leq& \frac{|C_{-\boldsymbol{\Lambda}}|^p}{\lambda^{pN}(2 \pi)^\frac{N}{2}} \prod_{k=1}^{N} \int_{\mathbb{R}}  \bigg( \sqrt{\frac{2}{\pi}} \bigg)  \biggl\{ \frac{\lambda^2}{(\lambda^2 +  b_k^2 \omega_k^2)} \biggr\} d\omega_k\\
			&=& \frac{|C_{-\boldsymbol{\Lambda}}|^p}{\lambda^{pN-N}(\pi)^N} \bigg( \prod_{k=1}^{N} \frac{1}{b_k} \bigg)\prod_{k=1}^{N} \int_{\mathbb{R}}    \biggl\{ \frac{\lambda b_k}{(\lambda^2 +  b_k^2 \omega_k^2)} \biggr\} d\omega_k\\		
			&=& \frac{|C_{-\boldsymbol{\Lambda}}|^{p-2}}{\lambda^{pN-N}(\pi)^N}  \prod_{k=1}^{N} \int_{\mathbb{R}}    \biggl\{ \frac{\lambda b_k}{(\lambda^2 +  b_k^2 \omega_k^2)} \biggr\} d\omega_k\\
			&=& \frac{|C_{-\boldsymbol{\Lambda}}|^{p-2}}{\lambda^{pN-N}(\pi)^N} \prod_{k=1}^{N} \pi \\
			&=& \frac{|C_{-\boldsymbol{\Lambda}}|^{p-2}}{\lambda^{pN-N}}
			~~ < \infty.								
		\end{eqnarray*}
		Therefore
		$h_{\boldsymbol{\Lambda}, \lambda} \in L^p(\mathbb{R}^N)$.
		\\(4) Making the change of variable $\lambda x_k = y_k$ in the equation \eqref{h}, we get
		\begin{eqnarray*}
			h_{\boldsymbol{\Lambda}, \lambda}(\boldsymbol{\omega}) &=& \prod_{k=1}^{N} \int_{\mathbb{R}}  e^{-|y_k|} e^{i \big(c_k \big(\frac{y_k}{\lambda}\big)^2 + e_k (\frac{y_k}{\lambda})\big)} \mathcal{K}_{-\Lambda_k} (\frac{y_k}{\lambda}, \omega_k)  d\big(\frac{y_k}{\lambda}\big)\\
			&=& \frac{C_{-\Lambda}}{(\sqrt{2 \pi} \lambda)^N} \prod_{k=1}^{N} \int_{\mathbb{R}}  e^{-|y_k|} e^{i \big(c_k \big(\frac{y_k}{\lambda}\big)^2 + e_k (\frac{y_k}{\lambda})\big)} e^{-i(a \omega_k^2 + b \big(\frac{y_k}{\lambda}\big) \omega_k + c \big(\frac{y_k}{\lambda}\big)^2 + d \omega_k + e \big(\frac{y_k}{\lambda}\big))}  dy_k\\
			&=& \frac{C_{-\Lambda}}{(\sqrt{2 \pi} \lambda)^N} e_{-1}^{a, d}(\boldsymbol{\omega}) \prod_{k=1}^{N} \int_{\mathbb{R}}  e^{-|y_k|} e^{-i(b_k \big(\frac{y_k}{\lambda}\big) \omega_k)}  dy_k\\
			&=& \frac{1}{{\lambda}^N} e_{-1}^{a, d}(\boldsymbol{\omega}) e_{1}^{a, d}\big(\frac{\boldsymbol{\omega}}{\lambda}\big) \int_{\mathbb{R}^N} H_1(\boldsymbol{y}) e_{1}^{c, e}(\boldsymbol{y}) \mathcal{K}_{\boldsymbol{-\Lambda}}(\frac{\boldsymbol{\omega}}{\lambda}, \boldsymbol{y})d\boldsymbol{y}
			\\&=& \frac{e_{-1}^{a, d}(\boldsymbol{\omega}) e_{1}^{a, d}\big(\frac{\boldsymbol{\omega}}{\lambda}\big)}{{\lambda}^N}  h_{\boldsymbol{\Lambda}, 1} \big( \frac{\boldsymbol{\omega}}{\lambda}\big).
		\end{eqnarray*}
		This completes the proof.
	\end{proof}

	\begin{lemma}
		If $g \in L^\infty (\mathbb{R}^N)$ is a continuous function at $\boldsymbol{x}$, then $(g ~{\otimes}_{\boldsymbol{\Lambda}, 1}~ h_{\boldsymbol{\Lambda}, \lambda})(\boldsymbol{x}) \to g(\boldsymbol{x})$ as $\lambda$ tends to $0$. 
	\end{lemma}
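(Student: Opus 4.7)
The plan is to recognize this lemma as an approximate-identity statement: once the chirp factors built into $\otimes_{\boldsymbol{\Lambda}, 1}$ are peeled off, the family $\{h_{\boldsymbol{\Lambda}, \lambda}\}_{\lambda > 0}$ plays the role of a scaled mollifier concentrating mass at the origin. The four parts of Lemma \ref{lem1} supply exactly what is needed: the pointwise limit of the underlying bump $H_\lambda$, the normalization of $h_{\boldsymbol{\Lambda}, \lambda}$ against the chirp $e_1^{\boldsymbol{a,d}}$, the integrability of the generator $h_{\boldsymbol{\Lambda}, 1}$, and the explicit scaling law relating $h_{\boldsymbol{\Lambda}, \lambda}$ to $h_{\boldsymbol{\Lambda}, 1}$. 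So the proof reduces to careful bookkeeping of phase factors together with one application of the dominated convergence theorem.

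I would begin by unfolding $(g \otimes_{\boldsymbol{\Lambda}, 1} h_{\boldsymbol{\Lambda}, \lambda})(\boldsymbol{x})$ via Definition \ref{def3} with parameter $\lambda = 1$, rewriting it as $\frac{C_{\boldsymbol{\Lambda}}}{(2\pi)^{N/2}} e_{-1}^{\boldsymbol{a,d}}(\boldsymbol{x}) \int_{\mathbb{R}^N} e_1^{\boldsymbol{a,d}}(\boldsymbol{\tau}) g(\boldsymbol{\tau}) e_1^{\boldsymbol{a,d}}(\boldsymbol{x}-\boldsymbol{\tau}) h_{\boldsymbol{\Lambda}, \lambda}(\boldsymbol{x}-\boldsymbol{\tau}) \, d\boldsymbol{\tau}$. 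Next, I would make the scaled change of variable $\boldsymbol{\tau} = \boldsymbol{x} - \lambda \boldsymbol{z}$, whose Jacobian $\lambda^N$ cancels precisely with the $1/\lambda^N$ appearing in the scaling formula of Lemma \ref{lem1}(4) evaluated at $h_{\boldsymbol{\Lambda}, \lambda}(\lambda \boldsymbol{z})$. After substitution, two chirps evaluated at $\lambda \boldsymbol{z}$ collapse against each other, leaving the compact expression $\frac{C_{\boldsymbol{\Lambda}}}{(2\pi)^{N/2}} e_{-1}^{\boldsymbol{a,d}}(\boldsymbol{x}) \int_{\mathbb{R}^N} e_1^{\boldsymbol{a,d}}(\boldsymbol{x}-\lambda\boldsymbol{z}) g(\boldsymbol{x}-\lambda\boldsymbol{z}) e_1^{\boldsymbol{a,d}}(\boldsymbol{z}) h_{\boldsymbol{\Lambda}, 1}(\boldsymbol{z}) \, d\boldsymbol{z}$, in which all $\lambda$-dependence has been isolated in the continuous factor $g(\boldsymbol{x}-\lambda\boldsymbol{z}) e_1^{\boldsymbol{a,d}}(\boldsymbol{x}-\lambda\boldsymbol{z})$.

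The conclusion then follows from the dominated convergence theorem. Pointwise in $\boldsymbol{z}$, continuity of $g$ at $\boldsymbol{x}$ gives $g(\boldsymbol{x}-\lambda\boldsymbol{z}) \to g(\boldsymbol{x})$ as $\lambda \to 0$, and the continuous chirp satisfies $e_1^{\boldsymbol{a,d}}(\boldsymbol{x}-\lambda\boldsymbol{z}) \to e_1^{\boldsymbol{a,d}}(\boldsymbol{x})$. The integrand is dominated in modulus by $\|g\|_\infty |h_{\boldsymbol{\Lambda}, 1}(\boldsymbol{z})|$, which is integrable by the explicit product formula obtained in \eqref{eqn4.2} (equivalently by Lemma \ref{lem1}(3)). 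Passing to the limit, the prefactor combines as $e_{-1}^{\boldsymbol{a,d}}(\boldsymbol{x}) e_1^{\boldsymbol{a,d}}(\boldsymbol{x}) = 1$, and the remaining integral $\frac{C_{\boldsymbol{\Lambda}}}{(2\pi)^{N/2}} \int_{\mathbb{R}^N} e_1^{\boldsymbol{a,d}}(\boldsymbol{z}) h_{\boldsymbol{\Lambda}, 1}(\boldsymbol{z}) \, d\boldsymbol{z}$ equals $1$ by Lemma \ref{lem1}(2) specialized to $\lambda = 1$. The limit is therefore exactly $g(\boldsymbol{x})$, as claimed.

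The main obstacle is tracking phase factors: four chirps of the form $e_{\pm 1}^{\boldsymbol{a,d}}$ arise in the expansion, and they must pair up and cancel at just the right steps for the reduction to a standard approximate-identity argument to close. The key discipline is to perform the scaling substitution $\boldsymbol{\tau} = \boldsymbol{x} - \lambda \boldsymbol{z}$ before attempting to pass to the limit, because scaling the argument of $h_{\boldsymbol{\Lambda}, \lambda}$ is precisely what converts the $\lambda$-dependent piece into the fixed integrable function $h_{\boldsymbol{\Lambda}, 1}$ and concentrates all remaining $\lambda$-dependence inside the factor $g(\boldsymbol{x}-\lambda\boldsymbol{z})$, where the hypothesis of continuity of $g$ at $\boldsymbol{x}$ can be applied directly.
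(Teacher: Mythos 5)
Your proposal is correct and follows essentially the same route as the paper's own proof: unfold the convolution, invoke the scaling identity of Lemma \ref{lem1}(4) together with the change of variables that sends the argument of $h_{\boldsymbol{\Lambda},\lambda}$ to $\lambda\boldsymbol{z}$, cancel the chirps, and conclude by dominated convergence with the normalization from Lemma \ref{lem1}(2). The only cosmetic difference is that you write the convolution in the commuted order and substitute in the argument of $g$ rather than of $h$, which yields the identical intermediate expression.
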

	\begin{proof}
		We have, $h_{\boldsymbol{\Lambda}, \lambda} \in L^1(\mathbb{R}^N)$ and $g$ an element of $L^\infty (\mathbb{R}^N)$, by using the definition \ref{def3}, we get
		\begin{eqnarray*}
			(g ~{\otimes}_{\boldsymbol{\Lambda}, 1}~ h_{\boldsymbol{\Lambda}, \lambda})(\boldsymbol{x})
			&=& \frac{C_{\boldsymbol{\Lambda}}}{(\sqrt{2 \pi})^N} e^{a, d}_{-1}(\boldsymbol{x}) \int_{\mathbb{R}^N} ((e^{a, d}_{1}g)(\boldsymbol{x}-\boldsymbol{t}) (e^{a, d}_{1}h_{\boldsymbol{\Lambda}, \lambda})(\boldsymbol{t}) \big) d\boldsymbol{t}.
		\end{eqnarray*}		
		Applying Lemma \ref{lem1}-(4) gives 
		\begin{eqnarray*}
			(g ~{\otimes}_{\boldsymbol{\Lambda}, 1}~ h_{\boldsymbol{\Lambda}, \lambda})(\boldsymbol{x}) &=& \frac{C_{\boldsymbol{\Lambda}}}{(\sqrt{2 \pi})^N} e^{a, d}_{-1}(\boldsymbol{x}) \int_{\mathbb{R}^N} e^{a, d}_{1}(\boldsymbol{x}-\boldsymbol{t})g(\boldsymbol{x}-\boldsymbol{t}) e^{a, d}_{1}(\boldsymbol{t}) \frac{e_{-1}^{a, d}(\boldsymbol{t}) e_{1}^{a, d}\big(\frac{\boldsymbol{t}}{\lambda}\big)}{{\lambda}^N}  h_{\boldsymbol{\Lambda}, 1} \big( \frac{\boldsymbol{t}}{\lambda}\big) d\boldsymbol{t}
			\\ &=& \frac{C_{\boldsymbol{\Lambda}}}{(\sqrt{2 \pi})^N} e^{a,d}_{-1}(\boldsymbol{x}) \int_{\mathbb{R}^N} e^{a, d}_{1}(\boldsymbol{x}-\boldsymbol{t})g(\boldsymbol{x}-\boldsymbol{t})  \frac{e_{1}^{a, d}\big(\frac{\boldsymbol{t}}{\lambda}\big)}{{\lambda}^N}  h_{\boldsymbol{\Lambda}, 1} \big( \frac{\boldsymbol{t}}{\lambda}\big) d\boldsymbol{t}.
		\end{eqnarray*}	
		Take $\boldsymbol{u} = \frac{\boldsymbol{t}}{\lambda}$, we get
		\begin{eqnarray*}
			(g ~{\otimes}_{\boldsymbol{\Lambda}, 1}~ h_{\boldsymbol{\Lambda}, \lambda})(\boldsymbol{x}) &=& \frac{C_{\boldsymbol{\Lambda}}}{(\sqrt{2 \pi})^N} e^{a,d}_{-1}(\boldsymbol{x}) \int_{\mathbb{R}^N} e^{a, d}_{1}(\boldsymbol{x}-\lambda \boldsymbol{u})g(\boldsymbol{x}-\lambda \boldsymbol{u})  e_{1}^{a, d}\big(\boldsymbol{u} \big) h_{\boldsymbol{\Lambda}, 1} \big( \boldsymbol{u}\big) d\boldsymbol{u}.
		\end{eqnarray*} 
		The integrand converges to $\frac{C_{\boldsymbol{\Lambda}}}{(\sqrt{2 \pi})^N} g(\boldsymbol{x})  e_{1}^{a, d}\big(\boldsymbol{u} \big) h_{\boldsymbol{\Lambda}, 1} \big( \boldsymbol{u}\big)$ as $\lambda \to 0$, $\forall \boldsymbol{u} \in \mathbb{R}^N$ and is bounded by the integrable function $\big|\frac{C_{\boldsymbol{\Lambda}}}{(\sqrt{2 \pi})^N}\big| ||g||_\infty h_{\boldsymbol{\Lambda}, 1}$ for each $\lambda \neq 0$, we can now apply the dominated convergence theorem along with Lemma \ref{lem1}, leading to the integral tending to
		\begin{eqnarray*}
			\frac{C_{\boldsymbol{\Lambda}}}{(\sqrt{2 \pi})^N} \int_{\mathbb{R}^N} g(\boldsymbol{x})  e_{1}^{a, d}\big(\boldsymbol{u} \big) h_{\boldsymbol{\Lambda}, 1} \big( \boldsymbol{u}\big) d\boldsymbol{u} = g(\boldsymbol{x}) \text{ as } \lambda \to 0.
		\end{eqnarray*} 
	\end{proof}
	
	\begin{theorem}
		Let $1 \leq p < \infty$ and $f \in L^p(\mathbb{R}^N)$. Then $(f ~{\otimes}_{\boldsymbol{\Lambda}, 1}~ h_{\boldsymbol{\Lambda}, \lambda})(\boldsymbol{x}) \to f(\boldsymbol{x})$ in $L^p(\mathbb{R}^N)$ as $\lambda$ tends to $0$.
	\end{theorem}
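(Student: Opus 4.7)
The plan is to reduce the QPFT convolution with $h_{\boldsymbol{\Lambda}, \lambda}$ to an ordinary convolution against a classical approximate identity, and then apply the standard $L^p$ density argument. Unfolding Definition \ref{def3} with $\lambda$ replaced by $1$ gives
\begin{eqnarray*}
(f \otimes_{\boldsymbol{\Lambda}, 1} h_{\boldsymbol{\Lambda}, \lambda})(\boldsymbol{x}) = e_{-1}^{\boldsymbol{a, d}}(\boldsymbol{x})\, \big[(e_{1}^{\boldsymbol{a, d}} f) \ast \phi_\lambda\big](\boldsymbol{x}),
\end{eqnarray*}
where $\phi_\lambda := \tfrac{C_{\boldsymbol{\Lambda}}}{(2\pi)^{N/2}}\, e_{1}^{\boldsymbol{a, d}} h_{\boldsymbol{\Lambda}, \lambda}$ and $\ast$ denotes the ordinary convolution (without the $(2\pi)^{-N/2}$ factor). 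Because $|e_{\pm 1}^{\boldsymbol{a, d}}| \equiv 1$, multiplication by these unimodular phases is an $L^p$-isometry, so the theorem reduces to showing $(e_{1}^{\boldsymbol{a, d}} f) \ast \phi_\lambda \to e_{1}^{\boldsymbol{a, d}} f$ in $L^p(\mathbb{R}^N)$ as $\lambda \to 0$.

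Next, I would verify that $\{\phi_\lambda\}_{\lambda>0}$ is a genuine $L^1$ approximate identity. Lemma \ref{lem1}-(2) supplies the normalisation $\int_{\mathbb{R}^N}\phi_\lambda(\boldsymbol{\omega})\,d\boldsymbol{\omega} = 1$; Lemma \ref{lem1}-(4) exhibits the dilation structure $\phi_\lambda(\boldsymbol{x}) = \lambda^{-N}\phi_1(\boldsymbol{x}/\lambda)$ up to unimodular phases; and the explicit formula in \eqref{eqn4.2} shows $|\phi_\lambda(\boldsymbol{x})| = \lambda^{-N}|\phi_1(\boldsymbol{x}/\lambda)|$ with $\phi_1$ a product of Cauchy-type kernels. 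This gives simultaneously the uniform bound $\sup_{\lambda>0}\|\phi_\lambda\|_1 < \infty$ and the tail-vanishing property $\int_{|\boldsymbol{t}|>\delta}|\phi_\lambda(\boldsymbol{t})|\,d\boldsymbol{t}\to 0$ as $\lambda\to 0$ for every $\delta > 0$.

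For the final step, I would deploy the classical density argument. Given $f \in L^p(\mathbb{R}^N)$ and $\varepsilon>0$, choose $g \in C_c(\mathbb{R}^N)$ with $\|e_{1}^{\boldsymbol{a, d}} f - g\|_p < \varepsilon$. Young's inequality yields $\|(e_{1}^{\boldsymbol{a, d}} f - g)\ast\phi_\lambda\|_p \leq \|\phi_\lambda\|_1\, \varepsilon$, uniformly in $\lambda$. For the compactly supported continuous part, Minkowski's integral inequality gives
\begin{eqnarray*}
\|g\ast\phi_\lambda - g\|_p \leq \int_{\mathbb{R}^N} \|T_{\boldsymbol{t}} g - g\|_p\, |\phi_\lambda(\boldsymbol{t})|\,d\boldsymbol{t},
\end{eqnarray*}
and splitting the integral at $|\boldsymbol{t}|=\delta$ together with translation continuity in $L^p$ on the inner region and tail-vanishing on the outer region controls the right-hand side. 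A triangle-inequality assembly, sending $\lambda\to 0$ first and then $\varepsilon\to 0$, completes the argument.

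The main obstacle is the bookkeeping of phase factors: one must check that the identity of Lemma \ref{lem1}-(4) really forces $|\phi_\lambda|$ to concentrate near the origin, since the oscillatory factor $e_1^{\boldsymbol{c, e}}(\boldsymbol{t}/\lambda)$ has modulus one and contributes no concentration of its own. All the decay must be extracted from $|h_{\boldsymbol{\Lambda}, 1}(\boldsymbol{t}/\lambda)|$, whose product-of-Cauchy-kernels closed form in \eqref{eqn4.2} is precisely what makes the rescaling a genuine approximate identity, thereby legitimising both the uniform $L^1$-bound in Young's step and the tail-vanishing required in the Minkowski step.
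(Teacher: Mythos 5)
Your argument is correct, but it follows a genuinely different route from the paper's. You reduce $f \otimes_{\boldsymbol{\Lambda},1} h_{\boldsymbol{\Lambda},\lambda}$ to an ordinary convolution of $e_1^{\boldsymbol{a,d}}f$ against the family $\phi_\lambda = \frac{C_{\boldsymbol{\Lambda}}}{(2\pi)^{N/2}} e_1^{\boldsymbol{a,d}} h_{\boldsymbol{\Lambda},\lambda}$, verify via Lemma \ref{lem1}-(2), Lemma \ref{lem1}-(4) and the closed form \eqref{eqn4.2} that $\{\phi_\lambda\}$ is a classical approximate identity (indeed the phases cancel exactly, so $\phi_\lambda(\boldsymbol{x}) = \lambda^{-N}\phi_1(\boldsymbol{x}/\lambda)$ with $\phi_1$ a nonnegative product of Cauchy kernels of total mass one), and then run the standard density argument: approximate by $g \in C_c(\mathbb{R}^N)$, control the error term by Young's inequality, and handle $g * \phi_\lambda - g$ by Minkowski's integral inequality, translation continuity on $|\boldsymbol{t}| \le \delta$, and tail-vanishing on $|\boldsymbol{t}| > \delta$. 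The paper instead avoids the dilation and tail structure entirely: it turns $\frac{C_{\boldsymbol{\Lambda}}}{(2\pi)^{N/2}} e_1^{\boldsymbol{a,d}} h_{\boldsymbol{\Lambda},\lambda}\,d\boldsymbol{t}$ into a probability measure $\mu$, applies Jensen's inequality with the convex function $r \mapsto r^p$ to obtain the pointwise bound $|(f\otimes_{\boldsymbol{\Lambda},1}h_{\boldsymbol{\Lambda},\lambda})(\boldsymbol{x}) - f(\boldsymbol{x})|^p \le \int F_{\boldsymbol{x}}^p\,d\mu$, integrates in $\boldsymbol{x}$ and applies Fubini, and finally recognises the result as a convolution of the bounded continuous function $\boldsymbol{t} \mapsto \|\tau_{\boldsymbol{t}}(e_1^{\boldsymbol{a,d}}f) - e_1^{\boldsymbol{a,d}}f\|_p^p$ (which vanishes at $\boldsymbol{0}$) with $h_{\boldsymbol{\Lambda},\lambda}$, evaluated at the origin, so that the preceding pointwise lemma finishes the proof. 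Your approach is more elementary and self-contained (translation continuity is only needed on the dense class $C_c$, where it is just uniform continuity), at the cost of having to extract concentration from the explicit Cauchy-kernel form; the paper's approach reuses its earlier lemma and needs only positivity and normalisation of the kernel, but leans on Jensen's inequality and on strong continuity of translation for general $L^p$ functions. Both are sound.
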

	\begin{proof}
		For fixed $p \in [1, \infty)$, let $f \in L^p(\mathbb{R}^N)$. We define a measure on the Lebesgue $\sigma$-algebra $\mathcal{M}$ on $\mathbb{R}^N$ by
		\begin{eqnarray*}
			\mu(S) =  \frac{C_{\boldsymbol{\Lambda}}}{(\sqrt{2 \pi})^N} \int_{S} e_{1}^{a, d} \big(\boldsymbol{x} \big) h_{\boldsymbol{\Lambda}, \lambda} \big( \boldsymbol{x} \big) d\boldsymbol{x}, ~~~\forall S \in \mathcal{M}.
		\end{eqnarray*}
		We have, $\frac{C_{\boldsymbol{\Lambda}}}{(\sqrt{2 \pi})^N} e_{1}^{a, d} \big(\boldsymbol{x} \big) h_{\boldsymbol{\Lambda}, \lambda} \big( \boldsymbol{x}\big) \geq 0$, $\forall \boldsymbol{x} \in \mathbb{R}^N$. By the Lemma \ref{lem1}-(2), $\mu (\mathbb{R}^N) = 1$.
		Therefore, $\mu$ is a probability measure on $\mathcal{M}$. For a fixed $\boldsymbol{x} \in \mathbb{R}^N$, define
		\begin{eqnarray*}
			F_{\boldsymbol{x}}(\boldsymbol{t}) = \Big| (e^{a, d}_{1}f)(\boldsymbol{x}-\boldsymbol{t}) - (e^{a, d}_{1}f)(\boldsymbol{x}) \Big|, \forall \boldsymbol{t} \in \mathbb{R}^N.
		\end{eqnarray*} 
		Then by applying Holder's inequality and Lemma \ref{lem1}-(2), we get
		\begin{eqnarray*}
			&&\int_{\mathbb{R}^N} F_{\boldsymbol{x}}(\boldsymbol{t}) d\mu(\boldsymbol{t}) \\&=& \frac{C_{\boldsymbol{\Lambda}}}{(\sqrt{2 \pi})^N} \int_{\mathbb{R}^N} F_{\boldsymbol{x}}(\boldsymbol{t}) e_{1}^{a, d} \big(\boldsymbol{t} \big) h_{\boldsymbol{\Lambda}, \lambda} \big( \boldsymbol{t} \big) d\boldsymbol{t}
			\\&\leq& \frac{C_{\boldsymbol{\Lambda}}}{(\sqrt{2 \pi})^N} \Big( \int_{\mathbb{R}^N} \big| (e^{a, d}_{1}f)(\boldsymbol{x}-\boldsymbol{t}) \big|e_{1}^{a, d} \big(\boldsymbol{t} \big) h_{\boldsymbol{\Lambda}, \lambda} \big( \boldsymbol{t} \big) d\boldsymbol{t} + \int_{\mathbb{R}^N} \big| (e^{a, d}_{1}f)(\boldsymbol{x}) \big| e_{1}^{a, d} \big(\boldsymbol{t} \big) h_{\boldsymbol{\Lambda}, \lambda} \big( \boldsymbol{t} \big) d\boldsymbol{t} \Big)
			\\&=& \frac{C_{\boldsymbol{\Lambda}}}{(\sqrt{2 \pi})^N}  \int_{\mathbb{R}^N} \big|f(\boldsymbol{x}-\boldsymbol{t}) \big| \big(e_{1}^{a, d} (\boldsymbol{t}) h_{\boldsymbol{\Lambda}, \lambda} (\boldsymbol{t}) \big) d\boldsymbol{t} + |f(\boldsymbol{x})|
			\\&\leq& \frac{C_{\boldsymbol{\Lambda}}}{(\sqrt{2 \pi})^N} \Big(\int_{\mathbb{R}^N} \big|f(\boldsymbol{x}-\boldsymbol{t}) \big|^p d\boldsymbol{t} \Big)^\frac{1}{p} \Big(\int_{\mathbb{R}^N} \big|e_{1}^{a, d} (\boldsymbol{t}) h_{\boldsymbol{\Lambda}, \lambda} (\boldsymbol{t}) \big|^q d\boldsymbol{t} \Big)^\frac{1}{q}  + |f(\boldsymbol{x})|
			\\&=& \frac{C_{\boldsymbol{\Lambda}}}{(\sqrt{2 \pi})^N} ||f||_p ||h_{\boldsymbol{\Lambda}, \lambda}||_q + |f(\boldsymbol{x})|
			~~<~ \infty.
		\end{eqnarray*}
		i.e.,
		\begin{eqnarray*}
			\int_{\mathbb{R}^N} F_{\boldsymbol{x}}(\boldsymbol{t}) d\mu(\boldsymbol{t}) < \infty.
		\end{eqnarray*}
		Thus $F_{\boldsymbol{x}}$ is an integrable function on $\mathbb{R}^2$ with respect to measure $\mu$.
		\\Let $\phi(r) = r^p$. Then 
		\begin{eqnarray*}
			\phi'' (r) = p(p-1)r^{p-2} \geq 0 \text{ on } [0, \infty).
		\end{eqnarray*}
		Thus $\phi$ is a convex function on $[0, \infty)$. Hence, by applying Jensen's inequality \cite{Rudin},
		\begin{eqnarray*}
			\phi\Big( \int_{\mathbb{R}^N} F_{\boldsymbol{x}}(\boldsymbol{t}) d\mu(\boldsymbol{t}) \Big) \leq  \int_{\mathbb{R}^N} (\phi \circ F_{\boldsymbol{x}})(\boldsymbol{t}) d\mu(\boldsymbol{t}). 
		\end{eqnarray*}
		i.e.,
		\begin{eqnarray}\label{f}
			\Big( \int_{\mathbb{R}^N} F_{\boldsymbol{x}}(\boldsymbol{t}) d\mu(\boldsymbol{t}) \Big)^p \leq  \int_{\mathbb{R}^N} (F_{\boldsymbol{x}} (\boldsymbol{t}))^p d\mu(\boldsymbol{t}).
		\end{eqnarray}
		By using Lemmas \ref{lem1} and \eqref{f}, we get
		\begin{eqnarray*}
			&&|(f ~{\otimes}_{\boldsymbol{\Lambda}, 1}~ h_{\boldsymbol{\Lambda}, \lambda})(\boldsymbol{x}) - f(\boldsymbol{x})|^p \\&=& \bigg|\bigg(\frac{C_{\boldsymbol{\Lambda}}}{(\sqrt{2 \pi})^N} e^{a, d}_{-1}(\boldsymbol{x}) \int_{\mathbb{R}^N} \big((e^{a, d}_{1}f)(\boldsymbol{x}-\boldsymbol{t}) (e^{a, d}_{1}h_{\boldsymbol{\Lambda}, \lambda})(\boldsymbol{t}) \big) d\boldsymbol{t}\bigg) - f(\boldsymbol{x})	\frac{C_{\boldsymbol{\Lambda}}}{(\sqrt{2 \pi})^N} \int_{\mathbb{R}^N} (e_{1}^{a, d} h_{\boldsymbol{\Lambda}, \lambda}) ( \boldsymbol{t} ) d\boldsymbol{t} \bigg|^p
			\\&\leq& \bigg( \frac{|C_{\boldsymbol{\Lambda}|}}{(\sqrt{2 \pi})^N} \int_{\mathbb{R}^N} \Big| (e^{a, d}_{1}f)(\boldsymbol{x}-\boldsymbol{t}) - (e^{a, d}_{1}f)(\boldsymbol{x}) \Big|  (e_{1}^{a, d} h_{\boldsymbol{\Lambda}, \lambda} )( \boldsymbol{t}) d\boldsymbol{t} \bigg)^p
			\\&=& \bigg( \frac{|C_{\boldsymbol{\Lambda}|}}{(\sqrt{2 \pi})^N} \int_{\mathbb{R}^N} F_{\boldsymbol{x}}(\boldsymbol{t})  (e_{1}^{a, d} h_{\boldsymbol{\Lambda}, \lambda} )(\boldsymbol{t}) d\boldsymbol{t} \bigg)^p
			\\&=& \Big( \int_{\mathbb{R}^N} F_{\boldsymbol{x}}(\boldsymbol{t})d\mu(\boldsymbol{t}) \Big)^p
			\\&\leq& \int_{\mathbb{R}^N} \Big(  F_{\boldsymbol{x}}(\boldsymbol{t}) \Big)^p d\mu(\boldsymbol{t})
			\\&=& \frac{|C_{\boldsymbol{\Lambda}}|}{(\sqrt{2 \pi})^N} \int_{\mathbb{R}^N} \Big| (e^{a, d}_{1}f)(\boldsymbol{x}-\boldsymbol{t}) - (e^{a, d}_{1}f)(\boldsymbol{x}) \Big|^p  (e_{1}^{a, d} h_{\boldsymbol{\Lambda}, \lambda} )(\boldsymbol{t})  d\boldsymbol{t}. 		
		\end{eqnarray*} 
		Rewriting the above expression using \ref{def4},  $(\tau_{\boldsymbol{t}} f) (\boldsymbol{x}) = f(\boldsymbol{x}-\boldsymbol{t})$, $\forall \boldsymbol{x} \in \mathbb{R}^N$, we get
		\begin{eqnarray*}
			|(f ~{\otimes}_{\boldsymbol{\Lambda}, 1}~ h_{\boldsymbol{\Lambda}, \lambda})(\boldsymbol{x}) - f(x)|^p &\leq& \frac{|C_{\boldsymbol{\Lambda}|}}{(\sqrt{2 \pi})^N} \int_{\mathbb{R}^N} \Big| \big(\tau_{\boldsymbol{t}}(e^{a, d}_{1}f)\big)(\boldsymbol{x}) - (e^{a, d}_{1}f)(\boldsymbol{x}) \Big|^p  (e_{1}^{a, d} h_{\boldsymbol{\Lambda}, \lambda} )(\boldsymbol{t})  d\boldsymbol{t}. 
		\end{eqnarray*}
		Multiplying by $\frac{|C_{\boldsymbol{\Lambda}}|}{(\sqrt{2 \pi})^N}$ and integrating with respect to $\boldsymbol{x}$ gives
		\begin{eqnarray*}
			&& \frac{|C_{\boldsymbol{\Lambda}}|}{(\sqrt{2 \pi})^N}\int_{\mathbb{R}^N}|(f ~{\otimes}_{\boldsymbol{\Lambda}, 1}~ h_{\boldsymbol{\Lambda}, \lambda})(\boldsymbol{x}) - f(x)|^p d\boldsymbol{x} 
			\\&\leq& \bigg( \frac{|C_{\boldsymbol{\Lambda}}|}{(\sqrt{2 \pi})^N}\bigg)^2 \int_{\mathbb{R}^N} \int_{\mathbb{R}^N} \Big| \big(\tau_{\boldsymbol{t}}(e^{a, d}_{1}f)\big)(\boldsymbol{x}) - (e^{a, d}_{1}f)(\boldsymbol{x}) \Big|^p  (e_{1}^{a, d} h_{\boldsymbol{\Lambda}, \lambda})(\boldsymbol{t})  d\boldsymbol{t} d\boldsymbol{x}
			\\&=& \frac{|C_{\boldsymbol{\Lambda}}|}{(\sqrt{2 \pi})^N} \int_{\mathbb{R}^N} \big| \big| \big(\tau_{\boldsymbol{t}}(e^{a, d}_{1}f)\big)(\boldsymbol{x}) - (e^{a, d}_{1}f)(\boldsymbol{x})\big| \big|^p_p (e_{1}^{a, d} h_{\boldsymbol{\Lambda}, \lambda})(\boldsymbol{t}) d\boldsymbol{t}. 
		\end{eqnarray*}
		Let
		\begin{eqnarray*}
			g(\boldsymbol{t}) = \big| \big| \big(\tau_{-\boldsymbol{t}}(e^{a, d}_{1}f)\big)(\boldsymbol{x}) - (e^{a, d}_{1}f)(\boldsymbol{x})\big| \big|^p_p e^{a, d}_{1} (\boldsymbol{t}).
		\end{eqnarray*}
		Then
		\begin{eqnarray*}
			(ge^{a, d}_{-1})(\boldsymbol{-t}) &=& \big| \big| \big(\tau_{\boldsymbol{t}}(e^{a, d}_{1}f)\big)(\boldsymbol{x}) - (e^{a, d}_{1}f)(\boldsymbol{x})\big| \big|^p_p.
		\end{eqnarray*}
		Therefore
		\begin{eqnarray*}
			\int_{\mathbb{R}^N}|(f ~{\otimes}_{\boldsymbol{\Lambda}, 1}~ h_{\boldsymbol{\Lambda}, \lambda})(\boldsymbol{x}) - f(x)|^p d\boldsymbol{x} 
			&\leq& \frac{|C_{\boldsymbol{\Lambda}}|}{(\sqrt{2 \pi})^N} \int_{\mathbb{R}^N} (ge^{a, d}_{-1})(\boldsymbol{-t}) (e_{1}^{a, d} h_{\boldsymbol{\Lambda}, \lambda})(\boldsymbol{t}) d\boldsymbol{t}
			\\&=& (g \otimes_{\tiny \boldsymbol{\Lambda}, \lambda} h_{\tiny \boldsymbol{\Lambda}, \lambda}) (\boldsymbol{0}). 
		\end{eqnarray*}
		Since $g(\boldsymbol{t})$ is continuous and satisfies the bound $(2||f||_p)^p$. By Lemma \ref{lem1}, $h_{\tiny \boldsymbol{\Lambda}, \lambda}(\boldsymbol{t})$ is tending to $0$, then $g(\boldsymbol{0})$ is zero and $f \otimes_{\boldsymbol{\Lambda}, \lambda} h \to f$. Thus, the proof of the theorem is complete.
	\end{proof}
	\begin{theorem}($L^1$- inversion theorem) \label{inv1}
		Let $\mathcal{Q}_{\Lambda}f \in L^1(\mathbb{R}^N)$. Then 
		\begin{eqnarray*}
			f(\boldsymbol{x}) = \int_{\mathbb{R}^N} (\mathcal{Q}_{\boldsymbol{\Lambda}} f) (\boldsymbol{t}) \mathcal{K}_{\boldsymbol{-\Lambda}} (\boldsymbol{t}, \boldsymbol{\omega}) d\boldsymbol{t}.
		\end{eqnarray*}
		The above analogy can be written as
		\begin{eqnarray*}
			(\mathcal{Q}_{-\boldsymbol{\Lambda}} \circ \mathcal{Q}_{\boldsymbol{\Lambda}})(f) = f.
		\end{eqnarray*}
		\begin{proof}
			Let $f$ be in $L^1(\mathbb{R}^N)$ and $h_{\boldsymbol{\Lambda}, \lambda}$ be defined as in Lemma \ref{lem1} for $\lambda > 0$. Then, we obtain
			\begin{eqnarray*}
				&&(f ~{\otimes}_{\boldsymbol{\Lambda}, 1}~ h_{\boldsymbol{\Lambda}, \lambda})(\boldsymbol{x}) 
				\\&=& \frac{C_{\boldsymbol{\Lambda}}}{(\sqrt{2 \pi})^N} e^{a, d}_{-1}(\boldsymbol{x}) \int_{\mathbb{R}^N} ((e^{a, d}_{1}f)(\boldsymbol{x}-\boldsymbol{y}) (e^{a, d}_{1})(\boldsymbol{y})\Big( \int_{\mathbb{R}^N} H_\lambda(\boldsymbol{t}) e_1^{\boldsymbol{c, e}} (\boldsymbol{t}) \mathcal{K}_{-\boldsymbol{\Lambda}} (\boldsymbol{t}, \boldsymbol{y}) d\boldsymbol{t}\Big)  d\boldsymbol{y}
				\\&=& \frac{C_{\boldsymbol{\Lambda}}C_{\boldsymbol{-\Lambda}}}{(2 \pi)^N} e^{a, d}_{-1}(\boldsymbol{x}) \int_{\mathbb{R}^N} (e^{a, d}_{1}f)(\boldsymbol{x}-\boldsymbol{y}) \Big(\int_{\mathbb{R}^N} H_\lambda(\boldsymbol{t}) e^{-i\sum_{k=1}^{N}b_k y_k t_k} d\boldsymbol{t}\Big) d\boldsymbol{y}.
			\end{eqnarray*}
			Since $f$ and $H_{\lambda}(\boldsymbol{x})$ belong to $L^1(\mathbb{R}^N)$, we can apply the Fubini's theorem, which gives us
			\begin{eqnarray*}
				(f ~{\otimes}_{\boldsymbol{\Lambda}, 1}~ h_{\boldsymbol{\Lambda}, \lambda})(\boldsymbol{x}) 
				&=& \frac{C_{\boldsymbol{\Lambda}}C_{\boldsymbol{-\Lambda}}}{(2 \pi)^N} e^{a, d}_{-1}(\boldsymbol{x}) \int_{\mathbb{R}^N} H_\lambda(\boldsymbol{t}) \Big( \int_{\mathbb{R}^N} (e^{a,d}_{1}f)(\boldsymbol{x}-\boldsymbol{y})  e^{-i\sum_{k=1}^{N}b_k y_k t_k} d\boldsymbol{y}\Big)  d\boldsymbol{t}.
			\end{eqnarray*}	
			Substituting $\boldsymbol{z}= \boldsymbol{x}- \boldsymbol{y}$, we arrive at
			\begin{eqnarray*}
				(f ~{\otimes}_{\boldsymbol{\Lambda}, 1}~ h_{\boldsymbol{\Lambda}, \lambda})(\boldsymbol{x}) 
				&=& \frac{C_{\boldsymbol{\Lambda}}C_{\boldsymbol{-\Lambda}}}{(2 \pi)^N} e^{a, d}_{-1}(\boldsymbol{x})  \int_{\mathbb{R}^N} H_\lambda(\boldsymbol{t}) e^{-i\sum_{k=1}^{N}b_k x_k t_k} \Big(\int_{\mathbb{R}^N} (e^{a, d}_{1}f)(\boldsymbol{z})  e^{i\sum_{k=1}^{N}b_k z_k t_k} d\boldsymbol{z}\Big)  d\boldsymbol{t}
				\\&=& \frac{C_{\boldsymbol{-\Lambda}}}{(2 \pi)^N} e^{a, d}_{-1}(\boldsymbol{x})  \int_{\mathbb{R}^N} H_\lambda(\boldsymbol{t}) e^{-i\sum_{k=1}^{N}b_k x_k t_k} \Big(\int_{\mathbb{R}^N} (e^{c, e}_{-1})(\boldsymbol{t}) f(\boldsymbol{z}) \mathcal{K}_{\boldsymbol{\Lambda}}(\boldsymbol{t}, \boldsymbol{z}) d\boldsymbol{z}\Big)  d\boldsymbol{t}
				\\&=& \frac{C_{\boldsymbol{-\Lambda}}}{(2 \pi)^N} e^{a, d}_{-1}(\boldsymbol{x})  \int_{\mathbb{R}^N} H_\lambda(\boldsymbol{t}) e^{-i\sum_{k=1}^{N}b_k x_k t_k} (e^{c, e}_{-1})(\boldsymbol{t}) \mathcal{Q}_{\boldsymbol{\Lambda}}(f)(\boldsymbol{t})d\boldsymbol{t}
				\\&=& \int_{\mathbb{R}^N} H_\lambda(\boldsymbol{t}) \mathcal{Q}_{\boldsymbol{\Lambda}}(f)(\boldsymbol{t}) \mathcal{K}_{\boldsymbol{-\Lambda}}(\boldsymbol{t}, \boldsymbol{x}) d\boldsymbol{t}
				\\&\to& \int_{\mathbb{R}^N} \mathcal{Q}_{\boldsymbol{\Lambda}}(f)(\boldsymbol{t}) \mathcal{K}_{\boldsymbol{-\Lambda}}(\boldsymbol{t}, \boldsymbol{x})d\boldsymbol{t}, \text{ as } \lambda \to 0,
			\end{eqnarray*}
			since by Lemma \ref{lem1}-(1), we have $H_\lambda(\boldsymbol{t})$ tends to $1$ as $\lambda$ tends to $0$ for each $\boldsymbol{t} \in \mathbb{R}^N$, applying the dominated convergence theorem gives
			\begin{eqnarray*}
				(f ~{\otimes}_{\boldsymbol{\Lambda}, 1}~ h_{\boldsymbol{\Lambda}, \lambda})(\boldsymbol{x}) \to f(\boldsymbol{x}).
			\end{eqnarray*}
			Hence we obtain the required result.
		\end{proof}
	\end{theorem}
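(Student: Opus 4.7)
The plan is to exploit the preceding approximation theorem, which shows $(f \otimes_{\boldsymbol{\Lambda}, 1} h_{\boldsymbol{\Lambda}, \lambda}) \to f$ in $L^1(\mathbb{R}^N)$ as $\lambda \to 0$, and independently to identify this same convolution with an integral against $\mathcal{K}_{-\boldsymbol{\Lambda}}$ that tends to the claimed inversion formula. Uniqueness of limits (after passing to a subsequence, if one wants pointwise agreement a.e.) then forces the two expressions to coincide.

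First I would unfold $(f \otimes_{\boldsymbol{\Lambda}, 1} h_{\boldsymbol{\Lambda}, \lambda})(\boldsymbol{x})$ using Definition \ref{def3} with $\lambda = 1$, substituting for $h_{\boldsymbol{\Lambda}, \lambda}(\boldsymbol{y})$ from equation \eqref{h}. Since $f \in L^1(\mathbb{R}^N)$ and $H_\lambda \in L^1(\mathbb{R}^N)$ (with all oscillatory factors of modulus one), Fubini's theorem justifies swapping the order of integration so that the inner integration runs over the convolution variable $\boldsymbol{y}$. After the change of variable $\boldsymbol{z} = \boldsymbol{x} - \boldsymbol{y}$, the inner integral over $\boldsymbol{z}$ should be repackaged into the kernel $\mathcal{K}_{\boldsymbol{\Lambda}}(\boldsymbol{t}, \boldsymbol{z})$ by grouping the phase factors $e_1^{\boldsymbol{a,d}}(\boldsymbol{z})$ with the frequency $e^{i \sum b_k z_k t_k}$ and the constant $C_{\boldsymbol{\Lambda}}/(2\pi)^{N/2}$. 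This collapses the inner integral to a multiple of $\mathcal{Q}_{\boldsymbol{\Lambda}}(f)(\boldsymbol{t})$, while the remaining $\boldsymbol{t}$-dependent factors $e^{-i\sum b_k x_k t_k}$, $e_{-1}^{\boldsymbol{a,d}}(\boldsymbol{x})$, and $e_{-1}^{\boldsymbol{c,e}}(\boldsymbol{t})$, together with $C_{-\boldsymbol{\Lambda}}/(2\pi)^{N/2}$, reassemble to $\mathcal{K}_{-\boldsymbol{\Lambda}}(\boldsymbol{t}, \boldsymbol{x})$. The outcome is the identity
\begin{equation*}
(f \otimes_{\boldsymbol{\Lambda}, 1} h_{\boldsymbol{\Lambda}, \lambda})(\boldsymbol{x}) = \int_{\mathbb{R}^N} H_\lambda(\boldsymbol{t})\, \mathcal{Q}_{\boldsymbol{\Lambda}}(f)(\boldsymbol{t})\, \mathcal{K}_{-\boldsymbol{\Lambda}}(\boldsymbol{t}, \boldsymbol{x})\, d\boldsymbol{t}.
\end{equation*}

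Next I would pass to the limit $\lambda \to 0$ on the right-hand side. The integrand is dominated by $|\mathcal{Q}_{\boldsymbol{\Lambda}}(f)(\boldsymbol{t})|\cdot |\mathcal{K}_{-\boldsymbol{\Lambda}}(\boldsymbol{t},\boldsymbol{x})|$, which is integrable because $\mathcal{Q}_{\boldsymbol{\Lambda}} f \in L^1(\mathbb{R}^N)$ and $|\mathcal{K}_{-\boldsymbol{\Lambda}}|$ is a constant depending only on $\boldsymbol{\Lambda}$. By Lemma \ref{lem1}(1), $H_\lambda(\boldsymbol{t}) \to 1$ pointwise, so the dominated convergence theorem yields $\int_{\mathbb{R}^N} \mathcal{Q}_{\boldsymbol{\Lambda}}(f)(\boldsymbol{t}) \mathcal{K}_{-\boldsymbol{\Lambda}}(\boldsymbol{t}, \boldsymbol{x})\, d\boldsymbol{t}$. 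On the other hand, the preceding $L^1$-approximation theorem guarantees that the left-hand side converges to $f$ in $L^1(\mathbb{R}^N)$, hence along some subsequence $\lambda_n \to 0$ pointwise almost everywhere. Matching the two limits delivers the inversion formula for a.e. $\boldsymbol{x}$.

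I expect the main obstacle to be the bookkeeping in the second step: correctly reassembling the product of exponentials into $\mathcal{K}_{-\boldsymbol{\Lambda}}(\boldsymbol{t},\boldsymbol{x})$ while keeping track of the constants $C_{\boldsymbol{\Lambda}}$, $C_{-\boldsymbol{\Lambda}}$, and the normalization $(2\pi)^{-N/2}$ supplied by both the QPFT and the Fourier convolution. A secondary technical point is the justification of Fubini, which requires checking absolute integrability of the joint integrand in $(\boldsymbol{y}, \boldsymbol{t})$; this is straightforward since the oscillatory factors have modulus one and $f \otimes |H_\lambda|$ is integrable for each fixed $\lambda > 0$. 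Once the algebraic identity is secured, the convergence is a clean application of dominated convergence combined with uniqueness of limits modulo null sets.
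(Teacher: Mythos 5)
Your proposal follows essentially the same route as the paper's proof: unfold the convolution with $h_{\boldsymbol{\Lambda},\lambda}$, apply Fubini, change variables to recognize the inner integral as $\mathcal{Q}_{\boldsymbol{\Lambda}}(f)(\boldsymbol{t})$ and the remaining factors as $\mathcal{K}_{-\boldsymbol{\Lambda}}(\boldsymbol{t},\boldsymbol{x})$, then let $\lambda\to 0$ using dominated convergence on one side and the approximate-identity theorem on the other. Your explicit remark that the $L^1$-convergence of $f\otimes_{\boldsymbol{\Lambda},1}h_{\boldsymbol{\Lambda},\lambda}$ to $f$ must be upgraded to pointwise a.e.\ convergence along a subsequence before matching limits is a point the paper glosses over, and is a welcome refinement of the same argument.
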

	\begin{theorem}($L^2$- inversion theorem)
		Let $f$ be a function in $L^2 (\mathbb{R}^N)$. Then 
		\begin{eqnarray*}
			(\mathcal{Q}_{\boldsymbol{-\Lambda}} \circ \mathcal{Q}_{\boldsymbol{\Lambda}})(f) = f.	
		\end{eqnarray*}
	\end{theorem}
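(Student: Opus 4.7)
The plan is to bootstrap the $L^1$-inversion theorem (Theorem \ref{inv1}) to $L^2(\mathbb{R}^N)$ by combining it with the Plancherel-type identity of Corollary \ref{nm} and a standard density argument. The two ingredients we need are: (i) a dense subspace $\mathcal{D}\subset L^2(\mathbb{R}^N)$ on which $L^1$-inversion applies, and (ii) the extension of $\mathcal{Q}_{\boldsymbol{\Lambda}}$ and $\mathcal{Q}_{-\boldsymbol{\Lambda}}$ to bounded (in fact isometric) operators on $L^2(\mathbb{R}^N)$.

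For the dense subspace I would take the Schwartz class $\mathcal{S}(\mathbb{R}^N)$. Using the representation
\begin{eqnarray*}
(\mathcal{Q}_{\boldsymbol{\Lambda}} f)(\boldsymbol{\omega}) = C_{\boldsymbol{\Lambda}}\, e_1^{\boldsymbol{c,e}}(\boldsymbol{\omega})\, \widehat{(e_1^{\boldsymbol{a,d}} f)}(\boldsymbol{b\omega})
\end{eqnarray*}
one checks that $f \in \mathcal{S}(\mathbb{R}^N)$ implies $\mathcal{Q}_{\boldsymbol{\Lambda}} f \in \mathcal{S}(\mathbb{R}^N)$: the unimodular quadratic-phase factors $e_1^{\boldsymbol{a,d}}$ and $e_1^{\boldsymbol{c,e}}$ are smooth of polynomial growth together with all derivatives, multiplication by such factors preserves $\mathcal{S}(\mathbb{R}^N)$, the multidimensional Fourier transform is an automorphism of $\mathcal{S}(\mathbb{R}^N)$, and the linear change of variable $\boldsymbol{\omega}\mapsto\boldsymbol{b\omega}$ is a bijection of $\mathcal{S}(\mathbb{R}^N)$ since each $b_k\neq 0$. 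In particular both $f$ and $\mathcal{Q}_{\boldsymbol{\Lambda}} f$ belong to $L^1(\mathbb{R}^N)$, so Theorem \ref{inv1} applies and gives $(\mathcal{Q}_{-\boldsymbol{\Lambda}}\circ\mathcal{Q}_{\boldsymbol{\Lambda}})(f) = f$ for every $f\in\mathcal{S}(\mathbb{R}^N)$.

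For the extension, Corollary \ref{nm} shows that $\mathcal{Q}_{\boldsymbol{\Lambda}}$ is isometric on the dense subspace $L^1(\mathbb{R}^N)\cap L^2(\mathbb{R}^N)$, hence extends uniquely to an isometry $\mathcal{Q}_{\boldsymbol{\Lambda}}\colon L^2(\mathbb{R}^N)\to L^2(\mathbb{R}^N)$; the same applies to $\mathcal{Q}_{-\boldsymbol{\Lambda}}$. Given an arbitrary $f\in L^2(\mathbb{R}^N)$, I pick $f_n\in\mathcal{S}(\mathbb{R}^N)$ with $\|f_n-f\|_2\to 0$. Applying the $L^2$-continuity of $\mathcal{Q}_{-\boldsymbol{\Lambda}}\circ\mathcal{Q}_{\boldsymbol{\Lambda}}$ (it is a composition of two isometries, hence itself an isometry) yields
\begin{eqnarray*}
\|(\mathcal{Q}_{-\boldsymbol{\Lambda}}\circ\mathcal{Q}_{\boldsymbol{\Lambda}})(f) - (\mathcal{Q}_{-\boldsymbol{\Lambda}}\circ\mathcal{Q}_{\boldsymbol{\Lambda}})(f_n)\|_2 = \|f - f_n\|_2 \longrightarrow 0,
\end{eqnarray*}
while the previous paragraph gives $(\mathcal{Q}_{-\boldsymbol{\Lambda}}\circ\mathcal{Q}_{\boldsymbol{\Lambda}})(f_n) = f_n \to f$ in $L^2(\mathbb{R}^N)$. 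Uniqueness of $L^2$-limits then forces $(\mathcal{Q}_{-\boldsymbol{\Lambda}}\circ\mathcal{Q}_{\boldsymbol{\Lambda}})(f) = f$.

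The only real obstacle is the verification that $\mathcal{Q}_{\boldsymbol{\Lambda}}$ preserves $\mathcal{S}(\mathbb{R}^N)$ (so that Theorem \ref{inv1} can be invoked on the dense subspace); this is essentially the same stability that the classical Fourier transform enjoys, combined with multiplication by a unimodular quadratic-phase factor and a non-degenerate linear dilation, and is routine. Once this is in hand, the rest of the argument is the standard ``isometry plus density'' extension.
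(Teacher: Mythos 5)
Your argument is correct, and it shares the paper's overall skeleton---establish the identity on a dense subclass to which the $L^1$-inversion theorem (Theorem \ref{inv1}) applies, then extend using the isometry of Corollary \ref{nm} and density---but the dense subclass you choose is genuinely different, and this changes which auxiliary results are needed. The paper stays entirely inside its own convolution machinery: for $f\in L^1(\mathbb{R}^N)\cap L^2(\mathbb{R}^N)$ it forms the mollification $f\otimes_{\boldsymbol{\Lambda},1}h_{\boldsymbol{\Lambda},\lambda}$ with the approximate identity $h_{\boldsymbol{\Lambda},\lambda}$ of Lemma \ref{lem1}, invokes the type-3 convolution theorem (Theorem \ref{conv2}) to see that its QPFT is the product $e^{c,e}_{-1}\,\mathcal{Q}_{\boldsymbol{\Lambda}}(f)\,\mathcal{Q}_{\boldsymbol{\Lambda}}(h_{\boldsymbol{\Lambda},\lambda})\in L^1(\mathbb{R}^N)$, applies Theorem \ref{inv1} to the mollified function, and then lets $\lambda\to 0$ using the $L^p$-approximation theorem before the final density step from $L^1\cap L^2$ to $L^2$. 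You instead take the Schwartz class, whose stability under $\mathcal{Q}_{\boldsymbol{\Lambda}}$ (via the factorization through the classical Fourier transform, multiplication by the chirps $e_1^{\boldsymbol{a,d}}$, $e_1^{\boldsymbol{c,e}}$, which are smooth with polynomially bounded derivatives, and the nondegenerate dilation $\boldsymbol{\omega}\mapsto\boldsymbol{b\omega}$) places you directly in the hypotheses of Theorem \ref{inv1}, so only one density step is needed. Your route is shorter and bypasses Lemma \ref{lem1}, Theorem \ref{conv2} and the approximation theorem altogether, at the cost of the verification that $\mathcal{Q}_{\boldsymbol{\Lambda}}$ maps $\mathcal{S}(\mathbb{R}^N)$ into itself, which is routine but is not proved anywhere in the paper; the paper's route is longer but reuses only results it has already established for the $L^1$ theory. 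Both proofs ultimately rest on the same final observation, namely that $\mathcal{Q}_{-\boldsymbol{\Lambda}}\circ\mathcal{Q}_{\boldsymbol{\Lambda}}$ is $L^2$-continuous because each factor is an isometry by Parseval, so your proposal is a valid alternative rather than a reproduction of the paper's argument.
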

	\begin{proof}
		For $f \in L^1(\mathbb{R}^N) \cap L^2(\mathbb{R}^N)$, since $h_{\boldsymbol{\Lambda}, \lambda} \in L^p(\mathbb{R}^N)$ for $p \in [1, \infty)$, it follows that $f \otimes_{\boldsymbol{\Lambda}, 1} h_{\boldsymbol{\Lambda}, \lambda} \in L^1(\mathbb{R}^N)$. Moreover, since $\mathcal{Q}_{\boldsymbol{\Lambda}}f$ and $\mathcal{Q}_{\boldsymbol{\Lambda}} h_{\boldsymbol{\Lambda}, \lambda}$ belong to $L^2(\mathbb{R}^N)$, their product is in $L^1(\mathbb{R}^N)$. Applying the Theorem \ref{conv2}, we obtain
		\begin{eqnarray*}
			\mathcal{Q}_{\Lambda} (f \otimes_{\boldsymbol{\Lambda}, 1} h_{\boldsymbol{\Lambda}, \lambda})(\boldsymbol{\omega}) = e_{-1}^{c,e}(\boldsymbol{\omega}) \mathcal{Q}_{\boldsymbol{\Lambda}}(f)(\boldsymbol{\omega}) \mathcal{Q}_{\boldsymbol{\Lambda}}(h_{\boldsymbol{\Lambda}, \lambda})(\boldsymbol{\omega}).
		\end{eqnarray*}
		Thus $\mathcal{Q}_{\Lambda} (f \otimes_{\boldsymbol{\Lambda}, 1} h_{\boldsymbol{\Lambda}, \lambda})$ belongs to $L^1(\mathbb{R}^N) \cap L^2(\mathbb{R}^N)$. Furthermore, by applying Theorem \ref{inv1}, we get
		\begin{eqnarray*}
			(\mathcal{Q}_{-\Lambda} \circ \mathcal{Q}_{\Lambda}) (f \otimes_{\boldsymbol{\Lambda}, 1} h_{\boldsymbol{\Lambda}, \lambda}) = (f \otimes_{\boldsymbol{\Lambda}, 1} h_{\boldsymbol{\Lambda}, \lambda}).
		\end{eqnarray*}
		We have, $f \otimes_{\boldsymbol{\Lambda}, 1} h_{\boldsymbol{\Lambda}, \lambda} \to f \in L^2(\mathbb{R}^N)$ as $\lambda \to 0$ and $\mathcal{Q}_{\boldsymbol{-\Lambda}} \circ \mathcal{Q}_{\boldsymbol{\Lambda}}$ is continuous on $L^2(\mathbb{R}^N)$. Hence,
		\begin{eqnarray*}
			(\mathcal{Q}_{-\Lambda} \circ \mathcal{Q}_{\Lambda}) (f \otimes_{\boldsymbol{\Lambda}, 1} h_{\boldsymbol{\Lambda}, \lambda}) \to (\mathcal{Q}_{-\Lambda} \circ \mathcal{Q}_{\Lambda}) (f) \in L^(\mathbb{R}^N) \text{ as } \lambda \to 0. 
		\end{eqnarray*}
		Consequently
		\begin{eqnarray*}
			(\mathcal{Q}_{-\Lambda} \circ \mathcal{Q}_{\Lambda}) (f) = f,~~ \forall f \in L^1(\mathbb{R}^N) \cap L^2(\mathbb{R}^N). 
		\end{eqnarray*}
		By again employing the continuity of $	(\mathcal{Q}_{-\Lambda} \circ \mathcal{Q}_{\Lambda})$ on $L^(\mathbb{R}^N)$, and the density of $L^1(\mathbb{R}^N) \cap L^2(\mathbb{R}^N)$, we deduce
		\begin{eqnarray*}
			(\mathcal{Q}_{-\Lambda} \circ \mathcal{Q}_{\Lambda}) (f) = f,~~ \forall f \in L^2(\mathbb{R}^N).
		\end{eqnarray*}
		Hence, we get the desired result.
	\end{proof}
	\begin{theorem}
		Let $f$ be in $L^2(\mathbb{R}^N)$ and $\mathcal{Q}_{\boldsymbol{\Lambda}}f$ be in  $L^1(\mathbb{R}^N)$. Then 
		\begin{eqnarray*}
			f(\boldsymbol{x}) = \int_{\mathbb{R}^N} (\mathcal{Q}_{\boldsymbol{\Lambda}} f) (\boldsymbol{t}) \mathcal{K}_{-\Lambda}(\boldsymbol{t}, \boldsymbol{x}) d\boldsymbol{t}.
		\end{eqnarray*}
	\end{theorem}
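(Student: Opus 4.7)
The plan is to identify the right-hand side of the claimed formula with the pointwise-integral expression for $\mathcal{Q}_{-\boldsymbol{\Lambda}}g$, where $g := \mathcal{Q}_{\boldsymbol{\Lambda}}f$, and to match it with the $L^2$-inverse of $g$ supplied by the preceding $L^2$-inversion theorem.

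First, I would observe that from $f \in L^2(\mathbb{R}^N)$ and Corollary \ref{nm}, $\|g\|_2 = \|f\|_2$, so $g \in L^2(\mathbb{R}^N)$; together with the hypothesis $g \in L^1(\mathbb{R}^N)$, this yields $g \in L^1(\mathbb{R}^N) \cap L^2(\mathbb{R}^N)$. Since $|\mathcal{K}_{-\boldsymbol{\Lambda}}(\boldsymbol{t},\boldsymbol{x})|$ equals the constant $|C_{-\boldsymbol{\Lambda}}|/(2\pi)^{N/2}$, the integral
\begin{equation*}
\tilde{f}(\boldsymbol{x}) := \int_{\mathbb{R}^N} g(\boldsymbol{t})\,\mathcal{K}_{-\boldsymbol{\Lambda}}(\boldsymbol{t},\boldsymbol{x})\,d\boldsymbol{t}
\end{equation*}
converges absolutely for every $\boldsymbol{x}$ and defines a bounded continuous function.

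The preceding $L^2$-inversion theorem yields $\mathcal{Q}_{-\boldsymbol{\Lambda}}g = f$ as elements of $L^2(\mathbb{R}^N)$, where $\mathcal{Q}_{-\boldsymbol{\Lambda}}$ is interpreted as the continuous $L^2$-extension. The essential step is thus to show that this $L^2$-extension applied to $g$ coincides almost everywhere with the pointwise integral $\tilde{f}$. To accomplish this, I would approximate $g$ by a sequence $\{g_n\}$ of Schwartz (or compactly supported smooth) functions that converges to $g$ \emph{simultaneously} in $L^1$ and $L^2$. For each such $g_n$, the pointwise integral $\tilde{f}_n(\boldsymbol{x}) := \int g_n(\boldsymbol{t})\,\mathcal{K}_{-\boldsymbol{\Lambda}}(\boldsymbol{t},\boldsymbol{x})\,d\boldsymbol{t}$ and the $L^2$-extension $\mathcal{Q}_{-\boldsymbol{\Lambda}}g_n$ coincide outright, both reducing to the same absolutely convergent Lebesgue integral. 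Using $\|g_n - g\|_1 \to 0$ and the uniform bound on the kernel gives $\tilde{f}_n \to \tilde{f}$ uniformly. Using Corollary \ref{nm} (applied with parameters $-\boldsymbol{\Lambda}$) together with $\|g_n - g\|_2 \to 0$ gives $\mathcal{Q}_{-\boldsymbol{\Lambda}}g_n \to \mathcal{Q}_{-\boldsymbol{\Lambda}}g = f$ in $L^2$. Extracting a subsequence converging almost everywhere reconciles the two limits and yields $\tilde{f}(\boldsymbol{x}) = f(\boldsymbol{x})$ for almost every $\boldsymbol{x}$, which is the required identity.

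The principal obstacle is precisely this consistency step, i.e.\ identifying the pointwise $L^1$-formula with the $L^2$-extension on the overlap $L^1(\mathbb{R}^N) \cap L^2(\mathbb{R}^N)$. It is classical for the standard Fourier transform, but here it requires bookkeeping of two distinct modes of convergence (uniform versus $L^2$-norm) and the selection of a common almost-everywhere subsequence. All remaining steps are direct appeals to the Plancherel-type identity (Corollary \ref{nm}) and the $L^2$-inversion theorem established above.
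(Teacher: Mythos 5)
Your proof is correct, but it follows a genuinely different route from the paper's. The paper disposes of this theorem in one line, ``the proof proceeds similarly to Theorem \ref{inv1}'': that is, it re-runs the summability-kernel argument, computing $(f\otimes_{\boldsymbol{\Lambda},1}h_{\boldsymbol{\Lambda},\lambda})(\boldsymbol{x})=\int_{\mathbb{R}^N}H_\lambda(\boldsymbol{t})(\mathcal{Q}_{\boldsymbol{\Lambda}}f)(\boldsymbol{t})\mathcal{K}_{-\boldsymbol{\Lambda}}(\boldsymbol{t},\boldsymbol{x})\,d\boldsymbol{t}$ and letting $\lambda\to 0$, where the hypothesis $\mathcal{Q}_{\boldsymbol{\Lambda}}f\in L^1(\mathbb{R}^N)$ justifies dominated convergence on the right (since $H_\lambda\to 1$ pointwise with $|H_\lambda|\le 1$) while the hypothesis $f\in L^2(\mathbb{R}^N)$ gives $f\otimes_{\boldsymbol{\Lambda},1}h_{\boldsymbol{\Lambda},\lambda}\to f$ in $L^2$, so the two limits are reconciled along an a.e.-convergent subsequence. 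You instead take the already-proved $L^2$-inversion theorem as a black box and reduce the problem to a consistency statement: that the continuous $L^2$-extension of $\mathcal{Q}_{-\boldsymbol{\Lambda}}$ agrees with the absolutely convergent pointwise integral on $L^1(\mathbb{R}^N)\cap L^2(\mathbb{R}^N)$, which you settle by simultaneous $L^1$/$L^2$ approximation, uniform convergence of the integrals versus $L^2$-convergence of the extensions, and a common a.e. subsequence. Your approach buys a cleaner modular structure and, notably, makes explicit a point the paper glosses over --- namely in what sense $\mathcal{Q}_{-\boldsymbol{\Lambda}}$ is even applied to $\mathcal{Q}_{\boldsymbol{\Lambda}}f$ when the latter is merely in $L^2$ --- at the cost of the extra density bookkeeping; the paper's approach is self-contained modulo the kernel computation of Theorem \ref{inv1} and never needs to discuss the $L^2$-extension at all, since the hypothesis $\mathcal{Q}_{\boldsymbol{\Lambda}}f\in L^1(\mathbb{R}^N)$ lets it work with the integral formula directly. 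Both arguments are sound.
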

	\begin{proof}
		The proof proceeds similarly to \ref{inv1}.
	\end{proof}
	\begin{theorem}(Product theorem)
		Let $f , g \in L^2(\mathbb{R}^N)$ and $\mathcal{Q}_{\boldsymbol{\Lambda'}}f, \mathcal{Q}_{\boldsymbol{\Lambda'}}g \in L^1(\mathbb{R}^N)$, then
		\begin{eqnarray*}
			(\mathcal{Q}_{\boldsymbol{\Lambda'}}f) \otimes_{\boldsymbol{-\Lambda}, \lambda} (\mathcal{Q}_{\boldsymbol{\Lambda'}}g) = \mathcal{Q}_{\boldsymbol{\Lambda'}}(e^{c, e}_{\lambda^2} fg).
		\end{eqnarray*}
	\end{theorem}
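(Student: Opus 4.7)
The plan is to derive the product theorem as a direct corollary of the convolution theorem (Theorem~\ref{conv2}) combined with the $L^2$-inversion theorem, by applying Theorem~\ref{conv2} ``backwards'' to the QPFT-transformed pair. The key algebraic observation is that replacing the parameter set $\boldsymbol{\Lambda}$ by $\boldsymbol{-\Lambda}$ in Theorem~\ref{conv2} flips the sign in the exponential factor, since $e^{-c,-e}_{-\lambda^2}(\boldsymbol{\omega}) = e^{c,e}_{\lambda^2}(\boldsymbol{\omega})$, which produces exactly the factor appearing in the identity to be proved.

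Concretely, I would set $F := \mathcal{Q}_{\boldsymbol{\Lambda'}}f$ and $G := \mathcal{Q}_{\boldsymbol{\Lambda'}}g$. The hypotheses place $F, G$ in $L^1(\mathbb{R}^N)$, and Corollary~\ref{nm} places them in $L^2(\mathbb{R}^N)$ as well. The $L^2$-inversion theorem then yields $\mathcal{Q}_{\boldsymbol{-\Lambda'}}F = f$ and $\mathcal{Q}_{\boldsymbol{-\Lambda'}}G = g$. Applying Theorem~\ref{conv2} to the pair $(F, G)$ with the parameter set $\boldsymbol{-\Lambda}$ (so that $\boldsymbol{\Lambda'}$ is correspondingly replaced by $\boldsymbol{-\Lambda'}$) gives
\begin{eqnarray*}
\mathcal{Q}_{\boldsymbol{-\Lambda'}}\bigl(F \otimes_{\boldsymbol{-\Lambda},\lambda} G\bigr)(\boldsymbol{\omega})
&=& e^{c,e}_{\lambda^2}(\boldsymbol{\omega})\, \mathcal{Q}_{\boldsymbol{-\Lambda'}}(G)(\boldsymbol{\omega})\, \mathcal{Q}_{\boldsymbol{-\Lambda'}}(F)(\boldsymbol{\omega}) \\
&=& e^{c,e}_{\lambda^2}(\boldsymbol{\omega})\, f(\boldsymbol{\omega})\, g(\boldsymbol{\omega}).
\end{eqnarray*}
Applying $\mathcal{Q}_{\boldsymbol{\Lambda'}}$ to both sides and invoking inversion once more delivers the desired identity $(\mathcal{Q}_{\boldsymbol{\Lambda'}}f) \otimes_{\boldsymbol{-\Lambda},\lambda} (\mathcal{Q}_{\boldsymbol{\Lambda'}}g) = \mathcal{Q}_{\boldsymbol{\Lambda'}}\bigl(e^{c,e}_{\lambda^2}\, fg\bigr)$.

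The main obstacle is technical rather than conceptual: Theorem~\ref{conv2} as stated requires both the input functions and their QPFTs to lie in $L^1(\mathbb{R}^N)$. Applied in the dual direction to $(F, G)$ with parameters $\boldsymbol{-\Lambda}$, one needs $\mathcal{Q}_{\boldsymbol{-\Lambda'}}F = f$ and $\mathcal{Q}_{\boldsymbol{-\Lambda'}}G = g$ in $L^1(\mathbb{R}^N)$, whereas the hypothesis only places $f, g$ in $L^2(\mathbb{R}^N)$. I would circumvent this by first establishing the identity under the strengthened assumption $f, g \in L^1(\mathbb{R}^N) \cap L^2(\mathbb{R}^N)$, where all four functions are in $L^1$ and Theorem~\ref{conv2} applies directly, and then extending to general $f, g \in L^2(\mathbb{R}^N)$ by density together with the $L^2$-continuity of $\mathcal{Q}_{\boldsymbol{\Lambda'}}$ guaranteed by Corollary~\ref{nm}. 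One also has to take a moment to verify the sign bookkeeping for the exponent prefactor $e^{-c,-e}_{-\lambda^2}=e^{c,e}_{\lambda^2}$ and for the constant $C_{\boldsymbol{-\Lambda}}$, but these are routine consequences of the definition~\eqref{ep}.
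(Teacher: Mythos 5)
Your proposal is correct and follows essentially the same route as the paper: apply Theorem~\ref{conv2} with the negated parameter set to the pair $(\mathcal{Q}_{\boldsymbol{\Lambda'}}f,\mathcal{Q}_{\boldsymbol{\Lambda'}}g)$, identify $\mathcal{Q}_{-\boldsymbol{\Lambda'}}(\mathcal{Q}_{\boldsymbol{\Lambda'}}f)=f$ and $\mathcal{Q}_{-\boldsymbol{\Lambda'}}(\mathcal{Q}_{\boldsymbol{\Lambda'}}g)=g$ via the inversion theorem, and then apply $\mathcal{Q}_{\boldsymbol{\Lambda'}}$ to both sides. Your added attention to the $L^1$ hypothesis of Theorem~\ref{conv2} and the proposed density extension is in fact more careful than the paper's own two-line argument, which applies the convolution theorem without verifying those integrability conditions.
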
	
	\begin{proof}
		Invoking Theorem \ref{conv2}, we get
		\begin{eqnarray*}
			\mathcal{Q}_{-\boldsymbol{\Lambda'}}\big[(\mathcal{Q}_{\boldsymbol{\Lambda'}} f) \otimes_{-\boldsymbol{\Lambda'},\lambda} (\mathcal{Q}_{\boldsymbol{\Lambda'}} g)\big] &=& e^{c, e}_{\lambda^2} \mathcal{Q}_{-\boldsymbol{\Lambda'}}\big[(\mathcal{Q}_{\boldsymbol{\Lambda'}} f)\big] \mathcal{Q}_{-\boldsymbol{\Lambda'}}\big[(\mathcal{Q}_{\boldsymbol{\Lambda'}} g)\big]
			\\&=& e^{c, e}_{\lambda^2} fg.
		\end{eqnarray*}
		Applying multidimensional QPFT $\mathcal{Q}_{\boldsymbol{\Lambda'}}$ on both sides of the last equation, we get
		\begin{eqnarray*}
			(\mathcal{Q}_{\boldsymbol{\Lambda'}} f) \otimes_{-\boldsymbol{\Lambda'},\lambda} (\mathcal{Q}_{\boldsymbol{\Lambda'}} g) &=& \mathcal{Q}_{\boldsymbol{\Lambda'}} \big(e^{c, e}_{\lambda^2} fg\big).
		\end{eqnarray*}
		Therefore, the statement holds.
	\end{proof}

	\section{Boas theorem for multidimensional QPFT}
	In this Section, we discuss analogue of Boas' theorem for multidimensional QPFT, for which we first prove the following lemma.
	\begin{lemma}\label{lem2}
		Let $f \in L^1 (\mathbb{R}^N)$ be an infinitely differentiable function and $\mathcal{Q}_{\boldsymbol{\Lambda}} f$ be its N-dimensional quadratic phase Fourier transform. Then 
		\begin{eqnarray}\label{4.1}
			(\mathcal{Q}_{\boldsymbol{\Lambda}} \Delta_{\boldsymbol{x}} ^n f)(\boldsymbol{\omega}) &=& (ib_1 \omega_1)^n (ib_2 \omega_2)^n . . . (ib_N \omega_N)^n (\mathcal{Q}_{\boldsymbol{\Lambda}} f)(\boldsymbol{\omega})
			\\&=& \prod_{k=1}^{N} (ib_k \omega_k)^n (\mathcal{Q}_{\boldsymbol{\Lambda}} f)(\boldsymbol{\omega}),
		\end{eqnarray}
		where
		\begin{eqnarray}
			\Delta_{\boldsymbol{x}} &=& (-1)^N\prod_{k=1}^{N} \Big( \frac{\partial{}}{\partial{x_k}} + i(2a_k x_k +d_k) \Big) = (-1)^N \prod_{k=1}^{N} \Delta_{x_k}
		\end{eqnarray}
		and
		\begin{eqnarray}
			\Delta_{x_k}^n=(-1)^n \sum_{m=0}^{n} P_m (x_k) D^{n-m}_{x_k}, 
		\end{eqnarray}
		where
		\begin{eqnarray*}
			P_m(x_k)=\binom n{x_k} (i)^m \sum_{r=0}^{m} \binom mr (2ax_k)^r d^{m-r}. 
		\end{eqnarray*}
		Here $P_m$ denotes the $m^{th}$ order polynomial with $P_0 (x_k)=1$.
	\end{lemma}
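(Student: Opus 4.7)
The plan is to exploit an intertwining identity that conjugates the first-order operator $\Delta_{x_k} = -\bigl(\partial_{x_k} + i(2a_k x_k + d_k)\bigr)$ into an ordinary partial derivative, and then invoke the standard Fourier-derivative formula via the factorization $(\mathcal{Q}_{\boldsymbol{\Lambda}} f)(\boldsymbol{\omega}) = C_{\boldsymbol{\Lambda}}\, e_1^{\boldsymbol{c, e}}(\boldsymbol{\omega})\, \widehat{(e_1^{\boldsymbol{a, d}} f)}(\boldsymbol{b\omega})$ recorded in Section~2.

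First I would verify the elementary identity $e_1^{\boldsymbol{a, d}}(\boldsymbol{x})\,\Delta_{x_k} f(\boldsymbol{x}) = -\partial_{x_k}\!\bigl(e_1^{\boldsymbol{a, d}}(\boldsymbol{x}) f(\boldsymbol{x})\bigr)$. This is a direct application of the product rule: since $e_1^{\boldsymbol{a, d}}(\boldsymbol{x}) = \prod_{j=1}^{N} e^{i(a_j x_j^2 + d_j x_j)}$ and each factor with $j \neq k$ is independent of $x_k$, the derivative $\partial_{x_k}$ hitting the $j=k$ factor produces precisely the term $i(2a_k x_k + d_k)$ that is absorbed inside $\Delta_{x_k}$. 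Iterating in the same variable gives $e_1^{\boldsymbol{a, d}} \Delta_{x_k}^n f = (-1)^n \partial_{x_k}^n\!\bigl(e_1^{\boldsymbol{a, d}} f\bigr)$; and since $\Delta_{x_k}$ and $\Delta_{x_j}$ commute for $k \neq j$ (they act on different variables and each commutes with every other phase factor), the analogous identity in all variables together yields
\begin{equation*}
e_1^{\boldsymbol{a, d}}(\boldsymbol{x})\, \Delta_{\boldsymbol{x}}^n f(\boldsymbol{x}) = \prod_{k=1}^{N} \partial_{x_k}^n\!\bigl(e_1^{\boldsymbol{a, d}} f\bigr)(\boldsymbol{x}),
\end{equation*}
after the $(-1)^{nN}$ from the $N$ iterated conjugations cancels against the $(-1)^N$ appearing in the definition of $\Delta_{\boldsymbol{x}}$ raised to the $n$-th power.

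Next I would apply the QPFT factorization to $\Delta_{\boldsymbol{x}}^n f$, substitute the identity from the previous step, and invoke the classical multidimensional Fourier derivative rule $\widehat{\partial_{x_k}^n h}(\boldsymbol{\xi}) = (i\xi_k)^n \hat{h}(\boldsymbol{\xi})$ iteratively in each of the $N$ coordinates. Evaluating at $\boldsymbol{\xi} = \boldsymbol{b\omega}$ peels off the factor $\prod_{k=1}^{N}(ib_k \omega_k)^n$, and the remaining piece $C_{\boldsymbol{\Lambda}}\, e_1^{\boldsymbol{c, e}}(\boldsymbol{\omega})\, \widehat{(e_1^{\boldsymbol{a, d}} f)}(\boldsymbol{b\omega})$ reassembles into $(\mathcal{Q}_{\boldsymbol{\Lambda}} f)(\boldsymbol{\omega})$, giving exactly \eqref{4.1}.

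The main technical obstacle is justifying the iterated integration by parts underlying the Fourier derivative rule: the boundary terms at infinity in every coordinate must vanish. For $f$ infinitely differentiable with $f \in L^1(\mathbb{R}^N)$, one needs enough decay on $f$ and its partial derivatives up to order $n$ to ensure that $e_1^{\boldsymbol{a, d}} f$ together with all its mixed derivatives $\prod_{k} \partial_{x_k}^{n}(e_1^{\boldsymbol{a, d}} f)$ lies in $L^1(\mathbb{R}^N)$. The stated Leibniz-type expansion $\Delta_{x_k}^n = (-1)^n \sum_{m=0}^{n} P_m(x_k)\, D_{x_k}^{n-m}$ is useful precisely here, as it makes transparent which mixed polynomial-times-derivative norms of $f$ must be finite; this amounts to the implicit hypothesis that $\Delta_{\boldsymbol{x}}^n f \in L^1(\mathbb{R}^N)$, under which the entire calculation is rigorous.
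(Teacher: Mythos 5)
Your argument is correct, and it reaches the identity by a genuinely different route from the paper. The paper introduces the formal adjoint $\Delta_{x_k}^{*}=\partial_{x_k}-i(2a_kx_k+d_k)$, observes that the QPFT kernel is an eigenfunction, $(\Delta_{x_k}^{*})\mathcal{K}_{\Lambda_k}(\omega_k,x_k)=(ib_k\omega_k)\mathcal{K}_{\Lambda_k}(\omega_k,x_k)$, and then integrates by parts $n$ times in each coordinate to move $(\Delta_{x_k}^{*})^{n}$ off the kernel and onto $f$ as $\Delta_{x_k}^{n}$, iterating over $k=1,\dots,N$. You instead conjugate $\Delta_{x_k}$ by the chirp $e_1^{\boldsymbol{a,d}}$ to turn it into an ordinary partial derivative and then quote the classical Fourier derivative rule through the factorization $(\mathcal{Q}_{\boldsymbol{\Lambda}}f)(\boldsymbol{\omega})=C_{\boldsymbol{\Lambda}}\,e_1^{\boldsymbol{c,e}}(\boldsymbol{\omega})\,\widehat{(e_1^{\boldsymbol{a,d}}f)}(\boldsymbol{b\omega})$. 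The two computations are of course the same integration by parts in disguise, but your version buys transparency: it isolates the chirp conjugation as the only new ingredient and delegates the rest to a known FT fact, whereas the paper's version is self-contained at the level of the QPFT kernel and generalizes more directly to kernels that do not factor through the FT. Your sign bookkeeping is also consistent: taking $\Delta_{x_k}=-(\partial_{x_k}+i(2a_kx_k+d_k))$, as forced by the paper's expansion $\Delta_{x_k}^{n}=(-1)^{n}\sum_{m}P_m D^{n-m}_{x_k}$, the $(-1)^{nN}$ from the conjugations indeed cancels the $(-1)^{nN}$ hidden in $\Delta_{\boldsymbol{x}}^{n}$; the paper's displayed definition of $\Delta_{\boldsymbol{x}}$ is ambiguous on this point and your reading is the one that makes the lemma true. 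Finally, you are right that the vanishing of boundary terms (equivalently, sufficient decay of $f$ and its derivatives so that $\Delta_{\boldsymbol{x}}^{n}f\in L^1(\mathbb{R}^N)$) is an implicit hypothesis; the paper performs the formal integration by parts without comment, so your explicit flagging of this is a point in your favour rather than a gap.
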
	
	
	\begin{proof}
		Let $\Delta_{x_k}^* = \frac{\partial{}}{\partial{x_k}} - i(2a_kx_k +d_k)$.
		Then the kernel of the N-dimensional QPFT satisfies the following
		\begin{eqnarray}
			(\Delta_{x_k}^*)\mathcal{K}_{\Lambda_k} f(\omega_k,x_k)= (ib_k\omega_k) \mathcal{K}_{\Lambda_k} (\omega_k,x_k). \label{4.4}
		\end{eqnarray}
		For $n \in \mathbb{N}$,
		\begin{eqnarray*}
			(\Delta_{x_k}^*)^n\mathcal{K}_{\Lambda_k}(\omega_k,x_k)&=& (ib_k\omega_k)^n \mathcal{K}_{\Lambda_k}(\omega_k,x_k). \\
			\int_{\mathbb{R}}(\Delta_{x_k}^*)^n\mathcal{K}_{\Lambda_k}(\omega_k,x_k) f(x_k)dx_k &=& \int_{\mathbb{R}}\mathcal{K}_{\Lambda_k}(\omega_k,x_k)(\Delta_{x_k})^n f(x_k)dx_k.
		\end{eqnarray*}
		\begin{eqnarray}
			(\Delta_{x_k}^*)^n\mathcal{K}_{\boldsymbol{\Lambda}}(\boldsymbol{\omega},\boldsymbol{x})&=& (ib_k\omega_k)^n \mathcal{K}_{\boldsymbol{\Lambda}}(\boldsymbol{\omega}, \boldsymbol{x}).\label{4.5} \\
			\int_{\mathbb{R}^N}(\Delta_{x_k}^*)^n\mathcal{K}_{\boldsymbol{\Lambda}}(\boldsymbol{\omega}, \boldsymbol{x}) f(\boldsymbol{x})d\boldsymbol{x} &=& \int_{\mathbb{R}^N}\mathcal{K}_{\boldsymbol{\Lambda}}(\boldsymbol{\omega}, \boldsymbol{x})(\Delta_{x_k})^n f(\boldsymbol{x})d\boldsymbol{x}.\label{4.6}
		\end{eqnarray}
		Using \eqref{4.4},\eqref{4.5} and \eqref{4.6}, we get 
		\begin{eqnarray*}
			(\mathcal{Q}_{\boldsymbol{\Lambda}} \Delta_{x_k} ^n f)(\boldsymbol{\omega}) &=&
			\int_{\mathbb{R}^N} \mathcal{K}_{\boldsymbol{\Lambda}}(\boldsymbol{\omega}, \boldsymbol{x}) \Delta_{x_k}^n f(\boldsymbol{x}) d\boldsymbol{x} \\
			&=& \int_{\mathbb{R}^N}[(\Delta_{x_k}^*)^n \mathcal{K}_{\boldsymbol{\Lambda}}(\boldsymbol{\omega}, \boldsymbol{x})] f(\boldsymbol{x}) d\boldsymbol{x} \\
			&=& \int_{\mathbb{R}^N}[(ib_k\omega_k)^n \mathcal{K}_{\boldsymbol{\Lambda}} (\boldsymbol{\omega}, \boldsymbol{x})] f(\boldsymbol{x}) d\boldsymbol{x} \\
			&=& (ib_k\omega_k)^n(\mathcal{Q}_{\boldsymbol{\Lambda}} f)(\boldsymbol{\omega}).
		\end{eqnarray*}
		\begin{eqnarray*}
			(\mathcal{Q}_{\boldsymbol{\Lambda}} \Delta_{\boldsymbol{x}} ^n f)(\boldsymbol{\omega}) &=& (\mathcal{Q}_{\boldsymbol{\Lambda}} (\Delta_{x_1} ^n f)(\Delta_{x_2} ^n f) . . . (\Delta_{x_k} ^n) f)(\boldsymbol{\omega})\\
			&=& (ib_1\omega_1)^n(\mathcal{Q}_{\boldsymbol{\Lambda}} (\Delta_{x_2} ^n f) . . . (\Delta_{x_N} ^n) f)(\boldsymbol{\omega})\\
			&=& (ib_1\omega_1)^n (ib_2\omega_2)^n(\mathcal{Q}_{\boldsymbol{\Lambda}} (\Delta_{x_3} ^n f) . . . (\Delta_{x_N} ^n) f)(\boldsymbol{\omega})\\
			&=& \prod_{k=1}^{N} (ib_k \omega_k)^n (\mathcal{Q}_{\boldsymbol{\Lambda}} f)(\boldsymbol{\omega}).
		\end{eqnarray*}
		Hence we obtain the required result.
	\end{proof}	
	\begin{theorem} (Boas theorem for multidimensional QPFT)
		Let the function $f \in L^1(\mathbb{R}^N)$. Then the QPFT of $f$ vanishes in the neighborhood of the origin if and only if $B^n f$ is well-defined and belongs to $L^1(\mathbb{R}^N)$, $\forall n \in \mathbb{Z}_+$ and 
		\begin{eqnarray}
			\lim_{n \to \infty} ||B^nf||^\frac{1}{n}= R < \infty,
		\end{eqnarray}
		where
		\begin{eqnarray*}
			(Bf)(\boldsymbol{x})&=&e^{-\sum_{k=1}^{N}i(ax_k^2+dx_k)}\int_{x_1}^{\infty} \int_{x_2}^{\infty}. . . \int_{x_N}^{\infty} e^{\sum_{k=1}^{N}i(at_k^2+dt_k)}f(\boldsymbol{t})d\boldsymbol{t},
		\end{eqnarray*}
		$R=\gamma^{-1} \hspace{2mm} and \hspace{2mm} \gamma_k = \inf \{{|b_k \omega_k|: \boldsymbol{\omega} = (\omega_1, \omega_2, . . ., \omega_N)\in\sup\mathcal{Q}^{a, b, c}_{d, e} f}\}$, $\gamma = \prod_{k=1}^{N} \gamma_k$. 
	\end{theorem}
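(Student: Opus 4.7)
The plan is to show that the operator $B$ is the inverse of the differential operator $\Delta_{\boldsymbol{x}}$ from Lemma~\ref{lem2}, which under $\mathcal{Q}_{\boldsymbol{\Lambda}}$ corresponds to division by $\prod_{k=1}^{N}(ib_k\omega_k)$; the theorem then reduces to a Cauchy--Hadamard argument on this multiplier. Concretely, I would first verify by the product rule that the one-dimensional primitives $B_k g(x_k) = e^{-i(a_k x_k^2+d_k x_k)} \int_{x_k}^{\infty} e^{i(a_k t^2+d_k t)} g(t)\,dt$ satisfy $(\partial_{x_k}+i(2a_k x_k+d_k))B_k g = -g$, so $\Delta_{x_k} B_k = I$ in the notation of Lemma~\ref{lem2}. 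Since $B$ is the iterated composition of the $B_k$ while $\Delta_{\boldsymbol{x}}=(-1)^{N}\prod_k\Delta_{x_k}$ carries the compensating sign, this gives $\Delta_{\boldsymbol{x}} B = I$. Applying $\mathcal{Q}_{\boldsymbol{\Lambda}}$ together with Lemma~\ref{lem2} for $n=1$, followed by induction, yields the key intertwining identity
\begin{eqnarray*}
\mathcal{Q}_{\boldsymbol{\Lambda}}(B^{n}f)(\boldsymbol{\omega}) \;=\; \prod_{k=1}^{N}\frac{1}{(ib_{k}\omega_{k})^{n}}\,(\mathcal{Q}_{\boldsymbol{\Lambda}} f)(\boldsymbol{\omega}),
\end{eqnarray*}
on which everything else depends.

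For necessity, assume $\mathcal{Q}_{\boldsymbol{\Lambda}} f$ vanishes on an open neighborhood of $\boldsymbol{0}$, so each $\gamma_k>0$ and $\prod_k|b_k\omega_k|^n\geq\gamma^n$ on $\mathrm{supp}\,\mathcal{Q}_{\boldsymbol{\Lambda}} f$. The intertwining identity yields $|\mathcal{Q}_{\boldsymbol{\Lambda}}(B^n f)|\leq\gamma^{-n}|\mathcal{Q}_{\boldsymbol{\Lambda}} f|$ pointwise, and combining this with the inversion theorem, Corollary~\ref{nm}, and a H\"older estimate on the support of $\mathcal{Q}_{\boldsymbol{\Lambda}} f$ produces a bound $\|B^n f\|_1 \leq C\gamma^{-n}$ with $C$ independent of $n$, so $\limsup_n \|B^n f\|_1^{1/n}\leq\gamma^{-1}=R$. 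For the matching lower bound I would choose a sequence $\boldsymbol{\omega}^{(j)}\in\mathrm{supp}\,\mathcal{Q}_{\boldsymbol{\Lambda}} f$ with $\prod_k|b_k\omega_k^{(j)}|\to\gamma$ and localize $\mathcal{Q}_{\boldsymbol{\Lambda}}(B^n f)$ in small balls around such frequencies, transferring the pointwise information back to an $L^2$ lower bound via Corollary~\ref{nm} and then to $\|B^n f\|_1$ through an $L^{1}$--$L^{2}$ interpolation; this delivers $\liminf_n\|B^n f\|_1^{1/n}\geq R$.

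For sufficiency, assume $B^n f\in L^1(\mathbb{R}^N)$ for every $n\in\mathbb{Z}_+$ and $\lim_n\|B^n f\|_1^{1/n}=R<\infty$. Solving the intertwining identity for $\mathcal{Q}_{\boldsymbol{\Lambda}} f$ and bounding the QPFT of an $L^1$ function pointwise by its $L^1$ norm, one obtains
\begin{eqnarray*}
|\mathcal{Q}_{\boldsymbol{\Lambda}} f(\boldsymbol{\omega})| \;\leq\; \frac{|C_{\boldsymbol{\Lambda}}|}{(2\pi)^{N/2}}\,\Big(\prod_{k=1}^{N}|b_k\omega_k|\Big)^{n}\|B^n f\|_1,
\end{eqnarray*}
so that whenever $\prod_k|b_k\omega_k|<1/R=\gamma$, picking $\varepsilon>0$ with $(R+\varepsilon)\prod_k|b_k\omega_k|<1$ forces the right hand side to decay to zero as $n\to\infty$, giving $\mathcal{Q}_{\boldsymbol{\Lambda}} f(\boldsymbol{\omega})=0$; since the open set $\{\boldsymbol{\omega}:\prod_k|b_k\omega_k|<\gamma\}$ contains a neighborhood of the origin, the conclusion follows. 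The most delicate step, and the one I expect to consume the most work, is the matching lower bound in the necessity direction: upper bounds transfer cleanly through the QPFT intertwining, but passing from a pointwise bound on $\mathcal{Q}_{\boldsymbol{\Lambda}}(B^n f)$ to a lower bound on $\|B^n f\|_1$ requires careful localization near frequencies where $\gamma$ is nearly attained together with a Plancherel detour through Corollary~\ref{nm}.
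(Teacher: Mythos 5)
Your proposal rests on exactly the key identity the paper uses, namely
\begin{equation*}
\mathcal{Q}_{\boldsymbol{\Lambda}}(B^{n}f)(\boldsymbol{\omega})=\Big(\prod_{k=1}^{N}(ib_{k}\omega_{k})\Big)^{-n}(\mathcal{Q}_{\boldsymbol{\Lambda}}f)(\boldsymbol{\omega}),
\end{equation*}
followed by a Cauchy--Hadamard reading of the root growth of $\|B^{n}f\|$ off the support of $\mathcal{Q}_{\boldsymbol{\Lambda}}f$, so the skeleton is the same. Two organizational differences are worth noting. In the necessity direction the paper does not invoke $\Delta_{\boldsymbol{x}}B=I$ at all: it introduces truncated operators $B_{\boldsymbol{m}}$, rewrites them via Parseval (Corollary \ref{nm}) as an inner product of $\mathcal{Q}_{\boldsymbol{\Lambda}}f$ with the QPFT of a chirp-modulated indicator of the box $[x_1,m_1]\times\cdots\times[x_N,m_N]$, and lets $\boldsymbol{m}\to\infty$; your derivation through $\Delta_{x_k}B_k$ and Lemma \ref{lem2} is the one the paper itself uses in the sufficiency direction, so the two are interchangeable. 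Your sufficiency argument (direct pointwise vanishing of $\mathcal{Q}_{\boldsymbol{\Lambda}}f$ on $\{\boldsymbol{\omega}:\prod_k|b_k\omega_k|<1/R\}$) is a clean reformulation of the paper's proof by contradiction on $G_0=\{0<|\prod_k b_k\omega_k|<\epsilon\}$ and is fine.

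The genuine issue is your choice of norm. The paper, despite writing $L^1$ in the statement, runs every estimate in $L^2$ through Corollary \ref{nm}: both the upper bound $\limsup_n\|B^nf\|_2^{1/n}\le 1/\gamma$ and the lower bound over the shell $G=\{\gamma_k<|b_k\omega_k|<\gamma_k+\epsilon\}$ are then one-line consequences of Plancherel, and none of the localization or $L^1$--$L^2$ interpolation you anticipate is needed. If you insist on $\|\cdot\|_1$ as written, the step that actually breaks is not the lower bound you worried about (that one follows easily from $\|\mathcal{Q}_{\boldsymbol{\Lambda}}g\|_\infty\le\frac{|C_{\boldsymbol{\Lambda}}|}{(2\pi)^{N/2}}\|g\|_1$ applied on $G$) but the \emph{upper} bound: a pointwise bound $|\mathcal{Q}_{\boldsymbol{\Lambda}}(B^nf)|\le\gamma^{-n}|\mathcal{Q}_{\boldsymbol{\Lambda}}f|$ together with H\"older on the support of $\mathcal{Q}_{\boldsymbol{\Lambda}}f$ controls $\|\mathcal{Q}_{\boldsymbol{\Lambda}}(B^nf)\|_1$ or $\|B^nf\|_2$, but there is no general inequality bounding $\|B^nf\|_1$ by norms of its QPFT unless $B^nf$ itself lives in a fixed compact set, which it does not. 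So the claimed estimate $\|B^nf\|_1\le C\gamma^{-n}$ does not follow as sketched. The fix is simply to adopt the paper's convention and carry out both directions in $L^2(\mathbb{R}^N)$ via Corollary \ref{nm}, after which your argument goes through essentially verbatim.
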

	
	\begin{proof}(Necessary condition)
		Suppose	$\mathcal{Q}_{\boldsymbol{\Lambda}}(\boldsymbol{\omega})$ vanishes on $\big(-\frac{\gamma_1}{|b_1|},\frac{\gamma_1}{|b_1|}\big)  \times\big(-\frac{\gamma_2}{|b_2|},\frac{\gamma_2}{|b_2|}\big) \times . . . \times \big(-\frac{\gamma_N}{|b_N|},\frac{\gamma_N}{|b_N|}\big)$ such that
		\begin{eqnarray}
			(B_{\boldsymbol{m}}f)(\boldsymbol{x}) \hspace{-0.2cm} &=& \hspace{-0.2cm} e^{\sum_{k=1}^{N}-i(ax_k^2+dx_k)}\int_{x_1}^{m_1}\int_{x_2}^{m_2}. . . \int_{x_N}^{m_N} e^{\sum_{k=1}^{N}i(at_k^2+dt_k)}f(\boldsymbol{t})d\boldsymbol{t},
		\end{eqnarray}
		where $\boldsymbol{m}= (m_1, m_2, . . ., m_N)$.
		Obviously
		\begin{eqnarray}
			\lim_{\boldsymbol{m} \to \infty} (B_{\boldsymbol{m}}f)(\boldsymbol{x})&=& (Bf)(\boldsymbol{x}). 
		\end{eqnarray}
		Define $\chi_{\boldsymbol{x}}^{\boldsymbol{m}}(\boldsymbol{t}) = \prod_{k=1}^{N} \chi_{x_k}^{m_k}(t)$, where the characteristic function $\chi_{x_k}^{m_k}(t)$ is given by
		\begin{eqnarray*}
			\chi_{x_k}^{m_k}(t)=
			\begin{cases}
				1 & , \hspace{3mm} \text{if }  t \in [x_k, m_k]\\
				0 & ,\hspace{3mm} \text{elsewhere }. 
			\end{cases}
		\end{eqnarray*}
		Invoking \ref{def1} and \ref{nm}, we obtain
		\begin{eqnarray*}
			&&	(B_{\boldsymbol{m}}f)(\boldsymbol{x})
			\\&=&e^{\sum_{k=1}^{N}-i(a_kx_k^2+d_kx_k)}\int_{x_1}^{m_1}\int_{x_2}^{m_2}. . . \int_{x_N}^{m_N} e^{\sum_{k=1}^{N}i(a_kt_k^2+d_kt_k)}f(\boldsymbol{t})d\boldsymbol{t}
			\\&=& e^{\sum_{k=1}^{N}-i(a_kx_k^2+d_kx_k)}\int_{\mathbb{R}^N}  e^{\sum_{k=1}^{N}i(a_kt_k^2+d_kt_k)}f(\boldsymbol{t})\chi_{\boldsymbol{x}}^{\boldsymbol{m}}(\boldsymbol{t})d\boldsymbol{t}
			\\&=& e^{\sum_{k=1}^{N}-i(a_kx_k^2+d_kx_k)}\big< e^{\sum_{k=1}^{N}i(a_kt_k^2+d_kt_k)}f(\boldsymbol{t}), \chi_{\boldsymbol{x}}^{\boldsymbol{m}}(\boldsymbol{t}) \big>
			\\&=& e^{\sum_{k=1}^{N}-i(a_kx_k^2+d_kx_k)} \big< (\mathcal{Q}_{\boldsymbol{\Lambda}} f)(\boldsymbol{\omega}), (\mathcal{Q}_{\boldsymbol{\Lambda}} e^{\sum_{k=1}^{N}-i(a_kt_k^2+d_kt_k)} \chi_{\boldsymbol{x}}^{\boldsymbol{m}})(\boldsymbol{\omega}) \big>
			\\&=& e^{\sum_{k=1}^{N}-i(a_kx_k^2+d_kx_k)} \int_{\mathbb{R}^N} (\mathcal{Q}_{\boldsymbol{\Lambda}} f)(\boldsymbol{\omega}) \overline{\int_{x_1}^{m_1} \int_{x_2}^{m_2} . . . \int_{x_N}^{m_N} \mathcal{K}_{\boldsymbol{\Lambda}} e^{\sum_{k=1}^{N}-i(a_kt_k^2+d_kt_k)} \chi_{\boldsymbol{x}}^{\boldsymbol{m}} (\boldsymbol{t})d\boldsymbol{t}} d\boldsymbol{\omega}
			\\&=&\overline{\sqrt{\frac{\prod_{k=1}^{N}b_k}{(2\pi i)^N}}} e^{\sum_{k=1}^{N}-i(a_kx_k^2+d_kx_k)} \int_{\mathbb{R}^N}\Big[ (\mathcal{Q}_{\boldsymbol{\Lambda}}(\boldsymbol{\omega}) \\&&\times {\big( \int_{x_1}^{m_1} \int_{x_2}^{m_2} . . . \int_{x_N}^{m_N}} e^{-i\sum_{k=1}^{N}(b_kt_k\omega_k+c_k\omega_k^2+ e_k\omega_k)} d\boldsymbol{t} \big)\Big] d\boldsymbol{\omega	}
			\\&=&\overline{\sqrt{\frac{\prod_{k=1}^{N}b_k}{(2\pi i)^N}}} e^{\sum_{k=1}^{N}-i(a_kx_k^2+d_kx_k)} \\&& \times \int_{\mathbb{R}^N}\Big[ (\mathcal{Q}_{\boldsymbol{\Lambda}}f)(\boldsymbol{\omega}) e^{-i\sum_{k=1}^{N}(c_k\omega_k^2+ e_k\omega_k)}{\Big( \int_{x_1}^{m} \int_{x_2}^{m} . . . \int_{x_N}^{m}} e^{-i\sum_{k=1}^{N}(b_kt_k\omega_k)} d\boldsymbol{t} \Big)\Big] d\boldsymbol{\omega}
			\\&=&\overline{\sqrt{\frac{\prod_{k=1}^{N}b_k}{(2\pi i)^N}}} e^{\sum_{k=1}^{N}-i(a_kx_k^2+d_kx_k)} \int_{\mathbb{R}^N}  (\mathcal{Q}_{\boldsymbol{\Lambda}}f)(\boldsymbol{\omega}) e^{-i\sum_{k=1}^{N}(c_k\omega_k^2+ e_k\omega_k)} \\&& \times \bigg( \frac{e^{-i(b_1 m_1 \omega_1)}-e^{-i(b_1x_1\omega_1)}}{-ib_1\omega_1} \bigg) \bigg( \frac{e^{-i(b_2 m_2 \omega_2)} -e^{-i(b_2x_2\omega_2)}} {-ib_2\omega_2} \bigg) . .  . \bigg( \frac{e^{-i(b_N m_N \omega_N)} -e^{-i(b_Nx_N\omega_N)}} {-ib_N\omega_N} \bigg) d\boldsymbol{\omega}
			\\&=&\overline{\sqrt{\frac{\prod_{k=1}^{N}b_k}{(2\pi i)^N}}} \int_{\mathbb{R}^N} \frac{1}{\prod_{k=1}^{N}(ib_k \omega_k)} (\mathcal{Q}_{\boldsymbol{\Lambda}}f)(\boldsymbol{\omega}) e^{-i\sum_{k=1}^{N}(a_kx_k^2+ b_kx_k\omega_k+ c_k\omega_k^2+d_kx_k+ e_k\omega_k)} d \boldsymbol{\omega}\\&&+ \overline{\sqrt{\frac{\prod_{k=1}^{N}b_k}{(2\pi i)^N}}} e^{\sum_{k=1}^{N}-i(a_kx_k^2+d_kx_k)} \int_{\mathbb{R}^N} \frac{1}{\prod_{k=1}^{N}(ib_k \omega_k)} (\mathcal{Q}_{\boldsymbol{\Lambda}}f)(\boldsymbol{\omega}) e^{-i\sum_{k=1}^{N}(c_k\omega_k^2+ e_k\omega_k)} 
			\\&& \big( - e^{-i (b_1 x_1 \omega_1 + b_2 m_2 \omega_2 + . . . + b_k x_k \omega_k)} + . . . + (-1)^N e^{-i(b_1 m_1 \omega_1 + b_2 m_2 \omega_2 + . . . + b_k m_k \omega_k)} \big) d\boldsymbol{\omega}.
		\end{eqnarray*}
		Thus
		\begin{eqnarray*}
			&& (B_{\boldsymbol{m}}f)(x) = \mathcal{Q}_{\boldsymbol{-\Lambda}}\bigg( \frac{1}{\prod_{k=1}^{N}(ib_k \omega_k)} (\mathcal{Q}_{\boldsymbol{\Lambda}}f)(\boldsymbol{\omega})\bigg)\\&&+ \overline{\sqrt{\frac{\prod_{k=1}^{N}b_k}{(2\pi i)^N}}} e^{\sum_{k=1}^{N}-i(a_kx_k^2+d_kx_k)} \int_{\mathbb{R}^N} \frac{1}{\prod_{k=1}^{N}(ib_k \omega_k)} (\mathcal{Q}_{\boldsymbol{\Lambda}}f)(\boldsymbol{\omega}) e^{-i\sum_{k=1}^{N}(c_k\omega_k^2+ e_k\omega_k)} 
			\\&& \big( -e^{-i(b_1 x_1 \omega_1 +b_2 m_2 \omega_2 + . . . +b_k x_k \omega_k)} + . . . + (-1)^N e^{-i(b_1 m_1 \omega_1 + b_2 m_2 \omega_2 + . . . + b_k m_k \omega_k)} \big) d\boldsymbol{\omega}.
		\end{eqnarray*}
		As $m \to \infty$, all terms except first in the above expression  $\to 0$.
		Thus, we obtain 		
		\begin{eqnarray*}
			(B f)(\boldsymbol{x}) = \lim_{\boldsymbol{m}\to \infty} (B_m f)(\boldsymbol{x})&=& \mathcal{Q}_{\boldsymbol{-\Lambda}}\bigg( \frac{1}{\prod_{k=1}^{N}(ib_k \omega_k)} (\mathcal{Q}_{\boldsymbol{\Lambda}}f)(\boldsymbol{\omega})\bigg).
		\end{eqnarray*}
		Applying QPFT on both sides gives
		\begin{eqnarray}
			\mathcal{Q}_{\boldsymbol{\Lambda}}(B f)(\boldsymbol{\omega})= \frac{1}{\prod_{k=1}^{N}(ib_k \omega_k)} (\mathcal{Q}_{\boldsymbol{\Lambda}}f)(\boldsymbol{\omega}),
		\end{eqnarray}	
		which leads to
		\begin{eqnarray*}	
			\mathcal{Q}_{\boldsymbol{\Lambda}}(B^2 f)(\boldsymbol{\omega})= \bigg(\frac{1}{\prod_{k=1}^{N}(ib_k \omega_k)}\bigg)^2 (\mathcal{Q}_{\boldsymbol{\Lambda}}f)(\boldsymbol{\omega}).
		\end{eqnarray*}
		Using mathematical induction, we get
		\begin{eqnarray}\label{4.14}
			\mathcal{Q}_{\boldsymbol{\Lambda}}(B^n f)(\boldsymbol{\omega})= \bigg(\frac{1}{\prod_{k=1}^{N}(ib_k \omega_k)}\bigg)^n (\mathcal{Q}_{\boldsymbol{\Lambda}}f)(\boldsymbol{\omega}).		
		\end{eqnarray}
		Combining \ref{nm} and \eqref{4.14}, we get
		\begin{eqnarray*}
			||B^n f||^2_2 &=& ||\mathcal{Q}_{\boldsymbol{\Lambda}} B^n f||^2_2\\ 
			&=& \int_{\mathbb{R}^N}|(\mathcal{Q}_{\boldsymbol{\Lambda}} B^n f)(\boldsymbol{\omega})|^2 d\boldsymbol{\omega}
			\\&=&\int_{\mathbb{R}^N}\Big|\Big(\frac{1}{\prod_{k=1}^{N}(ib_k \omega_k)}\Big)^n (\mathcal{Q}_{\boldsymbol{\Lambda}}f)(\boldsymbol{\omega})\Big|^2d\boldsymbol{\omega}.	
		\end{eqnarray*}
		Let $E= \mathbb{R}^N \setminus \prod_{k=1}^{N}\big(-\frac{\gamma_k}{|b_k|},\frac{\gamma_k}{|b_k|} \big)$. Then $\mathcal{Q}_{\boldsymbol{\Lambda}}f$ vanishes outside of $E$.
		\begin{eqnarray*}
			||B^n f||^2_2 &=& \int_{\mathbb{R}^N} \Big|\Big(\frac{1}{\prod_{k=1}^{N}(ib_k \omega_k)}\Big)^n (\mathcal{Q}_{\boldsymbol{\Lambda}}f)(\boldsymbol{\omega})\Big|^2 d\boldsymbol{\omega}
			\\&\leq&\frac{1}{\prod_{k=1}^{N}\gamma_k^{2n}}\int_{E}|(\mathcal{Q}_{\boldsymbol{\Lambda}}f)(\boldsymbol{\omega})|^2 d\boldsymbol{\omega}
			\\&=& \frac{1}{\gamma^{2n}}||\mathcal{Q}_{\boldsymbol{\Lambda}}f||^2,			
		\end{eqnarray*}
		which gives
		\begin{eqnarray}\label{4.15}
			\lim_{n\to \infty} \sup ||B^n f (x)||^{\frac{1}{n}} &\leq& \frac{1}{\gamma} .		
		\end{eqnarray}
		On the other hand, let $G=\{ \boldsymbol{\omega}\in \mathbb{R}^N: \gamma_k< |b_k\omega_k|<\gamma_k+\epsilon\}$. Then
		\begin{eqnarray}\label{4.16}
			||B^n f||^2_2 &=& \int_{\mathbb{R}^N}\Big|\Big(\frac{1}{\prod_{k=1}^{N}(ib_k \omega_k)}\Big)^n (\mathcal{Q}_{\boldsymbol{\Lambda}}f)(\boldsymbol{\omega})\Big|^2 d\boldsymbol{\omega} \nonumber
			\\&\geq& \int_{G}\Big|\Big(\frac{1}{\prod_{k=1}^{N}(ib_k \omega_k)}\Big)^n (\mathcal{Q}_{\boldsymbol{\Lambda}}f)(\boldsymbol{\omega})\Big|^2 d\boldsymbol{\omega	}\nonumber
			\\ &\geq& \prod_{k=1}^{N}(\gamma_k+\epsilon)^{-2n}\int_{G}|\big(\mathcal{Q}_{\boldsymbol{\Lambda}}f\big)(\boldsymbol{\omega})|^2d\boldsymbol{\omega}. \nonumber
		\end{eqnarray}	
		Consequently, we obtain
		\begin{eqnarray*}
			\lim_{n\to \infty} \inf||B^n f||^{\frac{1}{n}}&\geq& \prod_{k=1}^{N} (\gamma_k+\epsilon)^{-1},
		\end{eqnarray*}	
		which leads to
		\begin{eqnarray}
			\lim_{n\to \infty} \inf||B^n f||^{\frac{1}{n}}&\geq& \prod_{k=1}^{N}\frac{1}{\gamma_k} = \frac{1}{\gamma}.
		\end{eqnarray}
		Combine \ref{4.15} and \ref{4.16} to get
		\begin{eqnarray*}
			\lim_{n\to \infty} ||B^n f||^{\frac{1}{n}}&=& \frac{1}{\gamma}= R.			 
		\end{eqnarray*}
		(Sufficient condition) Suppose $B^n f$ is well-defined and belongs to  $L^2(\mathbb{R}^N)$, $\forall n$ and
		\begin{eqnarray*}
			\lim_{n\to \infty} ||B^n f||^{\frac{1}{n}}=R <\infty,
		\end{eqnarray*}
		where 
		\begin{eqnarray*}
			(Bf)(\boldsymbol{x})&=&e^{-\sum_{k=1}^{N}i(ax_k^2+dx_k)}\int_{x_1}^{\infty} \int_{x_2}^{\infty}. . . \int_{x_N}^{\infty} e^{\sum_{k=1}^{N}i(at_k^2+dt_k)}f(\boldsymbol{t})d\boldsymbol{t}.
		\end{eqnarray*}
		Suppose $\mathcal{Q}_{\boldsymbol{\Lambda}} f$ does not vanish in any neighborhood of origin.
		\\Calculating in similar way, we get
		\begin{eqnarray*}
			(\Delta_{\boldsymbol{x}})^n B^n f (\boldsymbol{x}) = f(\boldsymbol{x}).
		\end{eqnarray*}
		Applying QPFT on both sides, we obtain
		\begin{eqnarray}\label{4.17}
			\mathcal{Q}_{\boldsymbol{\Lambda}}(\Delta_{\boldsymbol{x}}^n B^n f(\boldsymbol{x}))(\boldsymbol{\omega})=(\mathcal{Q}_{\boldsymbol{\Lambda}} f)(\boldsymbol{\omega}).
		\end{eqnarray}
		Replacing $f$ by $B^n f$ in equation \eqref{4.1} gives
		\begin{eqnarray}\label{4.18}
			\mathcal{Q}_{\boldsymbol{\Lambda}}(\Delta_{\boldsymbol{x}}^n B^n f(\boldsymbol{x}))(\boldsymbol{\omega})&=& \prod_{k=1}^{N}(ib_k\omega_k)^n(\mathcal{Q}_{\boldsymbol{\Lambda}} B^n f(\boldsymbol{x}))(\boldsymbol{\omega}).
		\end{eqnarray}	
		Combining \eqref{4.17} and \eqref{4.18} leads to 
		\begin{eqnarray*}
			\prod_{k=1}^{N}(ib_k\omega_k)^n(\mathcal{Q}_{\boldsymbol{\Lambda}} B^n f(\boldsymbol{x}))(\boldsymbol{\omega}) &=&(\mathcal{Q}_{\boldsymbol{\Lambda}} f)(\boldsymbol{\omega}).
		\end{eqnarray*}	
		Hence
		\begin{eqnarray}\label{4.19}
			(\mathcal{Q}_{\boldsymbol{\Lambda}} B^n f(\boldsymbol{x}))(\boldsymbol{\omega}) &=& \frac{(\mathcal{Q}_{\boldsymbol{\Lambda}} f)(\boldsymbol{\omega})}{\prod_{k=1}^{N}(ib_k\omega_k)^n} .
		\end{eqnarray}
		Let $G_0 = \{ \boldsymbol{\omega} \in \mathbb{R}^N: 0 < |\prod_{k=1}^{N} b_k\omega_k| <\epsilon \}$. Then using \ref{nm} and \eqref{4.19}, we obtain
		\begin{eqnarray*}
			||B^n f||^2 &=& ||\mathcal{Q}_{\boldsymbol{\Lambda}} B^n f||^2\\
			&=&\int_{\mathbb{R}^N}\frac{|\mathcal{Q}_{\boldsymbol{\Lambda}} f(\boldsymbol{\omega})|^2}{|\prod_{k=1}^{N}(ib_k\omega_k)|^{2n}} d\boldsymbol{\omega}
			\\&\geq& \int_{G_0}\frac{|\mathcal{Q}_{\boldsymbol{\Lambda}} f(\boldsymbol{\omega})|^2}{|\prod_{k=1}^{N}(b_k\omega_k)|^{2n}} d\boldsymbol{\omega}
			\\&\geq& \frac{1}{\epsilon^{2n}} \int_{G_0} |\mathcal{Q}_{\boldsymbol{\Lambda}} f(\boldsymbol{\omega})|^2 d\boldsymbol{\omega},
		\end{eqnarray*}
		which gives
		\begin{eqnarray*}
			||B^n f||^{\frac{1}{n}} &\geq& \frac{1}{\epsilon} \Big( \int_{G_0} |\mathcal{Q}_{\boldsymbol{\Lambda}} f(\boldsymbol{\omega})|^2 d\boldsymbol{\boldsymbol{\omega}} \Big)^{\frac{1}{2n}}.
		\end{eqnarray*}	
		Thus, it follows that
		\begin{eqnarray*}
			\lim_{n \to \infty}||B^n f||^{\frac{1}{n}} &\geq& \frac{1}{\epsilon}.
		\end{eqnarray*}
		As $\epsilon \to 0$, we obtain
		\begin{eqnarray*}
			\lim_{n\to \infty}||B^n f||^{\frac{1}{n}} = \infty.
		\end{eqnarray*}
		This contradicts our assumption.
		Hence QPFT $(\mathcal{Q}_{\boldsymbol{\Lambda}} f)$ of $f$ vanishes in the neighborhood of origin.
	\end{proof}
	
	\section{Potential Applications}
	\subsection{Solvability of Integral equations}
	As an application of previously defined convolutions, we will now consider some classes of integral equations associated with the convolutions proposed in Theorems \ref{conv1}, \ref{conv2}. This will illustrate one of the possible applications of the above-proved theorems. Here, $\odot$ denotes any of the previously introduced convolutions in \ref{conv1}, \ref{conv2}.
	
	Consider the convolution equation
	\begin{eqnarray}\label{ineq}
		\lambda \phi(\boldsymbol{x}) + (k \odot \phi) (\boldsymbol{x}) = p(\boldsymbol{x}),
	\end{eqnarray}
	where $\lambda \in \mathbb{C}, k, p \in L^1(\mathbb{R}^N)$, and $\phi$ is to be determined there. In equation \eqref{ineq}, when the convolution $\odot$ is taken one of the possibilities \ref{conv1}, or \ref{conv2}, then let us also use $\Omega$ to be corresponding function in 
	\begin{eqnarray}\label{om}
		\{\Omega_1, \Omega_2\},
	\end{eqnarray}
	respectively. We shall now use the notation $S(\boldsymbol{\omega}):= \lambda + \Omega(\boldsymbol{\omega})(\mathcal{Q}_{\boldsymbol{\Lambda}}k)(\boldsymbol{\omega})$.
	\begin{theorem}
		Assume that $S(\boldsymbol{\omega}) \neq 0$ for every $\boldsymbol{\omega}\in \mathbb{R}^N$ and $\frac{\mathcal{Q}_{\boldsymbol{\Lambda}}k}{S} \in L^1(\mathbb{R}^N)$. Then the equation \eqref{ineq} has a solution in $L^1(\mathbb{R})$ if and only if 
		\begin{eqnarray}\label{ineq1}
			\mathcal{Q}_{\boldsymbol{\Lambda}} \big( \frac{\mathcal{Q}_{\boldsymbol{\Lambda}}k}{S} \big) \in L^1(\mathbb{R}).
		\end{eqnarray}
		Moreover, if the condition \eqref{ineq1} holds, then the solution of equation \eqref{ineq} is given in explicit form by $\phi = \mathcal{Q}_{\boldsymbol{\Lambda}}^{-1} \big( \frac{\mathcal{Q}_{\boldsymbol{\Lambda}}k}{S} \big) \in L^1(\mathbb{R}^N)$.
	\end{theorem}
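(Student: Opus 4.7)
The plan is to diagonalize the convolution equation by passing to the QPFT side, where the convolution becomes pointwise multiplication (up to the multiplier $\Omega(\boldsymbol{\omega})$ arising from Theorem \ref{conv1} or from its analogue for $\boldsymbol{\oplus}$, recorded as $\Omega_1$ or $\Omega_2$ in \eqref{om}). Concretely, I would first apply $\mathcal{Q}_{\boldsymbol{\Lambda}}$ to both sides of \eqref{ineq}, use linearity and the appropriate convolution theorem to replace $\mathcal{Q}_{\boldsymbol{\Lambda}}(k \odot \phi)$ by $\Omega(\boldsymbol{\omega})\,(\mathcal{Q}_{\boldsymbol{\Lambda}} k)(\boldsymbol{\omega})\,(\mathcal{Q}_{\boldsymbol{\Lambda}} \phi)(\boldsymbol{\omega})$, and factor out $\mathcal{Q}_{\boldsymbol{\Lambda}} \phi$ to obtain the pointwise identity
\begin{equation*}
S(\boldsymbol{\omega})\,(\mathcal{Q}_{\boldsymbol{\Lambda}} \phi)(\boldsymbol{\omega}) \;=\; (\mathcal{Q}_{\boldsymbol{\Lambda}} p)(\boldsymbol{\omega}), \qquad \boldsymbol{\omega} \in \mathbb{R}^N.
\end{equation*}
Since $S$ is nowhere zero, dividing gives the candidate formula $(\mathcal{Q}_{\boldsymbol{\Lambda}} \phi)(\boldsymbol{\omega}) = (\mathcal{Q}_{\boldsymbol{\Lambda}} p)(\boldsymbol{\omega})/S(\boldsymbol{\omega})$, and the inversion theorem (Theorem \ref{inv1}) will be the tool that lifts this identity back to an equation for $\phi$ itself.

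For sufficiency, assuming the integrability condition \eqref{ineq1} holds, I would define $\phi := \mathcal{Q}_{-\boldsymbol{\Lambda}}\bigl(\mathcal{Q}_{\boldsymbol{\Lambda}} p / S\bigr)$, then use \eqref{ineq1} together with the $L^1$-inversion theorem to conclude that $\phi \in L^1(\mathbb{R}^N)$ and that $\mathcal{Q}_{\boldsymbol{\Lambda}} \phi = \mathcal{Q}_{\boldsymbol{\Lambda}} p / S$. To finish this direction I would verify that the constructed $\phi$ actually solves \eqref{ineq} by running the algebra of the previous paragraph in reverse: multiply the identity $\mathcal{Q}_{\boldsymbol{\Lambda}}\phi = \mathcal{Q}_{\boldsymbol{\Lambda}} p / S$ through by $S$, recognize the right-hand side $\lambda\mathcal{Q}_{\boldsymbol{\Lambda}}\phi + \Omega(\boldsymbol{\omega})\mathcal{Q}_{\boldsymbol{\Lambda}} k \cdot \mathcal{Q}_{\boldsymbol{\Lambda}}\phi$ as the QPFT image of $\lambda\phi + (k \odot \phi)$ via the convolution theorem, and apply $\mathcal{Q}_{-\boldsymbol{\Lambda}}$. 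For necessity, given any $\phi \in L^1(\mathbb{R}^N)$ that solves \eqref{ineq}, the same QPFT manipulation forces $\mathcal{Q}_{\boldsymbol{\Lambda}}\phi = \mathcal{Q}_{\boldsymbol{\Lambda}} p / S$; since $\phi \in L^1$ implies $\mathcal{Q}_{\boldsymbol{\Lambda}}\phi$ is bounded and continuous, one more application of $\mathcal{Q}_{\boldsymbol{\Lambda}}$ combined with inversion recovers the required integrability \eqref{ineq1}.

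The main obstacle is the integrability bookkeeping. The QPFT of an $L^1$ function is bounded but typically not integrable, so the factored identity $\mathcal{Q}_{\boldsymbol{\Lambda}}\phi = \mathcal{Q}_{\boldsymbol{\Lambda}} p/S$ is at first only an almost-everywhere statement, and one cannot apply $\mathcal{Q}_{-\boldsymbol{\Lambda}}$ termwise without justification. This is exactly why the hypothesis $\mathcal{Q}_{\boldsymbol{\Lambda}} k/S \in L^1(\mathbb{R}^N)$ is imposed: writing $1/S = (1/\lambda)\bigl(1 - \Omega\,\mathcal{Q}_{\boldsymbol{\Lambda}} k/S\bigr)$ shows that the multiplier $1/S - 1/\lambda$ is integrable, which is what makes the manipulations on the spectral side legitimate, while condition \eqref{ineq1} is precisely the hypothesis of Theorem \ref{inv1} that turns the formal inversion into an actual $L^1$ function. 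A secondary but routine point is to treat the two choices of $\odot$ uniformly: for $\otimes$ the multiplier $\Omega_1(\boldsymbol{\omega}) = e^{-i\sum_k (c_k \omega_k^2 + e_k \omega_k)}$ yields exactly the $S$ defined in the statement, whereas for $\boldsymbol{\oplus}$ the convolution theorem introduces dilated arguments $\boldsymbol{\omega}/\sqrt{2}$, so $\Omega_2$ and the corresponding $S$ must be interpreted with that dilation incorporated; once this is done, the preceding argument goes through verbatim.
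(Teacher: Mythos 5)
Your proposal follows essentially the same route as the paper: apply $\mathcal{Q}_{\boldsymbol{\Lambda}}$ to \eqref{ineq}, invoke the convolution theorem to obtain the pointwise identity $S(\boldsymbol{\omega})(\mathcal{Q}_{\boldsymbol{\Lambda}}\phi)(\boldsymbol{\omega})=(\mathcal{Q}_{\boldsymbol{\Lambda}}p)(\boldsymbol{\omega})$, divide by the nowhere-vanishing $S$, and recover $\phi$ via the $L^1$-inversion theorem. If anything you are more careful than the paper: your observation that the multiplier identity $1/S=(1/\lambda)\bigl(1-\Omega\,\mathcal{Q}_{\boldsymbol{\Lambda}}k/S\bigr)$ is what makes the hypothesis $\mathcal{Q}_{\boldsymbol{\Lambda}}k/S\in L^1(\mathbb{R}^N)$ do real work, your implicit correction that the displayed condition and explicit solution should involve $p$ rather than $k$, and your caveat that the $\boldsymbol{\oplus}$ convolution introduces dilated arguments that break the naive pointwise factorization are all issues the paper's own proof passes over in silence.
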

	\begin{proof}
		Necessary condition:
		Suppose that the equation \eqref{ineq} has a solution $\phi \in L^1(\mathbb{R}^N)$. Applying $\mathcal{Q}_{\boldsymbol{\Lambda}}$ to both sides of \eqref{ineq}, we obtain
		\begin{eqnarray*}
			&& (\mathcal{Q}_{\boldsymbol{\Lambda}} \lambda \phi)(\boldsymbol{\omega}) + (\mathcal{Q}_{\boldsymbol{\Lambda}} (k \odot \phi)) (\boldsymbol{\omega}) = (\mathcal{Q}_{\boldsymbol{\Lambda}}p)(\boldsymbol{\omega})
			\\&& \big[ \lambda + \Omega(\boldsymbol{\omega}) (\mathcal{Q}_{\boldsymbol{\Lambda}} k)(\boldsymbol{\omega}) \big] (\mathcal{Q}_{\boldsymbol{\Lambda}}\phi) (\boldsymbol{\omega}) = (\mathcal{Q}_{\boldsymbol{\Lambda}}p)(\boldsymbol{\omega})
			\\&& S(\boldsymbol{\omega}) (\mathcal{Q}_{\boldsymbol{\Lambda}}\phi) (\boldsymbol{\omega}) = (\mathcal{Q}_{\boldsymbol{\Lambda}}p)(\boldsymbol{\omega}).
		\end{eqnarray*}
		Having in mind that $S(\boldsymbol{\omega}) \neq 0$ for all $\boldsymbol{\omega} \in \mathbb{R}$, we get $\mathcal{Q}_{\boldsymbol{\Lambda}} \phi = \frac{\mathcal{Q}_{\boldsymbol{\Lambda}}p}{S} \in L^1(\mathbb{R}^N)$. Then
		\begin{eqnarray*}
			\phi = \mathcal{Q}_{\boldsymbol{\Lambda}}^{-1} \Big(\frac{\mathcal{Q}_{\boldsymbol{\Lambda}}p}{S}\Big) \in L^1(\mathbb{R}^N).
		\end{eqnarray*}
		Sufficient condition: Let $\phi = \mathcal{Q}_{\boldsymbol{\Lambda}}^{-1} \big( \frac{\mathcal{Q}p}{S}\big) \in L^1(\mathbb{R}^N)$. By $\phi \in L^1(\mathbb{R}^N)$, we get $S(\boldsymbol{x}) (\mathcal{Q}_{\boldsymbol{\Lambda}}\phi)(\boldsymbol{x}) = (\mathcal{Q}_{\boldsymbol{\Lambda}} p)(\boldsymbol{x})$.
		Using the factorization identity of the convolution, we obtain
		\begin{eqnarray*}
			\mathcal{Q}_{\boldsymbol{\Lambda}}(\lambda \phi(\boldsymbol{x}) + (k \odot \phi) (\boldsymbol{x})) = \mathcal{Q}_{\boldsymbol{\Lambda}} (p(\boldsymbol{x})).
		\end{eqnarray*}
		By the uniqueness theorem of $\mathcal{Q}_{\boldsymbol{\Lambda}}$, we conclude that $\phi(\boldsymbol{x})$ fulfills \eqref{ineq} for almost every $\boldsymbol{x} \in \mathbb{R}^N$. Hence the proof.
	\end{proof}

	\subsection{Designing of multiplicative filters in the multidimensional QPFT domain}
	In this subsection, we discuss the applications of the new convolution and correlation theorems for the designing of multiplicative filters in the multidimensional QPFT domain
	. These filters are used in various digital signal processing to recover the received signal. In the domain where the signal or noise (or both) are maximally localized, significant performance can be obtained via filtering. Suppose $r(\boldsymbol{x})$, $f(\boldsymbol{x})$ and $n(\boldsymbol{x})$ denote the received signal, the desired signal and the noise, respectively. Then $r(\boldsymbol{x})= f(\boldsymbol{x})+n(\boldsymbol{x})$. The multidimensional QPFT components $\mathcal{Q}_{\boldsymbol{\Lambda}}f (\boldsymbol{\omega})$ and $\mathcal{Q}_{\boldsymbol{\Lambda}}n(\boldsymbol{\omega})$ have no or minimum overlapping. We can design a multiplicative filter in the multidimensional QPFT domain with the convolution and product theorems proposed in this paper. We consider the QPFT- frequency spectrum in the region $[\xi_1, \eta_1] \times , [\xi_2, \eta_2] \times . . . \times [\xi_N, \eta_N]$ of the signal $f(\boldsymbol{x})$. Let $g(\boldsymbol{x})$ be the filter impulse response. Then $\mathcal{Q}_{\boldsymbol{\Lambda}}g (\boldsymbol{\omega})$ is constant over $[\xi_1, \eta_1] \times , [\xi_2, \eta_2] \times . . . \times [\xi_N, \eta_N]$ which either vanishes or decay rapidly outside that region.
	The output signal can be expressed as
	\begin{eqnarray*}
		r_{out}(\boldsymbol{x})= (r_{in}\odot g)(\boldsymbol{x}).
	\end{eqnarray*}
	With this, we get 
	\begin{eqnarray*}
		r_{out}(\boldsymbol{x})
		&=&\mathcal{Q}_{\boldsymbol{-\Lambda}}\big[\mathcal{Q}_{\boldsymbol{\Lambda}} (r_{in}\odot g)\big]
		\\&=& \mathcal{Q}_{\boldsymbol{-\Lambda}} \big[ \Omega \mathcal{Q}_{\boldsymbol{\Lambda}} (r_{in})(\boldsymbol{\omega}) \mathcal{Q}_{\boldsymbol{\Lambda}}g(\boldsymbol{\omega}) \big](\boldsymbol{x}),
	\end{eqnarray*}
	where $\Omega$ is as in \eqref{om}, whose multidimensional QPFT has a finite support in the region $[\xi_1, \eta_1] \times , [\xi_2, \eta_2] \times . . . \times [\xi_N, \eta_N]$, so it preserves that part of the spectrum of $f(\boldsymbol{x})$ over $[\xi_1, \eta_1] \times , [\xi_2, \eta_2] \times . . . \times [\xi_N, \eta_N]$.
	\\i.e., 
	\begin{eqnarray*}
		\mathcal{Q}_{\boldsymbol{\Lambda}} [r_{out}(\boldsymbol{x})](\boldsymbol{\omega}) = \mathcal{Q}_{\boldsymbol{\Lambda}}f(\boldsymbol{\omega}), ~~~~~~~~~~~\boldsymbol{\omega} \in [\xi_1, \eta_1] \times , [\xi_2, \eta_2] \times . . . \times [\xi_N, \eta_N].
	\end{eqnarray*}
	
	\section{Conclusion}
	In this article, we have extended the QPFT to the multidimensional domain and established some key properties, including Plancherel and inversion theorems. We further explored three different convolutions and a correlation, and their corresponding theorems for multidimensional QPFT, generalizing convolutions of single variable QPFT. Furthermore, we derived Boas theorem associated with multidimensional QPFT. Besides the theoretical developments, we have also discussed the applications of the proposed convolutions in solving integral equations and multiplicative filter designing, which can be used in optics and signal processing for signal recovery. 
	
	\section*{Declaration of competing interest}
	The authors declare that they have no known competing financial interests or personal relationships that could have appeared to influence the work reported in this paper.
	\\\\
	\textbf{Note}: The manuscript has no associated data.
	
	\bibliographystyle{amsplain}

\end{document}